\newcommand{\R}{\mathbb{R}}
\newcommand{\vv}{\mathbf{v}}
\newtheorem{thm}{Theorem}[section]
\theoremstyle{plain}
\newtheorem{Proposition}[thm]{Proposition}
\newtheorem{cor}[thm]{Corollary}
\theoremstyle{definition}
\newtheorem{defn}[thm]{Definition}
\newtheorem{ex}[thm]{Example}
\newtheorem{lemma}[thm]{Lemma}
\title{A Generalized Family of Multidimensional Continued Fractions: TRIP Maps\thanks{The authors thank the National Science Foundation for their support of this research via grant DMS-0850577}}
\author{Krishna Dasaratha \thanks{K. Dasaratha-Harvard University} \and Laure Flapan\thanks{L. Flapan- Yale University} \and Thomas Garrity \thanks{T. Garrity- Williams College, tgarrity@williams.edu}
 \and Chansoo Lee\thanks{C. Lee-Williams College} \and 
Cornelia Mihaila\thanks{C. Mihaila- Wellesley College} \and Nicholas Neumann-Chun\thanks{N. Neumann-Chun- Williams College}
 \and Sarah Peluse \and Matthew Stroffregen\thanks{M. Stroffregen-University of Pittsburgh}}
\begin{document}

\maketitle

\begin{abstract}
Most well-known multidimensional continued fractions, including the M\"{o}nkemeyer map and the triangle map, are generated by repeatedly subdividing triangles. This paper constructs a family of multidimensional continued fractions by permuting the vertices of these triangles before and after each subdivision. We obtain an even larger class of multidimensional continued fractions by composing the maps in the family. These include the algorithms of Brun, Parry-Daniels and G\"{u}ting. We give criteria for when multidimensional continued fractions associate sequences to unique points, which allows us to determine when   periodicity of the corresponding multidimensional continued fraction corresponds to pairs of real numbers being cubic irrationals in the same number field.
\end{abstract}

\vskip 20pt

\section{Introduction}

In 1848, Charles Hermite \cite{HermiteC} asked Carl Jacobi for a way to represent real numbers as sequences of nonnegative integers such that a number's algebraic properties are revealed by the periodicity of its sequence. Specifically, Hermite wanted an algorithm that returns an eventually periodic sequence of integers if and only if its input is a cubic irrational. There have been many attempts to product such an algorithm. Since the continued fraction expansion of a real number is eventually periodic if and only if that number is a quadratic irrational, such attempts are known as ``multidimensional continued fractions."  Of course, continued fractions have other uses than just trying to describe quadratic irrationals, such as excellent Diophantine properties and interesting ergodic properties.  For discussion of a number of well-known multidimensional continued fractions, see \cite{SchweigerF00}, \cite{SchweigerF95}, \cite{BrentjesAJ81}. 
Most of these algorithms can be understood in terms of divisions of a triangle, so it is not far-fetched to think that there should be a way to describe them in a common language. (This is in part the goal of the work of Lagarias in \cite{Lagarias93}.)  In this paper, we develop a language by generalizing the triangle map, which was introduced in \cite{GarrityT01} and further developed in \cite{GarrityT05}, \cite{Schweiger05} and \cite{SchweigerF08}. This generalization produces a family of 216 multidimensional continued fraction algorithms. We refer to these algorithms as ``TRIP Maps," which is short for ``Triangle Permutation Maps." As we will show, these 216 TRIP Maps can be combined to produce some of the well-studied algorithms mentioned above.

We begin by introducing, in section \ref{triangle}, the motivating subdivisions of the triangle with vertices $(0,0)$, $(1,0)$, and $(1,1)$ as presented in \cite{GarrityT01},\cite{GarrityT05}. Then, in section \ref{intro216}, we construct the 216 TRIP Maps. Section \ref{examples} presents some examples of TRIP Maps and their properties with respect to periodicity. Section \ref{correspondence} expresses a number of well-known multidimensional continued fraction algorithms in terms of Combo TRIP Maps. In section \ref{periodicity}, we present results about when the periodicity of sequences generated by TRIP Maps  correspond to rationals, quadratic irrationals, or cubic irrationals. Section \ref{uniqueness} deals with the  when periodicity for a given TRIP map specifies unique points.  This is the longest and most technical part of the  paper.  In part, the argument comes down to a long case-by-case analysis of the many TRIP maps.  Section \ref{generalization} generalizes the TRIP Maps to higher dimensions by partitioning the $n$-dimensional simplex.

\section{The Triangle Division}\label{triangle}
We begin with a brief explanation of the triangle division presented in \cite{GarrityT01}. As discussed in that earlier paper, this is (one of many) generalizations of the traditional continued fraction algorithm.  Define
$$\triangle^* = \{(b_0,b_1,b_2): b_0 \geq b_1 \geq b_2 > 0 \}$$
This is a cone in the first octant of $\R^3$, but can be thought of as  a ``triangle" in $\mathbb{R}^3$.  Define the projection map $\pi: \R^3 \to \R^3$ to be
$$\pi(b_0,b_1,b_2) = \left(\frac{b_1}{b_0}, \frac{b_2}{b_0}\right)$$
Then, the image of $\triangle^*$ under $\pi$,
$$\triangle = \{ (1,x,y) : 1 \geq x \geq y > 0 \},$$
is an actual triangle in $\mathbb{R}^2$ with vertices $(0,0),$ $(1,0)$, and $(1,1)$.

Define 
$$\vv_1 = \begin{pmatrix} 1 \\ 0 \\ 0 \end{pmatrix}, 
\vv_2 = \begin{pmatrix} 1 \\ 1 \\ 0 \end{pmatrix},
\vv_3 = \begin{pmatrix} 1 \\ 1 \\ 1 \end{pmatrix}.$$
Note that $\pi$ maps $\vv_1,\vv_2,$ and $\vv_3$ to the vertices of $\triangle$. This means the matrix 
$$(\vv_1 \ \vv_2 \ \vv_3)= B=\begin{pmatrix}
1 & 1 & 1 \\ 
0 & 1 & 1 \\
0 & 0 & 1 \\
\end{pmatrix}$$\
is the change of basis matrix from the triangle coordinates to the standard basis. This will allow us to use matrices to partition $\triangle.$

Now, in order to partition $\triangle$, we consider the following two matrices:
$$A_0 = \begin{pmatrix}
0 & 0 & 1 \\ 
1 & 0 & 0 \\
0 & 1 & 1 \\
\end{pmatrix}, 
A_1 = \begin{pmatrix}
1 & 0 & 1 \\ 
0 & 1 & 0 \\
0 & 0 & 1 \\
\end{pmatrix}$$\

Note that
$$(\vv_1 \ \vv_2 \ \vv_3)A_0=(\vv_2\ \vv_3 \ \vv_1+\vv_3)$$
$$(\vv_1\   \vv_2\ \vv_3)A_1=(\vv_1\  \vv_2\  \vv_1+\vv_3)$$

So, $A_0$ and $A_1$ act on the matrix $(\vv_1\  \vv_2 \ \vv_3)$ to give a disjoint bipartition of $\triangle$:

\begin{center}
\includegraphics[scale = .6]{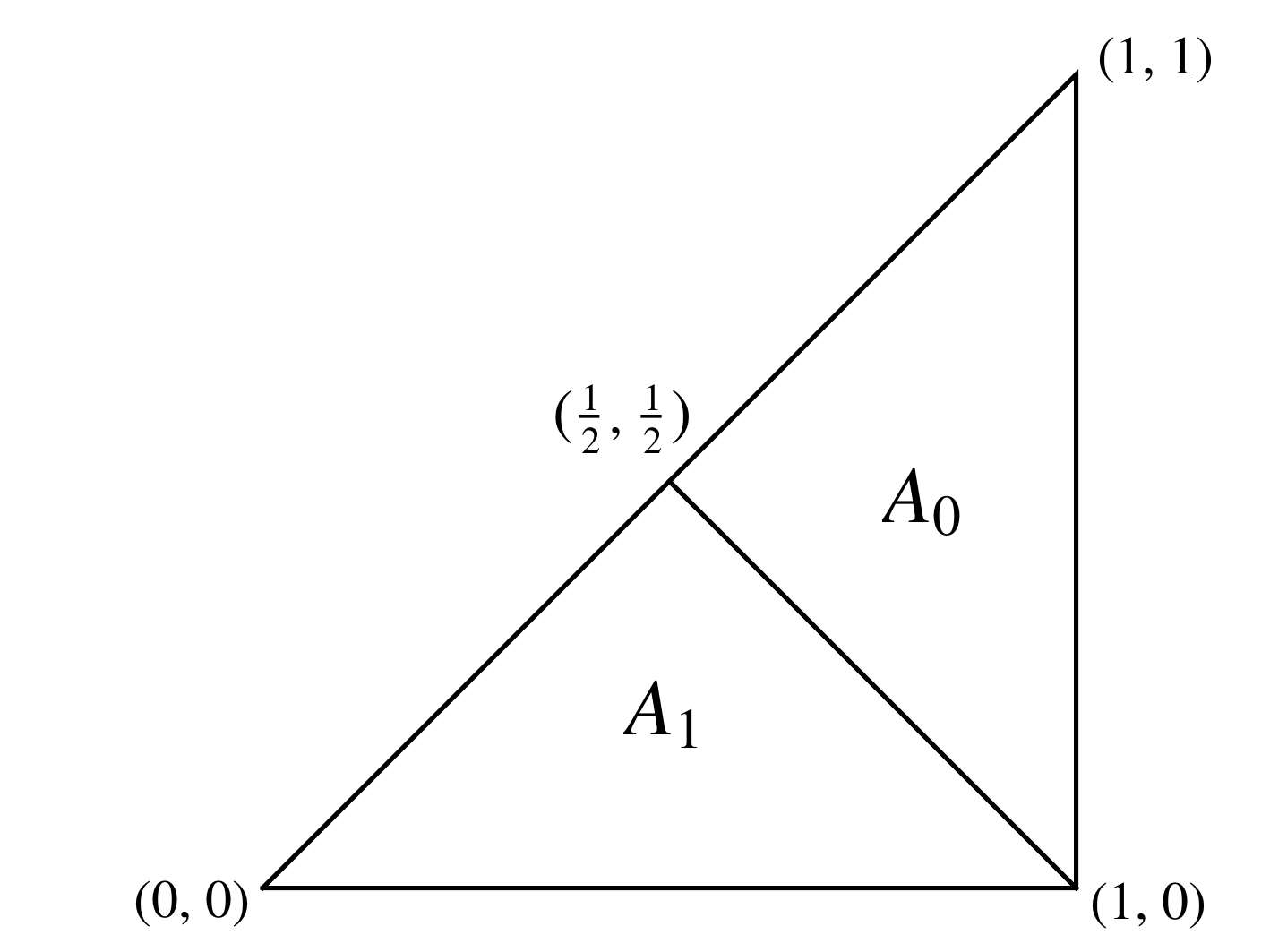}
\end{center}

This gives us the first step in the partitioning of $\triangle$.  Note that the matrices $A_0, A_1,$ and $(\vv_1 \ \vv_2 \ \vv_3)$ all have determinant 1.

Now, consider the result of applying $A_1$ a non-negative number of times, say $k$, to the original vertices of the triangle followed by applying  $A_0$ once. Then the vertices of the original triangle are mapped to the triangle 
$$\triangle_k = \{ (1,x,y) \in \triangle : 1 - x - ky \geq 0 > 1- x - (k+1)y\}$$ 
The following diagram shows the subtriangles $\triangle_0$, $\triangle_1$, and $\triangle_2.$

\begin{center}
\includegraphics[scale=.33]{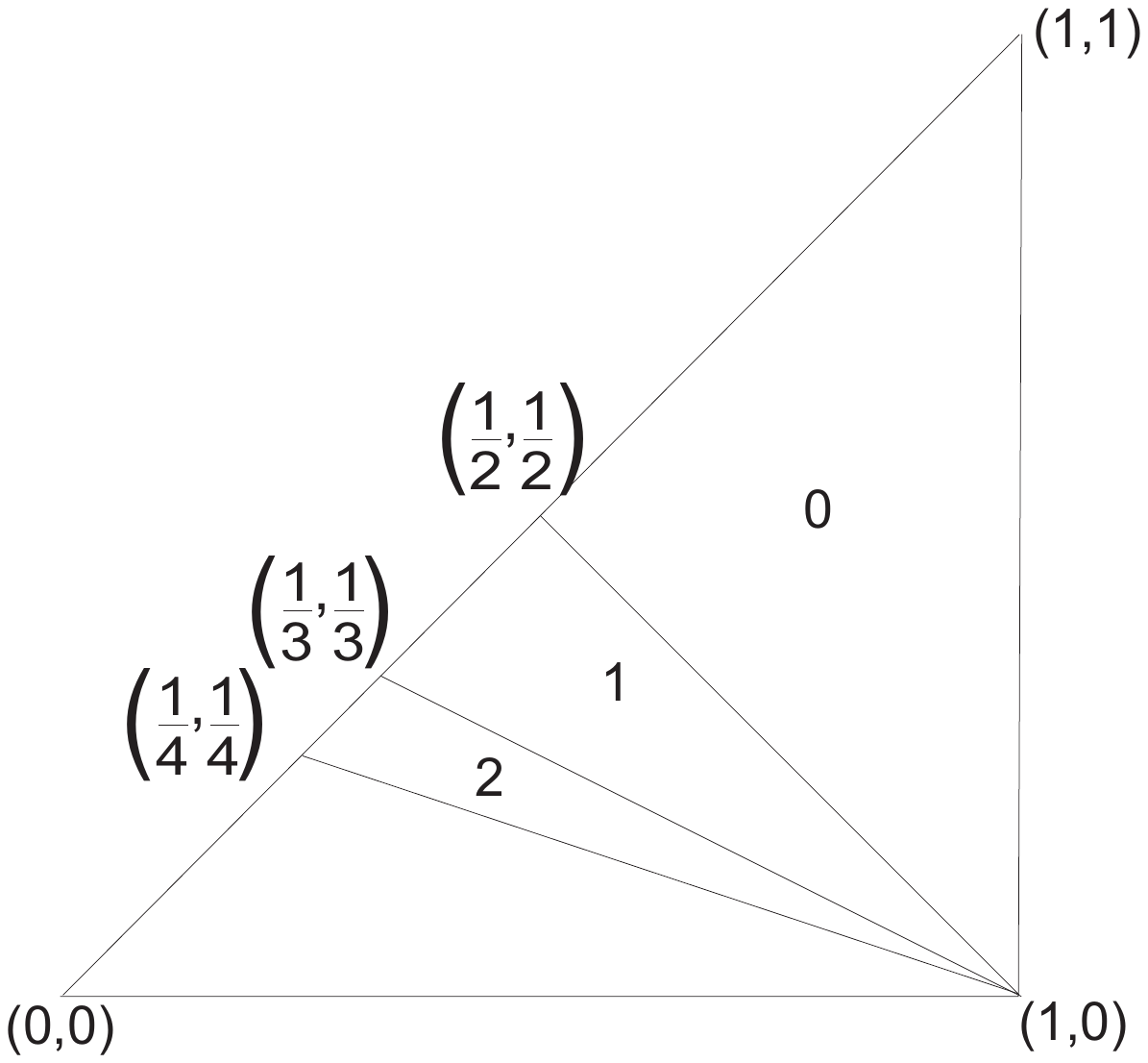}
\end{center}

We will define the triangle function $T: \triangle_k \to \triangle$ as a collection of maps, with each being a bijective map from  the subtriangle $\triangle_k$ onto $\triangle$. For any point $(x, y) \in \triangle_k$ with the standard basis, we first change the basis to triangle coordinates by multiplying by $(\vv_1 \ \vv_2 \ \vv_3)$, apply the inverse of $A_0$ and the inverse of $A_1^k$, and then finally change the basis back to the standard basis. That is,
\begin{equation}
\label{eq_tfunction}
T(x,y) = \pi( (1 \ x \ y)\left( (\vv_1 \ \vv_2 \ \vv_3)(A_0)^{-1} (A_1)^{-k} (\vv_1 \ \vv_2 \ \vv_3)^{-1}\right)^T). 
\end{equation}

This yields the following definition: $$T(x,y):=\left(\frac{y}{x},\frac{1-x-ky}{x}\right)$$ where $k=\lfloor\frac{1-x}{y}\rfloor$. This map is analogous to the Gauss map for continued fractions.  Using $T$, we can define the \emph{Triangle Sequence}, $\{a_k\}^{\infty}_{k=1}$ , of a pair in $\triangle$ by setting $a_n$ to be $k$ if $T^{(n)}(x,y)$ is in $\triangle_k$. 

\section{A Family of Multidimensional Continued Fractions}\label{intro216}
\subsection{Construction of TRIP Maps}
The triangle division, as described above, consists of partitioning the triangle with vertices $\vv_1, \vv_2,$  and $\vv_3$ into triangles with vertices $\vv_2, \vv_3,$ and $\vv_1 + \vv_3$ and $\vv_1,\vv_2,$ and $\vv_1+\vv_3$. The essential point to note is that this process assigns a particular ordering of vertices to both the vertices of the original triangle and the vertices of the two triangles produced. However, there is nothing canonical about these orderings. We can permute the vertices at several stages of the triangle division. By considering all possible permutations we generate a family of 216 maps, each corresponding to a partition of $\triangle$.

Specifically, we allow a permutation of the vertices of the initial triangle as well as a permutation of the vertices of the triangles obtained after applying $A_0$ and $A_1$. First, we permute the vertices by $\sigma \in S_3$ before applying either $A_0$ or $A_1$. Once we apply either $A_0$ or $A_1$, we then permute by either $\tau_0 \in S_3$ or $\tau_1 \in S_3$, respectively. This leads to the following definition:

\begin{defn}For every $(\sigma, \tau_0, \tau_1) \in S^3_3$, define
$$F_0 = \sigma A_0 \tau_0 \text{ and } F_1 = \sigma A_1 \tau_1$$
by thinking of $\sigma,$ $\tau_0$, and $\tau_1$ as column permutation matrices.
\end{defn}

Note that applying $F_0$ and $F_1$ partitions any triangle into two subtriangles. Thus, given some $(\sigma, \tau_0, \tau_1) \in S^3_3$, we can partition the triangle $\triangle$ using the matrices $F_0$ and $F_1$ instead of $A_0$ and $A_1$. This produces a map that is similar to, but \emph{not} the same as, the triangle map described in section \ref{triangle}. We call each of these maps a Triangle Partition Map, or ``TRIP Map" for short. Because $\left|S_3^3\right| = 216$, the family of TRIP Maps has 216 elements.

\subsection{TRIP Tree Sequence}
There are two reasonable ways to construct an integer sequence using a TRIP Map. One possibility is to keep track of which of the two subtriangles defined by $A_0$ and $A_1$ a given point is in at each step of the triangle division. We formalize this as follows.

Let $\triangle(i_0,i_1,\ldots ,i_n)$ be the triangle with vertices given by the columns of $$(\vv_1 \ \vv_2 \ \vv_3) F_{i_0}F_{i_1}F_{i_2}\cdots F_{i_n}.$$

\begin{defn}
Given $(x, y) \in \triangle$, inductively define $i_n \in \{0, 1\}$ as the unique element such that $(x, y) \in \triangle(i_0,i_1,\ldots ,i_n).$ The sequence $(i_0,i_1,\ldots )$ is the \textbf{TRIP tree sequence} of $(x, y)$ with respect to $(\sigma, \tau_0, \tau_1)$.
\end{defn}

For example, if $(\alpha, \beta)$ is in the subtriangle obtained by applying $F_0$, $F_1$, and then $F_1$ in succession, then the first three terms of the triangle tree sequence of $(\alpha,\beta)$ are $(0, 1, 1,\ldots )$.

\subsection{TRIP Sequence}

The second method is essentially the same as the definition of the triangle sequence given in the section two. To begin, recall that we had a notion of ``subtriangle $\triangle_k$" in the original triangle map. We extend this as: given $(\sigma, \tau_0, \tau_1)$, let $\triangle_k$ be the image of the triangle $\triangle$ under $F_1^kF_0$. We now define functions $T: \triangle \rightarrow \triangle$ mapping each subtriangle $\triangle_k$ bijectively to $\triangle$.

\begin{defn}
The \textbf{triangle function} $T_{\sigma, \tau_0,\tau_1}$ is given by
$$T_{\sigma, \tau_0, \tau_1}(x, y) =\pi((1, x, y) ( (\vv_1\ \ \vv_2\ \ \vv_3) F_0^{-1} F_1^{-k}  (\vv_1\ \ \vv_2\ \ \vv_3)^{-1})^T)  \text{ when } (x,y)\in\triangle_k$$
\end{defn}

(In much of the rest of the paper we will be setting $(\vv_1\ \ \vv_2\ \ \vv_3) =B$, which means we can write  
$T_{\sigma, \tau_0, \tau_1}(x, y) =\pi((1, x, y) ( B F_0^{-1} F_1^{-k}  (\vv_1\ \ \vv_2\ \ \vv_3)^{-1})^T$.)

Note that $(\vv_1\ \ \vv_2\ \ \vv_3)$ is just the change of basis matrix from the basis $\{\vv_1,\vv_2,\vv_3\}$ to the standard basis. Additionally note that we take the transpose of the matrix $ (\vv_1\ \ \vv_2\ \ \vv_3) F_0^{-1} F_1^{-k}  (\vv_1\ \ \vv_2\ \ \vv_3)^{-1}$ because we have written our matrices with vertices as columns but we are multiplying by the row vector $(1,x,y)$. 

\begin{defn}
Let $F_0$ and $F_1$ be generated from some triplet of permutations. We define $\Delta_n$ to be the triangle with vertices given by the columns of $(\vv_1\ \ \vv_2\ \ \vv_3){F_1}^n{F_0}$.
\end{defn}

We now use this to define the second method of constructing an integer sequence.

\begin{defn}
Given an $(x, y)\in\triangle$, define $a_n$ to be the non-negative integer such that $\big[T_{\sigma, \tau_0, \tau_1}\big]^n(x, y)$ is in $\triangle_{a_n}$. The \textbf{TRIP sequence} of $(x, y)$ with respect to $(\sigma, \tau_0, \tau_1)$ is $(a_0,a_1,\ldots )$.
\end{defn}

The two methods described above correspond in the the following way. The triangle tree sequence begins with the term one repeated $a_0$ times followed by a zero. The next terms are $a_1$ ones followed by a zero. The next terms are $a_2$ ones followed by a zero, and so on.

\subsection{An Even Larger Family: Combo TRIP Maps}

Finally, and most importantly, we can obtain a much larger family of functions by composing TRIP Maps. For example, we can carry out the first subdivision of $\triangle$ using $(\sigma, \tau_0, \tau_1)$, the second subdivision using $(\sigma', \tau_0', \tau_1')$, and so forth. As we will see in Section \ref{correspondence}, many known multidimensional continued fractions are such combinations of the $216$ Trip Maps. We call any  combination of our maps {\it Combo TRIP maps}.

\section{Some TRIP maps}\label{examples}
While some of the Trip Maps are quite simple to write down, the vast majority are quite complicated.  Below, we present a few of the simple maps in our family.

\begin{ex}{The Triangle Map}

The original triangle map corresponds to the Trip Map generated by three identity permutations $(e, e, e)$. Thus,
\[T_{(e,e,e)}(1,x,y)=(1, x, y) \cdot (B A_0^{-1} A_1^{-k} B^{-1})^T.\]
Recall that we can easily calculate
\[( (\vv_1\ \ \vv_2\ \ \vv_3) A_0^{-1} A_1^{-k}  (\vv_1\ \ \vv_2\ \ \vv_3)^{-1})^T = \left(
\begin{array}{ccc}
 0 & 0 & 1 \\
 1 & 0 & -1 \\
 0 & 1 & -k
\end{array}
\right)\]
(It is generally much harder, though always possible, to find a closed form for $( (\vv_1\ \ \vv_2\ \ \vv_3) F_0^{-1} F_1^{-k}$  $(\vv_1\ \ \vv_2\ \ \vv_3)^{-1})^T$.)  This yields
$$T_{(e,e,e)}(x,y)=\left(\frac{y}{x}, \frac{1 - x - ky}{x}\right),$$ when $(x,y)\in \triangle_k$. 
Note that this definition of the triangle map agrees with the definition in \cite{GarrityT01}.

Consider the point $(\frac{1}{\sqrt[3]{2}}, \frac{1}{\sqrt[3]{4}})$. We will compute the first few terms of both its triangle tree sequence and trip sequence with respect to $(e, e, e)$. 

\textbf{Triangle Tree Sequence of $(\frac{1}{\sqrt[3]{2}}, \frac{1}{\sqrt[3]{4}})$}

We begin by applying $A_0$ and $A_1$ to the matrix $(\vv_1 \ \vv_2 \ \vv_3)$ where $\vv_1$,$\vv_2$, and $\vv_3$ are the vertices of $\triangle$.  Then $A_0$ yields a triangle $\triangle(0)$ with vertices $\vv_2,\vv_3,$ and $\vv_1+\vv_3$ and $A_1$ yields a triangle $\triangle(1)$ with vertices $\vv_1,\vv_2,$ and $\vv_1+\vv_3$.  These triangles correspond to the triangles in $\mathbb{R}^2$ with vertices $(1,0), (1,1), $ and $ (\frac{1}{2},\frac{1}{2})$ and $(0,0), (1,0), $ and $ (\frac{1}{2},\frac{1}{2})$, respectively.  Since the point $(\frac{1}{\sqrt[3]{2}}, \frac{1}{\sqrt[3]{4}})$ is located inside of $\triangle(0)$, the first term of its triangle tree sequence is $0$.  

Now apply $A_0$ and $A_1$ to the matrix $(\vv_2 \ \vv_3 \ \vv_1+\vv_3)$, whose columns are the vertices of $\triangle(0)$.  Then $A_0$ yields a triangle $\triangle(0,0)$ with vertices $\vv_3,\vv_1+\vv_3,$ and $\vv_1+\vv_2+\vv_3$ and $A_1$ yields a triangle $\triangle(0,1)$ with vertices $\vv_2,\vv_3,$ and $\vv_1+\vv_2+\vv_3$.  These triangles correspond to the triangles in $\mathbb{R}^2$ with vertices $(1,1), (\frac{1}{2},\frac{1}{2}), $ and $ (\frac{2}{3},\frac{1}{3})$ and $(1,0), (1,1), $ and $ (\frac{2}{3},\frac{1}{3})$, respectively.  Since the point lies in $\triangle(0,0)$, the second term of its triangle tree sequence is $0$. 

To determine the third term in the triangle tree sequence, apply $A_0$ and $A_1$ to the matrix
\[(\vv_3 \ \vv_1+\vv_3 \ \vv_1+\vv_2+\vv_3),\]
whose columns are the vertices of $\triangle(0,0)$.  The matrix $A_0$ yields $\triangle(0,0,0)$ with vertices
\[(\vv_1+\vv_3,\vv_1+\vv_2+\vv_3,\vv_1+\vv_2+2\vv_3)\]
and $A_1$ yields $\triangle(0,0,1)$ with vertices
\[(\vv_3,\vv_1+\vv_3,\vv_1+\vv_2+2\vv_3).\]
These triangles correspond to the triangles in $\mathbb{R}^2$ with vertices $(\frac{1}{2},\frac{1}{2}), (\frac{2}{3},\frac{1}{3}), $ and $ (\frac{3}{4},\frac{2}{4})$ and $(1,1), (\frac{1}{2},\frac{1}{2}), $ and $ (\frac{3}{4},\frac{2}{4})$, respectively.  Since the point $(\frac{1}{\sqrt[3]{2}}, \frac{1}{\sqrt[3]{4}})$ lies in $\triangle(0,0,1)$, the third term of its triangle tree sequence is $1$.

Writing out the first three terms, we get that the triangle tree sequence of $(\frac{1}{\sqrt[3]{2}}, \frac{1}{\sqrt[3]{4}})$ is $(0,0,1,\ldots)$.

\textbf{TRIP Sequence of $(\frac{1}{\sqrt[3]{2}}, \frac{1}{\sqrt[3]{4}})$}

We know that $(\frac{1}{\sqrt[3]{2}}, \frac{1}{\sqrt[3]{4}})$ lies in
$$\triangle_0 = \{ (1,x,y) \in \triangle : 1 - x  \geq 0 > 1- x - y\}$$
so the first term of the TRIP sequence of $(\frac{1}{\sqrt[3]{2}}, \frac{1}{\sqrt[3]{4}})$ is $0$.

Now apply the map $T_{(e,e,e)}=\left(\frac{y}{x}, \frac{1 - x}{x}\right)$ to the point $(\frac{1}{\sqrt[3]{2}}, \frac{1}{\sqrt[3]{4}})$.
$$T_{e,e,e} \Big(\frac{1}{\sqrt[3]{2}}, \frac{1}{\sqrt[3]{4}}\Big)=\Big(\frac{1}{\sqrt[3]{2}},\sqrt[3]{2}-1\Big)$$

This point lies in $\triangle_0$, so the second term of the TRIP sequence is $0$.  

We now apply $T_{(e,e,e)}$ to the point $(\frac{1}{\sqrt[3]{2}},\sqrt[3]{2}-1)$ and we get 
$$T_{(e,e,e)}\Big(\frac{1}{\sqrt[3]{2}},\sqrt[3]{2}-1\Big)=(\sqrt[3]{4}-\sqrt[3]{2},\sqrt[3]{2}-1).$$
This point lies in $\triangle_2= \{ (1,x,y) \in \triangle : 1 - x - 2y \geq 0 > 1- x - 3y\}$, so the third term of the Trip sequence is $2$.

Writing out the first three terms, we get that the Trip sequence of $(\frac{1}{\sqrt[3]{2}}, \frac{1}{\sqrt[3]{4}})$ is $(0,0,2,\ldots)$.

\end{ex}

\begin{ex}{The M\"{o}nkemeyer Map}

We will see that the M\"{o}nkemeyer map, described in \cite{SchweigerF00} and \cite{PantiG00}, is the TRIP Map generated by the permutations $(e, (1\ 3\ 2), (2\ 3))$ (see Section \ref{correspondence}.)  It corresponds to the division of the triangle shown below.

\begin{center}
\includegraphics[scale=.5, trim = 0mm 0mm 0mm 0mm, clip]{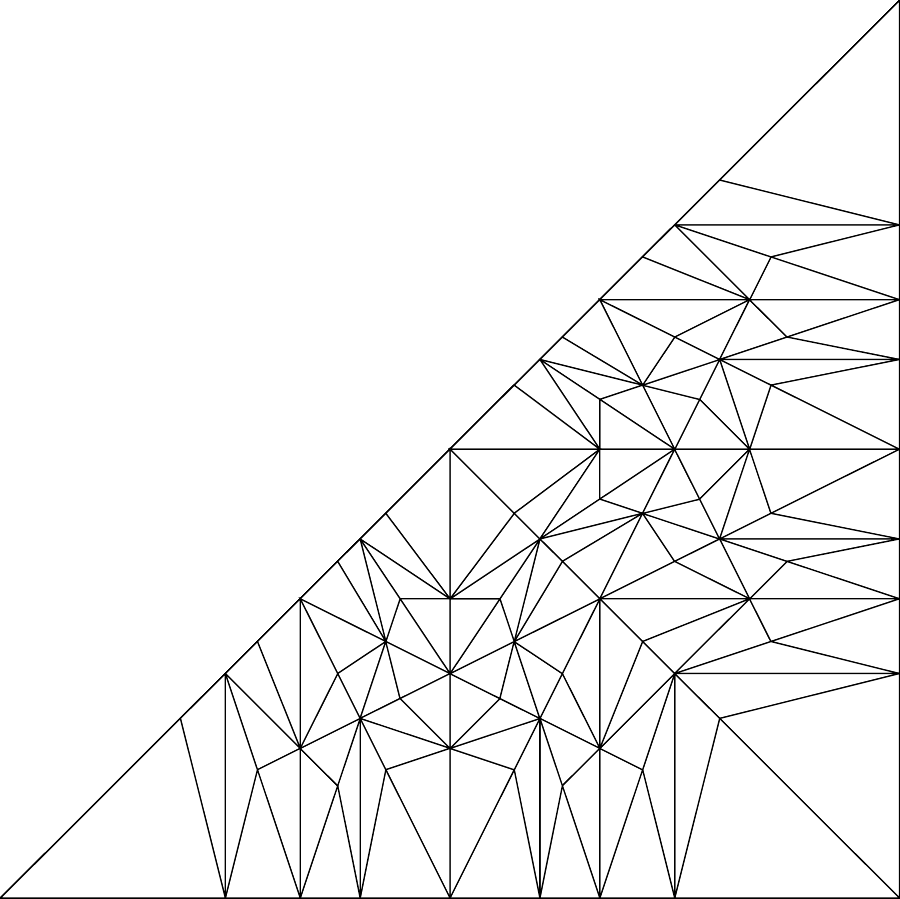}
\end{center}

The M\"{o}nkemeyer map is defined on $\triangle$ in the following manner:
\[
T(x,y)=\begin{cases}
(\frac{x}{1-y},\frac{x-y}{1-y}) & \mbox{if \ensuremath{x+y\leq1}}\\
(\frac{1-y}{x},\frac{x-y}{x}) & \mbox{if \ensuremath{x+y\geq1}}
\end{cases}
\]

The set $\{(x,y)|x+y\leq1\}\cap\triangle$ is the triangle with vertices
$(1,0),$ $(1,1),$ and $(\frac{1}{2},\frac{1}{2}),$ which will be
denoted $\triangle_{1}.$ The set $\{(x,y)|x+y\geq1\}\cap\Delta$ is
the triangle with vertices $(0,0),$ $(1,0),$ and $(\frac{1}{2},\frac{1}{2}),$
which will be denoted $\triangle_{0}.$ Consider the TRIP Map generated
by the permutations $e,$ $(1\mbox{ }3\mbox{ }2),$ and $(2\mbox{ }3).$
Note that
\begin{equation}
(B\left(\begin{array}{ccc}
1 & 0 & 0\\
0 & 1 & 0\\
0 & 0 & 1
\end{array}\right)A_{0}\left(\begin{array}{ccc}
0 & 0 & 1\\
1 & 0 & 0\\
0 & 1 & 0
\end{array}\right))^{T}=\left(\begin{array}{ccc}
0 & 1 & 0\\
0 & 1 & 1\\
1 & 2 & 1
\end{array}\right)
\end{equation}

and
\begin{equation}
(B\left(\begin{array}{ccc}
1 & 0 & 0\\
0 & 1 & 0\\
0 & 0 & 1
\end{array}\right)A_{1}\left(\begin{array}{ccc}
1 & 0 & 0\\
0 & 0 & 1\\
0 & 1 & 0
\end{array}\right))^{T}=\left(\begin{array}{ccc}
1 & 0 & 1\\
1 & 1 & 1\\
1 & 1 & 2
\end{array}\right),
\end{equation}

which is the same initial division of $\triangle$ as the M\"{o}nkemeyer
map.

On $\triangle_{0},$ we thus have
\[
((BA_{0}\left(\begin{array}{ccc}
0 & 0 & 1\\
1 & 0 & 0\\
0 & 1 & 0
\end{array}\right))^{T})^{-1}B^T=\left(\begin{array}{ccc}
0 & 1 & 0\\
1 & 0 & 1\\
0 & -1 & -1
\end{array}\right),
\]

so
\[
(x,y)\mapsto\left(\frac{1-y}{x},\frac{x-y}{x}\right),
\]

which agrees with the definition of the M\"{o}nkemeyer map above.

On $\triangle_{1},$ we thus have 

\[
((BA_{1}\left(\begin{array}{ccc}
1 & 0 & 0\\
0 & 0 & 1\\
0 & 1 & 0
\end{array}\right))^{T})^{-1}B^T=\left(\begin{array}{ccc}
1 & 0 & 0\\
0 & 1 & 1\\
-1 & 0 & -1
\end{array}\right),
\]

so
\[
(x,y)\mapsto \left(\frac{x}{1-y},\frac{1-y}{1-y}\right),
\]

which agrees with the definition of the M\"{o}nkemeyer map above. Thus,
we can conclude that the M\"{o}nkemeyer map is equivalent to the TRIP
map $T_{(e,(1\mbox{ }3\mbox{ }2),(2\mbox{ }3))}.$

\end{ex}

Both of these TRIP maps are well-known.  Our next example of a TRIP map seems to be a new multi-dimensional continued fraction, showing that our family is easily capable of producing many new multi-dimensional continued fractions.

\begin{ex}{The TRIP Map $T_{(e,e,(1\ 2))}$}

As a final example, the triangle division generated by the permutations $(e,e,(1\ 2))$ looks like
\begin{center}
\includegraphics[scale=.5]{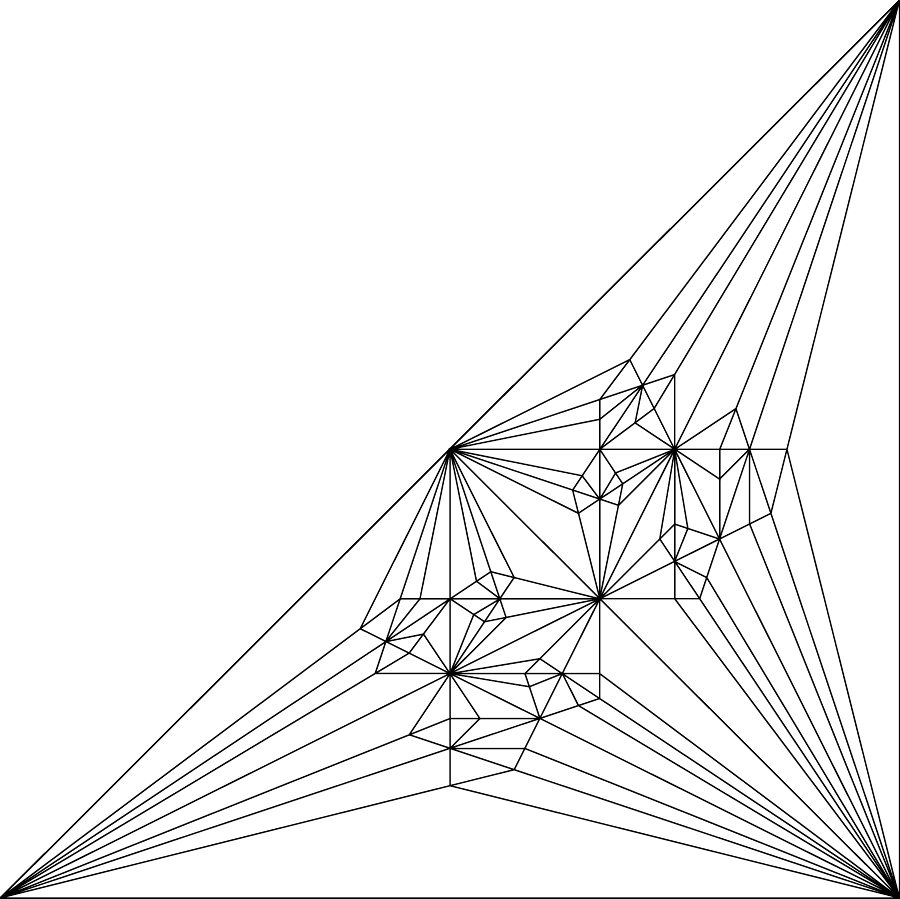}
\end{center}

The corresponding TRIP Map is 
$$T_{(e,e,(1\ 2))}=\left(\frac{(-1)^k4y}{4x-y-2+(-1)^k(2+y-2ky)},\frac{2+y-4x-(-1)^k(y-2+2ky)}{4x-y-2+(-1)^k(2+y-2ky)}\right)$$
  
  \end{ex}
  
  \section{Examples of Combo TRIP Maps} \label{correspondence}
  
  In this section, we show that several already well-known multidimensional continued fraction algorithms are Combo TRIP Maps. There are, though,  far more Combo TRIP maps than those that have already been studied.  (For background on these maps, see Schweiger \cite{SchweigerF00} .)

\begin{ex}{The Brun Algorithm}

We will show that the Brun map is  a combination of the TRIP
Map generated by the permutations $(e,e,e)$ and the TRIP Map generated by the permutations $((2\mbox{ }3),(1\mbox{ }3\mbox{ }2),(2\mbox{ }3)).$
Described in terms of TRIP Maps, the Brun algorithm first applies
$T_{(e,e,e)}$ and then applies $T_{((2\mbox{ }3),(1\mbox{ }3\mbox{ }2),(2\mbox{ }3))}$
if $(x,y)$ is not in $\triangle_{0}.$

Recall that the Brun map is defined on $\triangle$ as
\[
T(x,y)=\begin{cases}
(\frac{x}{1-x},\frac{y}{1-x}) & \mbox{if }x+y\leq1\mbox{ and }x\leq\frac{1}{2}\\
(\frac{1-x}{x},\frac{y}{x}) & \mbox{if }x+y\leq1\mbox{ and }x\geq\frac{1}{2}\\
(\frac{y}{x},\frac{1-x}{x}) & \mbox{if }x+y\geq1
\end{cases}
\]

The set $\{(x,y)|x+y\leq1\mbox{ and }x\leq\frac{1}{2}\}\cap\triangle$
is the triangle with vertices $(0,0),$ $(\frac{1}{2},0),$ and $(\frac{1}{2},\frac{1}{2}),$
which will be denoted $\triangle_{2}.$ The set $\{(x,y)|x+y\leq1\mbox{ and }x\geq\frac{1}{2}\}\cap\triangle$
is the triangle with vertices $(1,0),$ $(\frac{1}{2},0),$ and $(\frac{1}{2},\frac{1}{2}),$
which will be denoted $\triangle_{1}.$ The set $\{(x,y)|x+y\geq1\}\cap\triangle$
is the triangle with vertices $(1,0),$ $(1,1),$ and $(\frac{1}{2},\frac{1}{2}),$
which will be denoted $\triangle_{0}.$ Consider the TRIP map generated
by the permutations $(e,e,e)$ and the TRIP map generated by the permutations
$((2\mbox{ }3),(1\mbox{ }3\mbox{ }2),(2\mbox{ }3)).$ Note that 
\begin{equation}
\left[B\left(\begin{array}{ccc}
1 & 0 & 0\\
0 & 1 & 0\\
0 & 0 & 1
\end{array}\right)A_{0}\left(\begin{array}{ccc}
1 & 0 & 0\\
0 & 1 & 0\\
0 & 0 & 1
\end{array}\right)\right]^{T}=\left(\begin{array}{ccc}
1 & 1 & 0\\
1 & 1 & 1\\
2 & 1 & 1
\end{array}\right)
\end{equation}

and
\begin{equation}
\left[B\left(\begin{array}{ccc}
1 & 0 & 0\\
0 & 1 & 0\\
0 & 0 & 1
\end{array}\right)A_{1}\left(\begin{array}{ccc}
1 & 0 & 0\\
0 & 1 & 0\\
0 & 0 & 1
\end{array}\right)\right]^{T}=\left(\begin{array}{ccc}
1 & 0 & 0\\
1 & 1 & 0\\
2 & 1 & 1
\end{array}\right).
\end{equation}

Applying $T_{((2\mbox{ }3),(1\mbox{ }3\mbox{ }2),(2\mbox{ }3))}$
to the second matrix yields
\begin{equation}
\left[\left(\begin{array}{ccc}
1 & 0 & 0\\
1 & 1 & 0\\
2 & 1 & 1
\end{array}\right)^{T}\left(\begin{array}{ccc}
1 & 0 & 0\\
0 & 0 & 1\\
0 & 1 & 0
\end{array}\right)A_{0}\left(\begin{array}{ccc}
0 & 0 & 1\\
1 & 0 & 0\\
0 & 1 & 0
\end{array}\right)\right]^{T}=\left(\begin{array}{ccc}
1 & 0 & 0\\
2 & 1 & 0\\
2 & 1 & 1
\end{array}\right)
\end{equation}

and
\begin{equation}
\left[\left(\begin{array}{ccc}
1 & 0 & 0\\
1 & 1 & 0\\
2 & 1 & 1
\end{array}\right)^{T}\left(\begin{array}{ccc}
1 & 0 & 0\\
0 & 0 & 1\\
0 & 1 & 0
\end{array}\right)A_{1}\left(\begin{array}{ccc}
1 & 0 & 0\\
0 & 0 & 1\\
0 & 1 & 0
\end{array}\right)\right]^{T}=\left(\begin{array}{ccc}
1 & 1 & 0\\
2 & 1 & 0\\
2 & 1 & 1
\end{array}\right),
\end{equation}

so (1), (3), and (4) describe the vertices of $\triangle_{0},$ $\triangle_{1}$,
and $\triangle_{2},$ respectively, which is the same division as the
Brun map.

On $\triangle_{0},$ we thus have
\[
((BA_{0})^{T})^{-1}B^T=\left(\begin{array}{ccc}
0 & 0 & 1\\
1 & 0 & -1\\
0 & 1 & 0
\end{array}\right),
\]

so
\[
(x,y)\mapsto\left(\frac{y}{1-x},\frac{1-x}{1-x}\right),
\]

which agrees with the definition of the Brun map above.

On $\triangle_{1},$ we thus have
\[
\left(\left[BA_{1}\left(\begin{array}{ccc}
1 & 0 & 0\\
0 & 0 & 1\\
0 & 1 & 0
\end{array}\right)A_{0}\left(\begin{array}{ccc}
0 & 0 & 1\\
1 & 0 & 0\\
0 & 1 & 0
\end{array}\right)\right]^{T}\right)^{-1}B^T=\left(\begin{array}{ccc}
0 & 1 & 0\\
1 & -1 & 0\\
0 & 0 & 1
\end{array}\right),
\]

so
\[
(x,y)\mapsto\left(\frac{1-x}{x},\frac{y}{x}\right),
\]

which agrees with the definition of the Brun map above.

On $\triangle_{2},$ we thus have
\[
\left(\left[BA_{1}\left(\begin{array}{ccc}
1 & 0 & 0\\
0 & 0 & 1\\
0 & 1 & 0
\end{array}\right)A_{1}\left(\begin{array}{ccc}
1 & 0 & 0\\
0 & 0 & 1\\
0 & 1 & 0
\end{array}\right)\right]^{T}\right)^{-1}B^T=\left(\begin{array}{ccc}
1 & 0 & 0\\
-1 & 1 & 0\\
0 & 0 & 1
\end{array}\right),
\]

so
\[
(x,y)\mapsto\left(\frac{x}{1-x},\frac{y}{1-x}\right),
\]

which agrees with the definition of the Brun map above. Thus, we can indeed
conclude that the Brun map is the Combo TRIP
Map generated by the permutations $(e,e,e)$ and the TRIP Map generated by the permutations $((2\mbox{ }3),(1\mbox{ }3\mbox{ }2),(2\mbox{ }3)),$
where the Brun algorithm first applies
$T_{(e,e,e)}$ and then applies $T_{((2\mbox{ }3),(1\mbox{ }3\mbox{ }2),(2\mbox{ }3))}$
if $(x,y)$ is not in $\triangle_{0}.$

\end{ex}

  \begin{ex}{The Fully Subtractive Algorithm}

The fully subtractive map is defined on $\triangle$ in the following
manner:
\[
T(x,y)=\begin{cases}
(\frac{1-y}{y},\frac{x-y}{y}) & \mbox{if }y\geq\frac{1}{2}\\
(\frac{y}{1-y},\frac{x-y}{1-y}) & \mbox{if }x-2y\leq0\mbox{ and }y\leq\frac{1}{2}\\
(\frac{x-y}{1-y},\frac{y}{1-y}) & \mbox{if }x-2y\geq0\mbox{ and }y\leq\frac{1}{2}
\end{cases}.
\]

We will show that the fully subtractive map is equivalent
to the Combo TRIP  Map generated by the permutations $((1\mbox{ }2\mbox{ }3),e,e)$
and the TRIP Map generated by the permutations $((2\mbox{ }3),(1\mbox{ }3\mbox{ }2),(2\mbox{ }3)).$
Described in terms of TRIP Maps, the fully subtractive algorithm first
applies $T_{((1\mbox{ }2\mbox{ }3),e,e)}$ and then applies $T_{((2\mbox{ }3),(1\mbox{ }3\mbox{ }2),(2\mbox{ }3))}$
if $(x,y)$ is not in $\triangle_{1}.$

Now, the set $\{(x,y)|x-2y\leq0\mbox{ and }y\leq\frac{1}{2}\}\cap\triangle$
is the triangle with vertices $(0,0),$ $(1,\frac{1}{2}),$ and $(\frac{1}{2},\frac{1}{2}),$
which will be denoted $\triangle_{2}.$ The set $\{(x,y)|x-2y\geq0\mbox{ and }y\leq\frac{1}{2}\}\cap\triangle$
is the triangle with vertices $(0,0),$ $(1,\frac{1}{2}),$ and $(1,0),$
which will be denoted $\triangle_{1}.$ The set $\{(x,y)|y\geq\frac{1}{2}\}\cap\triangle$
is the triangle with vertices $(\frac{1}{2},\frac{1}{2}),$ $(1,\frac{1}{2}),$
and $(1,1),$ which will be denoted $\triangle_{0}.$ Consider the TRIP
Map generated by the permutations $((1\mbox{ }2\mbox{ }3),e,e)$ and
the TRIP Map generated by the permutations $((2\mbox{ }3),(1\mbox{ }3\mbox{ }2),(2\mbox{ }3)).$
Note that
\begin{equation}
\left[B\left(\begin{array}{ccc}
0 & 1 & 0\\
0 & 0 & 1\\
1 & 0 & 0
\end{array}\right)A_{0}\left(\begin{array}{ccc}
1 & 0 & 0\\
0 & 1 & 0\\
0 & 0 & 1
\end{array}\right)\right]^{T}=\left(\begin{array}{ccc}
1 & 0 & 0\\
1 & 1 & 0\\
2 & 2 & 1
\end{array}\right)
\end{equation}

and
\begin{equation}
\left[B\left(\begin{array}{ccc}
0 & 1 & 0\\
0 & 0 & 1\\
1 & 0 & 0
\end{array}\right)A_{1}\left(\begin{array}{ccc}
1 & 0 & 0\\
0 & 1 & 0\\
0 & 0 & 1
\end{array}\right)\right]^{T}=\left(\begin{array}{ccc}
1 & 1 & 1\\
1 & 0 & 0\\
2 & 2 & 1
\end{array}\right).
\end{equation}

Applying $T_{((2\mbox{ }3),(1\mbox{ }3\mbox{ }2),(2\mbox{ }3))}$
to the second matrix yields
\begin{equation}
\left[\left(\begin{array}{ccc}
1 & 1 & 1\\
1 & 0 & 0\\
2 & 2 & 1
\end{array}\right)^{T}\left(\begin{array}{ccc}
1 & 0 & 0\\
0 & 0 & 1\\
0 & 1 & 0
\end{array}\right)A_{0}\left(\begin{array}{ccc}
0 & 0 & 1\\
1 & 0 & 0\\
0 & 1 & 0
\end{array}\right)\right]^{T}=\left(\begin{array}{ccc}
1 & 0 & 0\\
2 & 1 & 1\\
2 & 2 & 1
\end{array}\right)
\end{equation}

and
\begin{equation}
\left[\left(\begin{array}{ccc}
1 & 1 & 1\\
1 & 0 & 0\\
2 & 2 & 1
\end{array}\right)^{T}\left(\begin{array}{ccc}
1 & 0 & 0\\
0 & 0 & 1\\
0 & 1 & 0
\end{array}\right)A_{1}\left(\begin{array}{ccc}
1 & 0 & 0\\
0 & 0 & 1\\
0 & 1 & 0
\end{array}\right)\right]^{T}=\left(\begin{array}{ccc}
1 & 1 & 1\\
2 & 1 & 1\\
2 & 2 & 1
\end{array}\right),
\end{equation}

so (5), (7), and (8) describe the vertices of $\triangle_{1},$ $\triangle_{0},$
and $\triangle_{2},$ respectively, which is the same division as the
fully subtractive map.

On $\triangle_{0},$ we thus have
\[
\left(\left[B\left(\begin{array}{ccc}
0 & 1 & 0\\
0 & 0 & 1\\
1 & 0 & 0
\end{array}\right)A_{1}\left(\begin{array}{ccc}
1 & 0 & 0\\
0 & 0 & 1\\
0 & 1 & 0
\end{array}\right)A_{0}\left(\begin{array}{ccc}
0 & 0 & 1\\
1 & 0 & 0\\
0 & 1 & 0
\end{array}\right)\right]^{T}\right)^{-1}B^T=\left(\begin{array}{ccc}
1 & 0 & 0\\
0 & 0 & 1\\
-1 & 1 & -1
\end{array}\right),
\]

so
\[
(x,y)\mapsto\left(\frac{y}{1-y},\frac{x-y}{1-y}\right),
\]

which agrees with the definition of the fully subtractive map above.

On $\triangle_{1},$ we thus have
\[
\left(\left[B\left(\begin{array}{ccc}
0 & 1 & 0\\
0 & 0 & 1\\
1 & 0 & 0
\end{array}\right)A_{0}\right]^{T}\right)^{-1}B^T=\left(\begin{array}{ccc}
1 & 0 & 0\\
0 & 1 & 0\\
-1 & -1 & 1
\end{array}\right),
\]

so
\[
(x,y)\mapsto\left(\frac{x-y}{1-y},\frac{y}{1-y}\right),
\]

which agrees with the definition of the fully subtractive map above.

On $\triangle_{2},$ we thus have
\[
\left(\left[B\left(\begin{array}{ccc}
0 & 1 & 0\\
0 & 0 & 1\\
1 & 0 & 0
\end{array}\right)A_{1}\left(\begin{array}{ccc}
1 & 0 & 0\\
0 & 0 & 1\\
0 & 1 & 0
\end{array}\right)A_{1}\left(\begin{array}{ccc}
1 & 0 & 0\\
0 & 0 & 1\\
0 & 1 & 0
\end{array}\right)\right]^{T}\right)^{-1}B^T=\left(\begin{array}{ccc}
0 & 1 & 0\\
0 & 0 & 1\\
1 & -1 & -1
\end{array}\right),
\]

so
\[
(x,y)\mapsto\left(\frac{1-y}{y},\frac{x-y}{y}\right),
\]

which agrees with the definition of the fully subtractive map above.  Hence the fully subtractive map is indeed our desired Combo TRIP map.

  \end{ex}

  \begin{ex}{The G\"uting Map with link to Jacobi-Perron }

The G\"{u}ting map is defined for a point  $(x,y)\in \triangle$ as follows.  First choose
$a$ to be the positive integer so that 
$$1-ax\geq 0 > 1-(a+1)x.$$
Then choose $b$ to be the nonnegative integer so that 
$$1-ax-by\geq 0 > 1-ax-(b+1)y.$$
The G\"{u}ting map is 
$$T(x,y)= \left(\frac{y}{x},\frac{1-ax-by}{x}  \right) .$$

This map does not map each subtriangle bijectively into
$\triangle,$ so we will not concern ourselves with the vertices of triangles
defined by the G\"{u}ting map matching up with those defined by TRIP
maps. Since $T$ maps $\triangle$ into $\triangle,$ all we really have
to do is show that there exists a combination of TRIP maps that agrees
with $T.$ Consider the TRIP Map generated by the permutations $(e,e,e)$
and the TRIP  map generated by the permutations $(e,(1\mbox{ }2\mbox{ }3),e).$

The G\"{u}ting map  can be recast as a combo TRIP map, since by calculation we have 
$$\left(\begin{array}{ccc}
0 & 0 & 1\\
1 & 0 & -a\\
0 & 1 & -b
\end{array}\right) = \left[     B A_0^{-1}  A_1^{-b}      \left(  A_1 \left(\begin{array}{ccc}
1 & 0 & 0\\
0& 0 & 1\\
0 & 1 & 0
\end{array}\right)   \right)^{-2(a-1)} B^{-1}                          \right]^T.$$

The Jacobi-Perron algorithm is connected to the G\"{u}ting map, and thus to TRIP Maps, in a way that the following theorem, found in \cite{SchweigerF00}, makes precise.

\begin{thm}
Let
\begin{equation*}
	G(x_1,x_2)=\left(\frac{x_2}{x_1},\frac{1-a_1x_1-a_2x_2}{x_1}\right)
\end{equation*}
and
\begin{equation*}
	T(x_1,x_2)=\left(\frac{x_2}{x_1}-k_1,\frac{1}{x_1}-k_2\right)
\end{equation*}
denote the maps for the G\"{u}ting algorithm and the Jacobi-Perron algorithm respectively. Then
$$(k^{(g)}_1,k^{(g)}_2)=(a^{(g-n+1)}_2,a^{(g)}_1),$$
when $a^{(t)}_j=0$ for $t\leq 0$ and $j=1,2.$
\end{thm}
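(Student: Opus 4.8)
The plan is to unwind both algorithms' recursions in parallel and match their partial quotients term by term. First I would recall (or re-derive) the standard iterative descriptions: at step $g$ the G\"uting algorithm, applied to the current point $(x_1,x_2)$, picks $a_1^{(g)}$ to be the positive integer with $1-a_1^{(g)}x_1\ge 0>1-(a_1^{(g)}+1)x_1$ and then $a_2^{(g)}$ the nonnegative integer with $1-a_1^{(g)}x_1-a_2^{(g)}x_2\ge 0>1-a_1^{(g)}x_1-(a_2^{(g)}+1)x_2$, and replaces $(x_1,x_2)$ by $G(x_1,x_2)$. The Jacobi-Perron algorithm at step $t$ picks $k_1^{(t)}=\lfloor x_2/x_1\rfloor$ and $k_2^{(t)}=\lfloor 1/x_1\rfloor$ and replaces $(x_1,x_2)$ by $T(x_1,x_2)$. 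The claim is that if one runs G\"uting on a point and Jacobi-Perron on the \emph{same} point, then the Jacobi-Perron partial quotients at step $g$ are $(k_1^{(g)},k_2^{(g)})=(a_2^{(g-n+1)},a_1^{(g)})$, with the convention $a_j^{(t)}=0$ for $t\le 0$ (here $n$ should be the offset by which a single G\"uting step corresponds to a block of Jacobi-Perron steps; I would first pin down exactly what $n$ is, as the statement as quoted is slightly telegraphic about it).

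The key structural observation is that one G\"uting step equals several Jacobi-Perron steps. Concretely, the second coordinate of $G$, namely $(1-a_1x_1-a_2x_2)/x_1$, is obtained from $1/x_1$ by subtracting the integer $a_1$ and then subtracting $a_2$ copies of $x_2/x_1$; and each ``subtract $x_2/x_1$'' together with a reindexing of coordinates is exactly what an iterate of $T$ does once the floor has been stripped off. So I would show by induction on the number of G\"uting steps that the G\"uting map $G$ factors as a composition $T^{a_1}$ (or $T^{a_2+1}$, depending on which coordinate is being reduced) of Jacobi-Perron maps, reading off at each $T$-iterate which floor is being taken: the first floor in the block is $\lfloor x_2/x_1\rfloor$-type giving a $k_1$ equal to an earlier $a_2$, and the floors $\lfloor 1/x_1\rfloor$ accumulate to give $a_1^{(g)}$ in the $k_2$ slot. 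Carefully tracking the coordinate swaps $(x_1,x_2)\mapsto(x_2/x_1-\cdot,\,1/x_1-\cdot)$ through the block is what produces the shift $g\mapsto g-n+1$ in the index of $a_2$, and the boundary convention $a_j^{(t)}=0$ for $t\le 0$ is exactly what handles the first few Jacobi-Perron steps before a full G\"uting block has been completed.

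The main obstacle, and the part I expect to require the most care, is the bookkeeping of the index shift: showing that the $a_2$ appearing in $k_1^{(g)}$ is the one from G\"uting step $g-n+1$ rather than step $g$ or $g-1$. This is a purely combinatorial matching once the factorization $G = T\circ\cdots\circ T$ is established, but it is easy to be off by one, so I would verify it against a small explicit example (say the point $(2^{-1/3},2^{-2/3})$ already used in the paper, or any cubic irrational) to fix the conventions before writing the general induction. A secondary technical point is to confirm that the integer parts extracted by Jacobi-Perron inside a G\"uting block are genuinely the claimed $a_i$'s and not smaller — i.e.\ that no extra Jacobi-Perron step sneaks in — which follows from the defining inequalities for $a_1,a_2$ (they guarantee that the relevant quantities land in $[0,1)$ exactly after the prescribed number of subtractions, so the floors behave as asserted). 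With the factorization and the index matching in hand, the theorem follows; since it is quoted from \cite{SchweigerF00}, I would present this as a sketch and refer the reader there for the full details.
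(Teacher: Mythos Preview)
The paper does not prove this theorem at all: it is quoted verbatim from Schweiger's book and attributed there, with the sentence ``the following theorem, found in \cite{SchweigerF00}, makes precise'' and no accompanying argument. So there is no proof in the paper to compare your proposal against; any correct argument you supply would already exceed what the paper provides.

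Your sketch is in the right spirit --- factoring a single G\"uting step as a block of Jacobi--Perron steps and tracking which floors get extracted --- but as written it has a genuine loose end: you say you would ``first pin down exactly what $n$ is,'' and indeed you must. In the statement $n$ is the dimension parameter (here $n=2$, so the shift is $g\mapsto g-1$), not a block length depending on $a_1$ or $a_2$; your proposal conflates this fixed dimensional offset with a variable ``number of $T$-steps per $G$-step,'' which is a different quantity. Until that is straightened out, the index-matching induction you describe cannot be carried out correctly. Your closing instinct --- present it as a sketch and cite \cite{SchweigerF00} --- is exactly what the paper itself does, so that is a perfectly acceptable resolution.
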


This theorem implies that the integer sequence for $(x,y)\in\triangle$ is eventually periodic under the Jacobi-Perron Algorithm if and only if it is periodic under the G\"uting Algorithm.

  \end{ex}

  \section{Periodicity and Cubicness} \label{periodicity}
  
  \subsection{Uniqueness and Cubicness}
  
  We now return to the Hermite problem.  In \cite{SMALL-Uniqueness}, we show 
  \begin{thm}Let $K$ be a cubic number field, $u$ be a real unit  in $\mathcal{O}_K,$ with $0<u<1$ and $E=K(\sqrt{\Delta_{\mathbb{Q}(u)}})$ (where  $\Delta_{\mathbb{Q}(u)}$ is the discriminant of $\mathbb{Q}(u)$).  Then there exists a point $(\alpha,\beta)$, with $\alpha,\beta\in E$ irrational, such that $(\alpha,\beta)$ has a periodic TRIP sequence. 
  \end{thm}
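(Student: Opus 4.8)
The plan is to reduce the statement to an ``inverse'' or realization problem: instead of starting from a pair $(\alpha,\beta)$ and asking whether its TRIP sequence is periodic, I would start from a purely periodic sequence of TRIP-tree moves and show that the resulting point has the desired field-theoretic properties. Concretely, pick a TRIP map $(\sigma,\tau_0,\tau_1)$ (or a Combo TRIP map) whose matrices $F_0,F_1$ have a product, along a suitable finite word $w = i_0 i_1 \cdots i_m$, equal to some $M \in \mathrm{SL}_3(\Z)$ whose largest eigenvalue $\lambda$ is a real unit generating the cubic field $K$. Since the TRIP-tree cylinder sets are nested triangles whose vertices are given by the columns of $(\vv_1\ \vv_2\ \vv_3) F_{i_0} F_{i_1}\cdots$, a purely periodic sequence $w^\infty$ corresponds to the point $(\alpha,\beta)=\pi(\vv)$ where $\vv$ is the Perron--Frobenius-type eigenvector of $M=BF_w B^{-1}$ (equivalently $F_w$) associated to $\lambda$, provided the triangles actually shrink to a point along $w^\infty$. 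That convergence and the requirement that $(\alpha,\beta)\in\triangle$ is what forces us to be careful about which word $w$ and which TRIP map we use.

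The key steps, in order, are: (1) Given the cubic unit $u$ with $0<u<1$, identify a matrix $M\in\mathrm{SL}_3(\Z)$ with an eigenvalue that is a unit generating $K$ and whose eigenvector projects into the open triangle $\triangle$; a natural candidate is the companion matrix of the minimal polynomial of $u^{-1}$ (or a power/associate of it), arranged so that the dominant eigenvalue is $>1$ and the eigenvector has strictly positive, strictly decreasing coordinates. (2) Factor $M$ (or $B^{-1}MB$) as a product $F_{i_0}\cdots F_{i_m}$ of the generators $F_0,F_1$ attached to one fixed triple $(\sigma,\tau_0,\tau_1)$ — or, if one triple does not suffice, as a product of the $F$'s coming from finitely many triples, i.e.\ as a Combo TRIP factorization; here one leans on the fact, visible already in Section \ref{correspondence}, that Combo TRIP maps realize a rich monoid of nonnegative integer matrices, large enough to hit (conjugates of) arbitrary companion matrices of cubic units. (3) Check that along the periodic itinerary $w^\infty$ the nested triangles $\triangle(i_0,\ldots,i_n)$ have diameters tending to $0$, so that they intersect in the single point $(\alpha,\beta)=\pi(\vv_\lambda)$; this is where the contraction/positivity of the product matrix $M$ does the work. (4) Conclude that $(\alpha,\beta)$ has eventually periodic (indeed purely periodic) TRIP-tree sequence, hence eventually periodic TRIP sequence, and that $\alpha,\beta$ are coordinates of an eigenvector of an integer matrix with cubic eigenvalue $\lambda\in K$, so $\alpha,\beta\in K(\lambda)$; comparing with the statement, $E=K(\sqrt{\Delta_{\Q(u)}})$ arises because the eigenvector entries naturally live in the splitting-type extension generated by the discriminant. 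Finally one must argue $\alpha,\beta$ are irrational, which follows once $\lambda$ is genuinely cubic (degree $3$) over $\Q$ so that the eigenvector cannot have rational coordinates.

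The main obstacle I expect is step (2): showing that \emph{every} cubic unit's companion matrix (up to conjugation by $B$ and up to replacing the unit by an associate or a power, and up to allowing Combo TRIP maps rather than a single map) admits a factorization into the generators $F_0,F_1$ of the TRIP family. This is essentially a statement that the semigroup generated by the $216\times$ (finitely many letters) nonnegative matrices is large enough — a continued-fraction/Euclidean-algorithm style claim. The natural route is: reduce to showing the \emph{ordinary} triangle map generators $A_0,A_1$ (the $(e,e,e)$ case), together with the permutation matrices realized by varying $(\sigma,\tau_0,\tau_1)$, generate all of a suitable submonoid of $\mathrm{SL}_3(\Z)_{\ge 0}$, and then invoke a known result (e.g.\ via the Jacobi--Perron or G\"uting connection established above, since Jacobi--Perron-type algorithms are known to terminate on and hence factor broad classes of positive matrices). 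The secondary obstacle is the convergence in step (3): one must rule out degenerate periodic words whose product matrix, though in $\mathrm{SL}_3(\Z)$, is not primitive (some vanishing-entry power), so that the triangles do not shrink to a point; this is handled by choosing the period word long enough that the product matrix is strictly positive, which is always possible for an irreducible aperiodic nonnegative integer matrix with a dominant cubic eigenvalue.
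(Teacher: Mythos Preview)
This theorem is not actually proved in the present paper; it is quoted from the companion work \cite{SMALL-Uniqueness}, and the only information given here is that there ``the above is a corollary to an explicit listing of five TRIP maps from which, in particular ways, the desired TRIP maps are formed.'' So the benchmark you are being compared against is a concrete construction: five specific triples $(\sigma,\tau_0,\tau_1)$ are written down and combined so that, for every cubic unit $u\in(0,1)$, one of the resulting Combo TRIP maps has a purely periodic point with coordinates in $E$. This is in the same spirit as the $(A,B)$-family result $\alpha^3+A\alpha^2+B\alpha-1=0$ proved in Section~\ref{periodicity} of this paper, only pushed far enough (with more maps) to cover every cubic unit.

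Your overall architecture is the right shape --- realize $M\in\mathrm{SL}_3(\Z)$ with a unit of $K$ as eigenvalue, factor $M$ over TRIP generators, read off the periodic point as the dominant eigenvector --- and steps (1), (3), (4) are routine once (2) is in hand. The genuine gap is step~(2), and the fixes you propose do not close it. The monoid generated by $F_0,F_1$ for a single triple is very thin inside the nonnegative matrices of $\mathrm{SL}_3(\Z)$, and even allowing all $216$ triples together with permutations you have given no argument that the companion matrix of an arbitrary cubic unit, after conjugation by $B$, lies in that monoid; ``large enough'' is exactly the claim that needs proof. The G\"uting/Jacobi--Perron correspondence of Section~\ref{correspondence} is a relabeling of digit sequences for points already in $\triangle$, not a factorization theorem for integer matrices, so it does not supply the missing step either. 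The route in \cite{SMALL-Uniqueness} bypasses this inverse problem entirely: one computes the characteristic polynomials of words like $F_1^{k}F_0$ for a handful of explicit TRIP maps as functions of the exponents, and verifies that these polynomial families exhaust the minimal polynomials of cubic units. Your proposal would become a proof if step~(2) were replaced by such an explicit enumeration; as written, that step is the missing idea.
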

  Actually, in  \cite{SMALL-Uniqueness} the above is a corollary to an explicit listing of five TRIP maps from which, in particular ways, the desired TRIP maps are formed.

  Here, we simply show how (eventual) periodicity for a TRIP map or for a Combo TRIP map for a point  $(\alpha, \beta)$ implies that $\alpha$ and $\beta$ are algebraic in the same number field of degree no more than three, subject to one additional assumption about uniqueness for the TRIP map (which will be the main topic of the next section).
  
  First, given a triangle tree sequence $(i_0,i_1,\ldots )$ with respect to some $(1, \alpha,\beta)$, define $$\triangle(i_0,i_1,\ldots ) = \bigcap_{n\geq 0} \triangle(i_0,i_1,\ldots ,i_n).$$
  
  We use the usual notation $\overline{i_0,i_1,\ldots ,i_l}$ to refer to the repetition of the finite sequence $i_0,i_1,\ldots ,i_l$
  
  We have
  \begin{thm} Suppose $(j_1, \ldots, j_n, \overline{i_0,i_1,\ldots ,i_l})$ is an eventually  periodic sequence such that $$\triangle(j_1, \ldots, j_n,\overline{i_0,i_1,\ldots ,i_l})=\{(1, \alpha, \beta)\}$$ is a single point. Then $\alpha$ and $\beta$ are contained in the same number field of degree at most $3$.

  \end{thm}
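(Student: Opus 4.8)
The plan is to translate the geometric/combinatorial statement into a statement about matrix products, and then to read off the algebraicity of $\alpha$ and $\beta$ from an eigenvector of a single matrix. First I would record the basic mechanism: each TRIP map step corresponds to right-multiplication by one of the integer matrices $F_0$, $F_1$ (built from $A_0$, $A_1$ and permutation matrices), so the triangle $\triangle(i_0,\dots,i_n)$ is the projectivization (under $\pi$) of the cone spanned by the columns of $B F_{i_0} F_{i_1}\cdots F_{i_n}$. Writing $M = B F_{j_1}\cdots F_{j_n} B^{-1}$ for the ``pre-period'' part and $N = B F_{i_0} F_{i_1}\cdots F_{i_l} B^{-1}$ for one full period, the nested triangles for the eventually periodic sequence $(j_1,\dots,j_n,\overline{i_0,\dots,i_l})$ are the images under $\pi$ of the cones spanned by the columns of $M N^m$ as $m\to\infty$. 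The hypothesis that this intersection is a single point $\{(1,\alpha,\beta)\}$ says precisely that these cones shrink down to a single ray in $\R^3$.

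Next I would argue that the limiting ray must be an eigenvector of $N$. The cones $C_m = \operatorname{Cone}(\text{columns of } MN^m)$ are nested and decreasing, and $N$ maps $C_m$ onto $C_{m+1}$ (since $MN^{m+1} = (MN^m)N$ and $N$ acts on the spanning vectors); because each $F_i$ and hence $N$ has nonnegative integer entries and determinant $1$, $N$ preserves the positive cone and the intersection $\bigcap_m C_m$ is a nonzero closed $N$-invariant cone. If that intersection is one ray $\R_{\ge 0}\vv$ for some $\vv\in\R^3$, then $N\vv = \lambda \vv$ for some $\lambda > 0$: $N$ maps the ray into itself, hence $\vv$ is a (positive) eigenvector of the integer matrix $N$, whose characteristic polynomial is a monic degree-$3$ polynomial with integer coefficients. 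Since $M$ is invertible over $\Q$ and $\pi(M\vv) = (1,\alpha,\beta)$ (because $M$ sends the limiting ray of the cones $C_m$, which is $\vv$ pulled back, appropriately — more precisely $\vv$ itself is already in the intersection so $\pi(\vv)=(1,\alpha,\beta)$ after possibly replacing $\vv$ by $M\vv$), the coordinates of $\vv$ lie in the splitting data of this cubic. Concretely, scaling so that the first coordinate of $\vv$ is $1$, we get $\vv = (1,\alpha,\beta)$, and the eigenvector equation $(N - \lambda I)\vv = 0$ expresses $\alpha$ and $\beta$ as $\Q$-rational (in fact $\Q(\lambda)$-rational, by Cramer's rule on the $2\times 2$ system obtained after using the first coordinate) functions of the eigenvalue $\lambda$, which is an algebraic number of degree at most $3$. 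Hence $\alpha,\beta \in \Q(\lambda)$, a number field of degree at most $3$, proving the claim.

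The main obstacle, and the step requiring the most care, is justifying that $\bigcap_m C_m$ being a single ray forces $\vv$ to be an \emph{eigenvector} and not merely that the cones degenerate in some weaker sense, together with the bookkeeping that the relevant eigenvalue $\lambda$ is real and that $\alpha,\beta$ genuinely lie in $\Q(\lambda)$ rather than a quadratic extension of it. For the first point I would note that $N$ is a homeomorphism of the closed positive cone to its image and maps $C_m$ bijectively onto $C_{m+1}$, so it maps $\bigcap_m C_m$ into itself; a nonzero linear map sending a ray into a ray fixes that ray setwise, giving the eigenvalue. For the second, the characteristic polynomial of $N$ has a real root (odd degree) which is the Perron eigenvalue associated with the surviving ray in the positive cone, and since $\vv$ has first coordinate normalizable to $1$ the two remaining linear equations determine $(\alpha,\beta)$ uniquely over $\Q(\lambda)$ — so no further field extension is needed. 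I would also remark that the pre-period matrix $M$ plays no role in the field: it only changes $(\alpha,\beta)$ by a $\Q$-rational projective transformation, which cannot enlarge the field. If $N$ happens to have eigenvalue $\lambda$ of degree $1$ or $2$, the same argument gives $\alpha,\beta$ in a field of degree $1$ or $2$, consistent with the ``at most $3$'' in the statement.
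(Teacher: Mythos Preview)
Your approach is the same as the paper's in spirit --- both identify the unique point with an eigenvector of an integer $3\times 3$ matrix and read off the degree bound from its characteristic polynomial --- but there is a genuine error in your cone argument. You claim that $N$ maps $C_m$ onto $C_{m+1}$ because $MN^{m+1}=(MN^m)N$, but right-multiplying a matrix by $N$ is \emph{not} the same as applying $N$ as a linear map on $\R^3$ to its columns: right-multiplication replaces the columns by linear combinations of the old columns, with coefficients taken from $N$. If $P=F_{j_1}\cdots F_{j_n}$ and $Q=F_{i_0}\cdots F_{i_l}$, the actual triangle cones are $BPQ^m(\R_{\ge 0}^3)$ (the vertices are the columns of $BPQ^m$, not of $MN^m=BPQ^mB^{-1}$), and the linear map sending the $m$-th cone to the $(m{+}1)$-st is $(BP)\,Q\,(BP)^{-1}$, not $N=BQB^{-1}$. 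Relatedly, $N$ need not have nonnegative entries, since $B^{-1}$ does not; it is $Q$ that is nonnegative. Once you use the correct conjugate of $Q$, the rest of your argument goes through: the limit ray is an eigenvector of a matrix in $GL_3(\Z)$, and your Cramer-type step places $\alpha,\beta$ in $\Q(\lambda)$ with $[\Q(\lambda):\Q]\le 3$.

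The paper avoids this bookkeeping entirely by doing up front what you mention only as a closing remark: it first strips the pre-period, using that $F_{j_1}\cdots F_{j_n}$ induces a $\Q$-rational projective bijection between $\triangle(\overline{i_0,\dots,i_l})$ and $\triangle(j_1,\dots,j_n,\overline{i_0,\dots,i_l})$, so one may assume the sequence is purely periodic. Then the period matrix $Q=F_{i_0}\cdots F_{i_l}$ itself --- which \emph{does} have nonnegative integer entries and determinant $\pm 1$ --- sends the unique point to a point with the same TRIP tree sequence, hence to itself, yielding the eigenvector equation directly without any conjugation. Leading with that reduction would make your argument both correct and considerably shorter.
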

  
  This follows from 
  
 \begin{Proposition}
Suppose $(\overline{i_0,i_1,\ldots ,i_l})$ is a periodic sequence such that $$\triangle(\overline{i_0,i_1,\ldots ,i_l})=\{(1, \alpha, \beta)\}$$ is a single point. Then $\alpha$ and $\beta$ are contained in the same number field of degree at most $3$.
\begin{proof}
Since $(\overline{i_0,i_1,\ldots ,i_l})$ is purely periodic, $(1, \alpha, \beta) \cdot F_0F_1\cdots F_{i_l}$ has sequence $(\overline{i_0,i_1,\ldots ,i_l})$. Since the set of points with sequence $(\overline{i_0,i_1,\ldots ,i_l})$ is one-dimensional, $(1, \alpha, \beta)$ is an eigenvector of $F_0F_1\cdots F_{i_l}$.  Further, the matrix  $F_0F_1\cdots F_{i_l}$ has a single largest eigenvalue, whose eigenvector is $(1, \alpha, \beta).$  Since  $F_0F_1\cdots F_{i_l} \in SL(3, \mathbb{Z})$, then by a calcuation we have that  $\alpha$ and $\beta$ are contained in a quadratic or cubic number field.
\end{proof}
\end{Proposition}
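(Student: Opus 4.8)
The plan is to convert the periodicity into an eigenvector equation for an integer $3\times 3$ matrix, and then read the degree of $\alpha$ and $\beta$ off of that matrix's characteristic polynomial. First I would set $M=F_{i_0}F_{i_1}\cdots F_{i_l}$, the product over one period; since each $F_{i_j}$ is a product of $A_0$ or $A_1$ with permutation matrices, $M$ has integer entries and determinant $\pm 1$, so its characteristic polynomial $p(t)$ is a monic cubic in $\Z[t]$. The crux is the self-similarity of the subdivision: if a point $q\in\triangle$ is represented by its coordinate vector $c_q=B^{-1}(1,q)^{T}$ relative to the basis $\{\vv_1,\vv_2,\vv_3\}$, then prepending the block $i_0,\ldots,i_l$ to a triangle-tree word $w$ (which replaces the vertex matrix $BF_w$ of $\triangle(w)$ by $BMF_w$) corresponds to left multiplication by $M$ on coordinate vectors, and it carries $\triangle(w)$ bijectively onto $\triangle(i_0,\ldots,i_l,w)$. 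Taking $w=\overline{i_0,\ldots,i_l}$, this operation carries the single point $\triangle(\overline{i_0,\ldots,i_l})=\{(1,\alpha,\beta)\}$ to itself, so the coordinate vector $c=B^{-1}(1,\alpha,\beta)^{T}$ satisfies $Mc=\lambda c$ for some scalar $\lambda$. (Equivalently, conjugating by the integer matrix $B$, the vector $(1,\alpha,\beta)^{T}$ is an eigenvector of $BMB^{-1}$, which has the same characteristic polynomial.)

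Next I would pin down $\lambda$. Writing $P^{n}$ for the block $i_0,\ldots,i_l$ repeated $n$ times, the nested triangles $\triangle(P^{n})$ have vertex coordinate matrices $M^{n}$, and by hypothesis they shrink to the single point $(1,\alpha,\beta)$, so the columns of $M^{n}$ converge projectively to the single line $\R c$. Because $A_0$, $A_1$ and all permutation matrices are non-negative, $M$ is a non-negative matrix, and Perron--Frobenius, combined with the single-point hypothesis, forces $\lambda$ to be a simple real eigenvalue of strictly largest modulus with eigenline $\R c$; in particular $\ker(M-\lambda I)$ is one-dimensional. Being a root of the monic integer cubic $p(t)$, the number $\lambda$ generates a field $\Q(\lambda)$ of degree $1$, $2$, or $3$ over $\Q$.

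To finish, observe that $\ker(M-\lambda I)$ is the kernel of a matrix with entries in $\Q(\lambda)$, hence---being one-dimensional---is spanned by some $w_0\in\Q(\lambda)^{3}$; since $c$ spans the same line, $(1,\alpha,\beta)^{T}=Bc$ is a scalar multiple of $Bw_0\in\Q(\lambda)^{3}$ (as $B$ is rational) with first coordinate equal to $1$, whence $(1,\alpha,\beta)^{T}=Bw_0/(Bw_0)_1\in\Q(\lambda)^{3}$ and so $\alpha,\beta\in\Q(\lambda)$, a single number field of degree at most $3$. I expect the middle step to be the main obstacle: making the self-similarity precise enough to extract the eigenvector relation is routine bookkeeping, but using the ``single point'' hypothesis---via non-negativity and Perron--Frobenius---to be certain that $\lambda$ has a one-dimensional eigenspace is the genuinely delicate point, since without it the eigenvector need not have a representative defined over $\Q(\lambda)$.
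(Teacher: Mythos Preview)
Your proposal is correct and follows essentially the same route as the paper: periodicity together with the single-point hypothesis makes $(1,\alpha,\beta)$ (after conjugation by $B$) an eigenvector of the integer period matrix $M=F_{i_0}\cdots F_{i_l}$, and then the monic integer characteristic polynomial of $M$ bounds the degree. You are more explicit than the paper on the two points it leaves implicit---namely why the dominant eigenvalue has a one-dimensional eigenspace (you deduce this from the single-point hypothesis via a Perron--Frobenius argument, whereas the paper simply asserts a single largest eigenvalue) and why a one-dimensional $\lambda$-eigenspace forces $\alpha,\beta\in\Q(\lambda)$ (which the paper absorbs into ``by a calculation'')---and you also keep track of the conjugation by $B$, which the paper suppresses.
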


Unfortunately, not every eventually periodic sequence for every TRIP map determines a unique point.   In fact, much of the point of \cite{GarrityT05} was showing that we do have $\triangle(\overline{i_0,i_1,\ldots ,i_l}) $  being a unique point for the original triangle map, which recall is TRIP map $T_{(e,e,e)}.$      The goal of the next section is to determine for which TRIP sequences are we guaranteed that periodicity  implies that $\triangle(\overline{i_0,i_1,\ldots ,i_l}) $   is unique.  

\subsection{Some Examples of Periodicity}

For many TRIP maps, though, for specific examples of periodicity we can explicitly link the periodic sequence with algebraic properties of $\alpha$ and $\beta$.

In this subsection, we'll look at some periodicity properties for some specific TRIP Maps. We will do several examples illustrating periodic points and
also show how the addition of other maps improves previous results
regarding the triangle map.
\begin{ex}
(This was in  \cite{GarrityT01})
Let $\alpha$ be the root of $x^{3}+x-1$ that lies between 0 and
1. By the Intermediate Value Theorem, $\alpha$ exists. Consider the
triangle sequence of $\overline{\alpha}=(\alpha,\alpha^{2}).$ Since $(\alpha,\alpha^{2})$
lies in $\triangle_{0},$ the first integer in $\overline{\alpha}$'s
triangle sequence is 0. Now, we apply the triangle map to
$\overline{\alpha}.$ 
\[
T_{(e,e,e)}(\alpha,\alpha^{2})=\Big(\frac{\alpha^{2}}{\alpha},\frac{1-\alpha}{\alpha}\Big)
\]
\[
=\Big(\alpha,\frac{\alpha^{3}}{\alpha}\Big)
\]
\[
=(\alpha,\alpha^{2}).
\]

Thus, since $\overline{\alpha}$ is a fixed point of $T_{(e,e,e)},$
every iteration of the triangle map lies in $\triangle_{0},$ and the
triangle sequence of $\overline{\alpha}$ is $(0,0,0,\ldots)$.
\end{ex}
This example emphasizes  that to understand periodicity with respect to TRIP Maps, it is
simpler to look at eigenvectors of matrices. In fact,
the example above can be done using the fact that $(1,\alpha,\alpha^{2}),$
as defined above, is an eigenvector of the matrix representation of
$T_{(e,e,e)}^{0}.$ In general, if $(1,\alpha,\beta)$ is an eigenvector
of the matrix representing some triangle map $T_{(\sigma,\tau_{0},\tau_{1})}$ and $(\alpha,\beta)\in\triangle,$
then $T_{(\sigma,\tau_{0},\tau_{1})}$ maps $(1,\alpha,\beta)$
to itself. Thus, $(1,\alpha,\beta)$ has a purely periodic trip
sequence of period 1. This result holds for purely periodic TRIP
sequences of any length. Next, we'll do another example of pure periodicity
using a different triangle map from two different approaches: the algebraic manipulation approach and the eigenvector approach.
\begin{ex}
Let $\alpha$ be the root of $2x^{3}-5x^{2}+x+1=0$ that lies in $(0,1).$
By the Intermediate Value Theorem, $\alpha$ exists. Consider the
trip sequence of $\overline{\alpha}=(\alpha,2\alpha-2\alpha^2)$ under the map
$T_{((2\ 3),e,(2\ 3))}.$ We know that $\overline{\alpha}$ lies in
$\triangle_{1},$ so the first integer of $\overline{\alpha}$'s triangle
sequence is 1. Now, we can apply $T_{((2\ 3),e,(2\ 3))}$ to $\overline{\alpha}.$
\[
T_{((2\ 3),e,(2\ 3))}(\alpha,2\alpha-2\alpha^2)=(2\alpha-1 ,\frac{1}{\alpha}-2\alpha).
\]

We know that $(2\alpha-1 ,\frac{1}{\alpha}-2\alpha)$ lies in $\triangle_{2},$ so the second
integer of $\overline{\alpha}$'s TRIP sequence is 2. Now, we
can apply $T_{((2\ 3),e,(2\ 3))}$ to $(2\alpha-1 ,\frac{1}{\alpha}-2\alpha).$
\[
T_{((2\ 3),e,(2\ 3))}(2\alpha-1 ,\frac{1}{\alpha}-2\alpha)=\left(2+\frac{1}{\alpha}+\frac{1}{1-2\alpha},\frac{3}{2\alpha-1 }-5-\frac{2}{\alpha}\right),
\]

which, after some messy simplification, equals $(\alpha,2\alpha-2\alpha^2).$ Hence, the TRIP sequence of $\overline{\alpha}$ is $1,2,1,2,1,2\dots.$
\end{ex}
Now, we'll use eigenvectors to solve the same problem.
\begin{ex}
Consider the map $T_{((2\ 3),e,(2\ 3))}\circ T_{((2\,3),e,(2\,3))}$ with $k=1$ for the map on the left and $k=2$ for the map on the right.
This corresponds to the matrix
\[
M=\left(\begin{array}{ccc}
0 & -1 & 2\\
1 & 3 & -6\\
-1 & -2 & 5
\end{array}\right).
\]

$M$ has $(1,\alpha,2\alpha-2\alpha^2),$ as defined in the previous
example, as an eigenvector. Thus, the Trip sequence of $\overline{\alpha}$
under the map $T_{((2\ 3),e,(2\ 3))}$ is $1,2,1,2,1,2\ldots .$
\end{ex}
As we can see in the previous two examples, thinking of periodicity
in terms of eigenvectors greatly simplifies computation. In addition,
it can be used to prove a variety of theorems concerning periodicity.
In  \cite{GarrityT01} it was shown for the triangle
map that 
\begin{thm}
We have that $(\alpha,\alpha^{2})$ has a purely periodic triangle
sequence of period one if and only if $\alpha^{3}+k\alpha^{2}+\alpha-1=0$
for some $k\in\mathbb{N}.$
\end{thm}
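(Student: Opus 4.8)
The plan is to prove both directions by translating the statement about purely periodic triangle sequences of period one into a statement about a single matrix having $(1,\alpha,\alpha^2)$ as an eigenvector, exploiting the eigenvector reformulation emphasized in the preceding examples. First I would recall that for the triangle map $T_{(e,e,e)}$, being on the subtriangle $\triangle_k$ and applying $T$ corresponds to acting by the matrix
\[
M_k = \left(\begin{array}{ccc} 0 & 0 & 1 \\ 1 & 0 & -1 \\ 0 & 1 & -k \end{array}\right),
\]
as computed in the excerpt for the triangle map example. A point $(\alpha,\beta)\in\triangle$ has a purely periodic triangle sequence of period one with repeating digit $k$ precisely when $(1,\alpha,\beta)$ is a (Perron-Frobenius, i.e.\ leading) eigenvector of $M_k$ and $(\alpha,\beta)$ lies in $\triangle_k$.

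For the forward direction, I would assume $(\alpha,\alpha^2)$ has a purely periodic triangle sequence of period one, say with digit $k$. Then $(1,\alpha,\alpha^2)$ is an eigenvector of $M_k$: computing $M_k(1,\alpha,\alpha^2)^T = (\alpha^2, 1-\alpha^2, \alpha - k\alpha^2)^T$, the eigenvector condition forces this to be a scalar multiple of $(1,\alpha,\alpha^2)$. From the first coordinate the scalar must be $\alpha^2$, so the second coordinate gives $1-\alpha^2 = \alpha^3$ and the third gives $\alpha - k\alpha^2 = \alpha^4$. The second equation already yields $\alpha^3 + \alpha^2 - 1 = 0$; dividing by $\alpha$ the third gives $1 - k\alpha = \alpha^3 = 1 - \alpha^2$, hence $\alpha^2 = k\alpha$, i.e. $\alpha = k$ — so I should be more careful and instead derive the relation cleanly: the correct bookkeeping (matching the actual formula $T_{(e,e,e)}(\alpha,\alpha^2) = (\alpha, (1-\alpha-k\alpha^2)/\alpha)$ and demanding this equal $(\alpha,\alpha^2)$) gives $(1-\alpha - k\alpha^2)/\alpha = \alpha^2$, that is $1 - \alpha - k\alpha^2 = \alpha^3$, i.e. $\alpha^3 + k\alpha^2 + \alpha - 1 = 0$. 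For the reverse direction, I would assume $\alpha^3 + k\alpha^2 + \alpha - 1 = 0$ for some $k\in\mathbb{N}$ and run the computation backwards: this polynomial relation gives $1-\alpha - k\alpha^2 = \alpha^3 = \alpha\cdot\alpha^2$, so $T_{(e,e,e)}(\alpha,\alpha^2) = (\alpha^2/\alpha, (1-\alpha-k\alpha^2)/\alpha) = (\alpha,\alpha^2)$, provided $(\alpha,\alpha^2)$ actually lies in $\triangle_k$.

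The main obstacle, and the step I would spend the most care on, is verifying the geometric side conditions: that $\alpha$ lies in $(0,1)$ so that $(\alpha,\alpha^2)\in\triangle$ at all, and that $(\alpha,\alpha^2)$ lies in the correct subtriangle $\triangle_k$ — i.e.\ that $k = \lfloor (1-\alpha)/\alpha^2 \rfloor$. For the forward direction I need $\alpha\in(0,1)$ as a hypothesis implicit in ``$(\alpha,\alpha^2)$ has a triangle sequence''; for the reverse direction I would argue that $f(t) = t^3 + kt^2 + t - 1$ satisfies $f(0) = -1 < 0$ and $f(1) = k+1 > 0$, so by the Intermediate Value Theorem there is a root $\alpha\in(0,1)$, and then check $1 - \alpha - k\alpha^2 = \alpha^3 \geq 0$ while $1 - \alpha - (k+1)\alpha^2 = \alpha^3 - \alpha^2 < 0$ (since $0<\alpha<1$), which is exactly the membership condition for $\triangle_k$; one should also confirm $1\geq\alpha\geq\alpha^2>0$ so the point is genuinely in $\triangle$. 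Assembling these pieces gives the equivalence.
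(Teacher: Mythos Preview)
Your argument is correct, and in fact the paper does not supply its own proof of this theorem at all: it is simply quoted from \cite{GarrityT01}. The method you use --- compute $T_{(e,e,e)}(\alpha,\alpha^2)$ directly, set it equal to $(\alpha,\alpha^2)$, and read off the cubic relation --- is exactly the computation the paper carries out in its first worked example of \S6.2 for the special case $k=0$, so your approach is the intended one.

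Two small remarks. First, your initial eigenvector computation went astray because you multiplied $M_k$ on the left of a column vector; the paper's convention is $(1,x,y)M_k$ as a row vector, which immediately gives $(1,\alpha,\alpha^2)M_k = (\alpha,\alpha^2,1-\alpha-k\alpha^2)$ and hence the correct cubic without the detour. Second, your verification of the geometric side conditions (that the relevant root lies in $(0,1)$ and that $(\alpha,\alpha^2)\in\triangle_k$ via $1-\alpha-k\alpha^2=\alpha^3\geq 0$ and $1-\alpha-(k+1)\alpha^2=\alpha^3-\alpha^2<0$) is exactly what is needed and is more careful than what the paper itself writes down in its examples.
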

We can combine two different TRIP Maps, $T_{(e,e,e)}$ and $T_{((1\ 3\ 2),(1\ 3\ 2),e)}$,
to extend this theorem.
\begin{thm}
Let $A,B\in\mathbb{Z}$ with $A\geq0$ and $B\geq1.$ Then, if $\alpha^{3}+A\alpha^{2}+B\alpha-1=0,$
$(\alpha,\alpha^{2})$ has a periodic triangle sequence under a combination
of the maps $T_{(e,e,e)}\mbox{ and }T_{((1\,3\,2),(1\,3\,2),e)}.$\end{thm}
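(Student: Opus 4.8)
The plan is to use the eigenvector philosophy emphasized in the preceding examples, together with the matrix descriptions of $T_{(e,e,e)}$ and $T_{((1\,3\,2),(1\,3\,2),e)}$ worked out from the general formula. First I would compute, for the map $T_{((1\,3\,2),(1\,3\,2),e)}$, the matrix $M_k = (B F_0^{-1} F_1^{-k} B^{-1})^T$ as a function of $k$, exactly as was done for $T_{(e,e,e)}$ in the first example. The claim to prove is that whenever $\alpha^3 + A\alpha^2 + B\alpha - 1 = 0$ with $A \ge 0$ and $B \ge 1$, the vector $(1,\alpha,\alpha^2)$ is the dominant eigenvector of some product of these two matrices (with appropriately chosen $k$-values in each factor), and that the corresponding point $(\alpha,\alpha^2)$ actually lands in the subtriangles $\triangle_k$ dictated by those exponents at each stage, so that the product really is realized by the dynamics.

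The heart of the argument is a bookkeeping reduction: one shows that a single step of the right Combo TRIP map sends $(\alpha,\alpha^2)$, for $\alpha$ a root of $x^3 + A x^2 + B x - 1$, to $(\alpha', \alpha'^2)$ where $\alpha'$ is a root of a new cubic $x^3 + A' x^2 + B' x - 1$ with $(A',B')$ obtained from $(A,B)$ by an explicit rule that strictly decreases some nonnegative integer potential (for instance $A+B$, or $B$ with $A$ as a tiebreaker), until one reaches the base case $B = 1$, $A = k$, which is precisely Theorem 5.5 — the purely period-one case $\alpha^3 + k\alpha^2 + \alpha - 1 = 0$ handled by $T_{(e,e,e)}$ alone. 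So the strategy is: (i) verify the base case is exactly the old theorem; (ii) find the descent move — applying $T_{(e,e,e)}$ once reduces $A$ while changing $B$, or applying $T_{((1\,3\,2),(1\,3\,2),e)}$ with suitable $k$ reduces $B$ — each step tracked at the level of the pair $(A,B)$ and simultaneously at the level of which $\triangle_k$ the point sits in; (iii) conclude by induction that finitely many steps return to a cubic already handled, giving eventual periodicity, and then purely periodic by the eigenvector remark (a point whose orbit returns to itself is a dominant eigenvector of the corresponding $SL(3,\Z)$ product, hence has a purely periodic sequence).

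I would organize the computation around the observation from the examples that $(1,\alpha,\beta)$ having a periodic Combo TRIP sequence is equivalent to $(1,\alpha,\beta)$ being the Perron eigenvector of the associated matrix product; so really I only need to exhibit, for each $(A,B)$, a finite word in $\{F_0^{(k)}, F_1'^{(k)}\}$ whose product matrix has $(1,\alpha,\alpha^2)$ as dominant eigenvector, and then separately confirm that $(\alpha,\alpha^2)$ genuinely visits the subtriangles named by that word — the latter is a sequence of inequalities of the form $1 - (\text{linear in }\alpha,\alpha^2) \ge 0 > \cdots$, checkable using $\alpha^3 = 1 - B\alpha - A\alpha^2$ and $0 < \alpha < 1$.

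The main obstacle I anticipate is step (ii): finding the correct descent move and proving it is always available, i.e. showing that for every $(A,B)$ with $A \ge 0$, $B \ge 1$ that is not the base case, exactly one of the two maps (with the right choice of $k$) both acts algebraically as $(A,B) \mapsto (A',B')$ with $(A',B')$ smaller in the chosen well-ordering and has the point $(\alpha,\alpha^2)$ actually lying in the required $\triangle_k$. Getting the two permutation triples to cooperate — one lowering $A$, the other lowering $B$, with no $(A,B)$ falling through the cracks and no infinite regress — is where the real content lies; the matrix identities and the triangle-membership inequalities are then routine though tedious. A secondary point to be careful about is that ``periodic'' in the statement should be read as eventually periodic returning to a purely periodic tail, which is all the eigenvector argument of Proposition 5.2 needs.
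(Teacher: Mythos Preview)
Your plan is considerably more elaborate than what the paper actually does. The paper's proof is a direct one-shot computation: it writes down the matrix $M_1$ corresponding to $T_{((1\,3\,2),(1\,3\,2),e)}$ with $k=1$, raises it to the $B$th power to get
\[
M_1^B=\begin{pmatrix}1&0&0\\-B&1&0\\0&0&1\end{pmatrix},
\]
writes down the matrix $M_2$ corresponding to $T_{(e,e,e)}$ with $k=A$, multiplies to obtain
\[
M_1^B M_2=\begin{pmatrix}0&0&1\\1&0&-B\\0&1&-A\end{pmatrix},
\]
and observes that $(1,\alpha,\alpha^2)$ is an eigenvector precisely when $\alpha^3+A\alpha^2+B\alpha-1=0$. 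That is the entire argument --- no descent, no induction on $(A,B)$, no reduction to the base case of Theorem~6.5. The word in the two maps is simply ``apply $T_{((1\,3\,2),(1\,3\,2),e)}$ (at $k=1$) $B$ times, then apply $T_{(e,e,e)}$ (at $k=A$) once,'' exhibited all at once rather than discovered step by step.

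Your descent strategy is not wrong in spirit and might well be made to work, but it manufactures difficulty: the ``main obstacle'' you anticipate --- finding compatible descent moves that never fall through the cracks --- simply does not arise once one guesses the closed-form word. On the other hand, you are right to flag the subtriangle-membership verification (that $(\alpha,\alpha^2)$ genuinely lies in the $\triangle_k$ prescribed at each step). The paper's proof omits this check entirely, so your instinct there is sound; it is just that the check, once the explicit word is in hand, is a short inequality computation rather than part of an inductive mechanism.
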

\begin{proof}
We know that $ T_{((1\,3\,2),(1\,3\,2),e)},$ when $k=1$, corresponds to the matrix
\[
M_{1}=\left(\begin{array}{ccc}
1 & 0 & 0\\
-1 & 1 & 0\\
0 & 0 & 1
\end{array}\right),
\]

and
\[
M_{1}^{B}=\left(\begin{array}{ccc}
1 & 0 & 0\\
-B & 1 & 0\\
0 & 0 & 1
\end{array}\right).
\]

We also know that $T_{(e,e,e)}$ corresponds to the matrix when $k=A$
\[
M_{2}=\left(\begin{array}{ccc}
0 & 0 & 1\\
1 & 0 & -1\\
0 & 1 & -A
\end{array}\right).
\]

The product $M_{1}^{B}M_{2}$ is 
\[
\left(\begin{array}{ccc}
0 & 0 & 1\\
1 & 0 & -B\\
0 & 1 & -A
\end{array}\right),
\]

which has eigenvector $(1,\alpha,\alpha^{2})$ where $\alpha^{3}+A\alpha^{2}+B\alpha-1=0,$
as desired.\end{proof}

While the results in this section show that certain pairs in $\triangle$ have a purely periodic TRIP sequence by virtue of being of some form, it is not always the case that the converse is true. Here, we run into the issue of uniqueness of trip sequences, which we now treat in  depth.

\section{Uniqueness}\label{uniqueness}
We have just seen that the eventual periodicity of a  TRIP or Combo TRIP sequence implies that $\alpha$ and $\beta$ are algebraic in the same number field of degree less than or equal to three, provided the particular triangle tree sequence defines a unique point.  Unfortunately,  TRIP sequences need not always define unique points, as shown in \cite{GarrityT05} for the triangle map, $T_{(e,e,e)}$.  In this section we give a condition that determines when a TRIP sequence  will define a single point.  This will allow us to apply this criterion to periodic triangle tree sequences. 

\subsection{General Uniqueness Results}\label{generaluniqueness}
Given a TRIP tree sequence $(i_0,i_1,\ldots )$ with respect to some $(\sigma, \tau_0,\tau_1)$, recall that  $\triangle(i_0,i_1,\ldots ) = \bigcap_{n\geq 0} \triangle(i_0,i_1,\ldots ,i_n).$ The following lemma states that the cardinality of $\triangle(i_0,i_1,\ldots )$ does not depend on the first finitely many terms of $(i_0,i_1,\ldots )$.
\begin{lemma} \label{init}
The intersection $\triangle(i_0,i_1,\ldots )$ is a unique point if and only if the intersection $\triangle(i_k,i_{k+1},\ldots )$ is a unique point for any $k \in \mathbb{N}$.
\begin{proof}
The product $F_{i_0}F_{i_1}\cdots F_{i_k}$ gives a bijection between $\triangle$ and $\triangle(i_0,i_1,\ldots ,i_k)$ that takes $\triangle(i_0,i_1,\ldots )$ to $\triangle(i_k, i_{k+1},\ldots )$.
\end{proof}
\end{lemma}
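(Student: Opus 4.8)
The plan is to produce, for each fixed $k$, an explicit bijection between $\triangle(i_k, i_{k+1}, \ldots)$ and $\triangle(i_0, i_1, \ldots)$, and then to observe that a bijection preserves the property of being a one-point set (indeed, every cardinality). The bijection will be the projective coordinate change that carries $\triangle$ onto the subtriangle $\triangle(i_0,\ldots,i_{k-1})$.

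Concretely, I would set $B = (\vv_1\ \vv_2\ \vv_3)$ and $M = B\,F_{i_0}F_{i_1}\cdots F_{i_{k-1}}\,B^{-1}$, and let $\Phi$ be the map on $\triangle$ induced by $M$ (apply $M$ to homogeneous coordinates and project by $\pi$; equivalently, in the row-vector convention used to define $T_{\sigma,\tau_0,\tau_1}$, send $(1,x,y)$ to $\pi((1,x,y)M^T)$). By the construction in Section~\ref{intro216} the $F_i$ are invertible nonnegative integer matrices preserving the positive cone, so $\Phi$ is a well-defined bijection $\triangle \to \triangle(i_0,\ldots,i_{k-1})$: its defining matrix sends the vertex matrix $B$ of $\triangle$ to $B\,F_{i_0}\cdots F_{i_{k-1}}$, the vertex matrix of $\triangle(i_0,\ldots,i_{k-1})$. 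The key step is then to track where $\Phi$ sends the deeper subtriangles: for any finite word $(j_0,\ldots,j_m)\in\{0,1\}^{m+1}$, the triangle $\triangle(j_0,\ldots,j_m)$ has vertex matrix $B\,F_{j_0}\cdots F_{j_m}$, and
$$M\cdot\big(B\,F_{j_0}\cdots F_{j_m}\big) = B\,F_{i_0}\cdots F_{i_{k-1}}F_{j_0}\cdots F_{j_m},$$
which is the vertex matrix of $\triangle(i_0,\ldots,i_{k-1},j_0,\ldots,j_m)$; hence $\Phi(\triangle(j_0,\ldots,j_m)) = \triangle(i_0,\ldots,i_{k-1},j_0,\ldots,j_m)$.

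To finish, I would specialize $j_\ell = i_{k+\ell}$, obtaining $\Phi(\triangle(i_k,\ldots,i_{k+m})) = \triangle(i_0,\ldots,i_{k+m})$ for every $m\ge 0$. Since $\Phi$ is injective it commutes with intersections, so
$$\Phi\big(\triangle(i_k,i_{k+1},\ldots)\big) = \Phi\Big(\bigcap_{m\ge 0}\triangle(i_k,\ldots,i_{k+m})\Big) = \bigcap_{m\ge 0}\triangle(i_0,\ldots,i_{k+m}) = \triangle(i_0,i_1,\ldots).$$
Thus $\Phi$ restricts to a bijection $\triangle(i_k,i_{k+1},\ldots)\to\triangle(i_0,i_1,\ldots)$, and in particular one side is a single point exactly when the other is. I do not expect a genuine obstacle here; the content is essentially the one-line remark of the sketch. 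The only things to be careful with are purely bookkeeping: choosing the length of the matrix product to match the indexing (stopping at $F_{i_{k-1}}$ so as to land on $\triangle(i_0,\ldots,i_{k-1})$), transposing correctly between the column-vertex picture and the row-vector formula for $T_{\sigma,\tau_0,\tau_1}$, and recording that $M$ really does carry $\triangle$ bijectively onto $\triangle(i_0,\ldots,i_{k-1})$ — which holds because products of $A_0$, $A_1$ and permutation matrices are nonnegative and unimodular, so $\pi$ is injective on the relevant cone.
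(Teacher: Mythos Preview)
Your proof is correct and is essentially an expanded version of the paper's one-line argument: both exhibit the projective map induced by the finite product of the $F_{i_j}$ as a bijection from $\triangle$ onto the initial subtriangle that carries the tail intersection $\triangle(i_k,i_{k+1},\ldots)$ onto $\triangle(i_0,i_1,\ldots)$. Your bookkeeping (stopping the product at $F_{i_{k-1}}$ so that the tail begins at index $k$) is in fact tidier than the paper's own sentence, which writes $F_{i_0}\cdots F_{i_k}$ and thereby has a harmless off-by-one between the product and the tail.
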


Let $X_{n, 1} = (x_{n,1},y_{n,1},z_{n,1}),$ $X_{n,2} = (x_{n,2},y_{n,2},z_{n,2})$, and $X_{n, 3} =(x_{n,3},y_{n,3},z_{n,3})$ be the three vertices of $\triangle(i_0,i_1,\ldots ,i_n)$, given in Cartesian coordinates. The vertices of the triangle after projection are $\hat{X}_{n,i} = (\frac{y_{n,i}}{x_{n,i}},\frac{z_{n,i}}{x_{n,i}}).$

The goal of this subsection is 
\begin{thm} \label{ratios}
Let $(i_0,i_1,\ldots )$ be a TRIP tree sequence with respect to any combination of maps. Suppose there exists a constant $C$ such that for every $n \geq 0$ and every pair $1 \leq i, j \leq 3$,
$$\frac{x_{n,i}}{x_{n,j}} \leq C.$$
Then the intersection $\triangle(i_0,i_1,\ldots )$ is a unique point.
\end{thm}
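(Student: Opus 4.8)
The plan is to show that under the hypothesis, the nested triangles $\triangle(i_0,i_1,\ldots,i_n)$ shrink to a point, which I will do by controlling both the Euclidean diameter of the projected triangles $\hat{X}_{n,1},\hat{X}_{n,2},\hat{X}_{n,3}$ and the determinant (an area-type invariant) of the unprojected matrices. First I would record the structural fact that the matrix $M_n = (\vv_1\ \vv_2\ \vv_3)F_{i_0}F_{i_1}\cdots F_{i_n}$ has columns $X_{n,1},X_{n,2},X_{n,3}$, lies in $SL(3,\Z)$ (each $F_i = \sigma A_i \tau$ is a product of an $SL(3,\Z)$ matrix with permutation matrices, and even for Combo maps the building blocks still have determinant $\pm 1$; one checks the sign works out so $\det M_n = 1$), and that all entries are nonnegative integers. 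The nestedness $\triangle(i_0,\ldots,i_n) \supseteq \triangle(i_0,\ldots,i_{n+1})$ is immediate from the subdivision picture, and since the intersection is a nested intersection of nonempty compact sets it is nonempty; the only issue is showing it is not larger than a point.

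The key step is a diameter estimate for the projected triangle. Write $\hat{X}_{n,i} = (y_{n,i}/x_{n,i}, z_{n,i}/x_{n,i})$. For two vertices $i,j$, the difference $\hat{X}_{n,i} - \hat{X}_{n,j}$ has first coordinate $\frac{y_{n,i}x_{n,j} - y_{n,j}x_{n,i}}{x_{n,i}x_{n,j}}$; the numerator is (up to sign) a $2\times 2$ minor of $M_n$, and I would bound such minors using $\det M_n = 1$ together with the entry bounds. Concretely, expanding $\det M_n = 1$ along the third row gives $1 = \sum_k \pm z_{n,k} (\text{$2\times 2$ minor in rows }1,2)$, so at least one of these $2\times 2$ minors $y_{n,i}x_{n,j}-y_{n,j}x_{n,i}$ is nonzero hence has absolute value $\geq 1$; but more usefully, one wants an \emph{upper} bound on these minors relative to $x_{n,i}x_{n,j}$. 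Here is where the hypothesis $x_{n,i}/x_{n,j}\leq C$ enters: since the $x$-coordinates are comparable and (after projection into $\triangle$) the ratios $y/x, z/x$ lie in $[0,1]$, each $2\times 2$ minor $|y_{n,i}x_{n,j}-y_{n,j}x_{n,i}| \leq x_{n,i}x_{n,j}\cdot|\hat y_{n,i}-\hat y_{n,j}| \leq x_{n,i}x_{n,j}$, which is not yet enough. The real input is that $|\det M_n|=1$ forces the three columns to be nearly linearly dependent once their entries are large: the (unsigned) volume of the parallelepiped on $X_{n,1},X_{n,2},X_{n,3}$ is $1$, while $\|X_{n,i}\| \asymp x_{n,i}$ grows (it must, since there are only finitely many $SL(3,\Z)$ matrices with bounded entries and the triangles are strictly nested), so the triangle on $X_{n,1},X_{n,2},X_{n,3}$ becomes very ``flat,'' and projecting a flat triangle whose vertices have comparable $x$-coordinates yields a small planar triangle. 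Quantitatively: the area of the projected triangle is $\frac{1}{2}|(\hat X_{n,2}-\hat X_{n,1})\times(\hat X_{n,3}-\hat X_{n,1})| = \frac{1}{2}\frac{|\det M_n|}{x_{n,1}x_{n,2}x_{n,3}} = \frac{1}{2 x_{n,1}x_{n,2}x_{n,3}} \to 0$, and since the triangle's $x$-ratios are bounded by $C$ one can convert small area plus bounded aspect ratio into small diameter — a triangle inside the bounded region $\triangle$ with area $\epsilon$ need not have small diameter in general, but I would instead bound diameter directly via the minor/edge-length computation using that two independent edges both have length $O(1/x_{n,i})$ after using $\det = 1$ to pin down each edge's two minors.

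So the cleaner route, which I would actually write up, is: each edge $\hat X_{n,i} - \hat X_{n,j}$ has coordinates that are $2\times 2$ minors of $M_n$ divided by $x_{n,i}x_{n,j}$; there are six distinct $2\times 2$ minors of $M_n$ involving rows $\{1,3\}$ or $\{2,3\}$ across column pairs, and the cofactor expansion of $\det M_n = 1$ plus the nonnegativity and the comparability $x_{n,i}\asymp x_{n,j}$ forces $\max x_{n,i} \to \infty$ (else only finitely many matrices, contradicting strict nesting of a properly decreasing sequence of triangles — or, if the sequence is eventually constant, the triangle has already collapsed and we are done by a separate trivial case). Then each minor, being at most (roughly) $\max_k x_{n,k}^2 / C$ in the worst "flat" configuration dictated by $\det=1$... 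The honest statement is that I'd show $\operatorname{diam}\widehat{\triangle(i_0,\ldots,i_n)} \leq \frac{\text{const}\cdot C^{2}}{\max_k x_{n,k}}$ by a direct manipulation, and that $\max_k x_{n,k}\to\infty$. Given that, $\bigcap_n \widehat{\triangle(i_0,\ldots,i_n)}$ has diameter $0$, so it is a single point.

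\textbf{Main obstacle.} The crux — and the step I expect to require the most care — is proving $\max_k x_{n,k} \to \infty$ and turning the area bound $\frac{1}{2x_{n,1}x_{n,2}x_{n,3}}$ into a genuine \emph{diameter} bound; flat triangles can be long, so one genuinely needs the hypothesis $x_{n,i}/x_{n,j}\leq C$ to rule out the long-thin case, and one needs to know the edges don't degenerate in a way that keeps the triangle large. I would handle the divergence of $\max_k x_{n,k}$ by a pigeonhole/finiteness argument (only finitely many $SL(3,\Z)$ matrices have all entries $\leq N$, and the triangles $\triangle(i_0,\ldots,i_n)$ are genuinely distinct as $n$ grows unless the intersection has already shrunk to a point), and handle the diameter conversion by writing each of the (at most three) edge vectors explicitly in terms of minors over $x_{n,i}x_{n,j}$, bounding each minor by $C\cdot\max(x_{n,i}^2, \ldots)$ via the unimodularity constraint, and dividing — the bounded-ratio hypothesis is exactly what makes this division produce a bound of the form $O(C^{c}/\max_k x_{n,k})$. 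Everything else (nonemptiness, $SL(3,\Z)$ membership, nestedness, reduction via Lemma \ref{init} if convenient) is routine.
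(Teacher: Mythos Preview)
Your approach is genuinely different from the paper's, and the gap you flag as ``the crux'' is real and unfilled. The area identity
\[
\mathrm{Area}\big(\hat X_{n,1},\hat X_{n,2},\hat X_{n,3}\big)=\frac{|\det M_n|}{2\,x_{n,1}x_{n,2}x_{n,3}}=\frac{1}{2\,x_{n,1}x_{n,2}x_{n,3}}
\]
is correct, and $\max_i x_{n,i}\to\infty$ is easy (in fact $\sum_i x_{n,i}$ grows by at least $1$ each step, since one column becomes a sum of two). But small area does not imply small diameter: long thin triangles are not excluded. Your proposed mechanism---``bounding each minor by $C\cdot\max(x_{n,i}^2,\ldots)$ via the unimodularity constraint''---does not work as stated. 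The relation $\det M_n=\pm1$ is a single alternating sum of products $z_{n,k}\cdot m_{ij}$ of third-row entries with $2\times2$ minors $m_{ij}=x_{n,i}y_{n,j}-x_{n,j}y_{n,i}$; it forces cancellation, not smallness of individual minors. The hypothesis $x_{n,i}/x_{n,j}\le C$ constrains only the first row and says nothing direct about those minors, so the claimed bound $\mathrm{diam}\le O(C^2/\max_k x_{n,k})$ is unsupported. Any rescue would have to use the subdivision structure, not just unimodularity plus bounded first-row ratios.

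The paper avoids this entirely by working with side lengths rather than area. Two short lemmas do the work. First, the new vertex $\widehat{X_{n,i}+X_{n,j}}$ divides the segment $\hat X_{n,i}\hat X_{n,j}$ in the ratio $x_{n,j}:x_{n,i}$, so each subdivision shrinks one side by a factor at most $C/(C+1)<1$. Second, under the ratio bound no vertex can persist for more than $2C^2$ steps (else the opposite side's first coordinate would exceed $C$ times $x$ of the persistent vertex). Together these force every side to contract geometrically over blocks of bounded length, hence the diameter tends to $0$. This argument uses the bounded-ratio hypothesis twice---once for the contraction factor and once to force vertex turnover---exactly where your determinant approach lacks traction.
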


Results similar to this theorem are given in \cite{Kerckhoff85} in the context of interval exchange maps. The proof given here is new, as are the applications to multidimensional continued fractions.
 
The converse is not true. There exist sequences $(i_0,i_1,\ldots )$ such that $\triangle(i_0,i_1,\ldots )$ is a single point, but $\frac{x_{n,i}}{x_{n,j}}$ is not bounded. See \cite{GarrityT05} for examples, including the triangle sequences $a_n = n$ and $a_n = n^2$.

Before giving the proof, we will need two lemmas. The first is true for any triangle.
\begin{lemma} \label{dist}
For any triangle $\triangle(i_0,i_1,\ldots ,i_n)$,
the sum $\widehat{X_{n,i} + X_{n,j}}$ is a weighted average of $\hat{X}_{n,i}$ and $\hat{X}_{n,j}$ in the sense that
$$d(\hat{X}_{n, i}, \widehat{X_{n, i} + X_{n, j}}) = \frac{x_{n,j}}{x_{n,i} + x_{n,j}} d(\hat{X}_{n, i}, \hat{X}_{n,j}).$$
\begin{proof}
\begin{align*}
d(\hat{X}_{n, i}, \widehat{X_{n, i} + X_{n, j}}) & = ||\frac{X_{n,i}}{x_{n,i}} - \frac{X_{n,i}+X_{n,j}}{x_{n,i} + x_{n, j}}||
\\ & = || \frac{x_{n,j}}{x_{n,i}(x_{n, i} + x_{n,j})} X_{n, i} -\frac{1}{x_{n, i} + x_{n,j}} X_{n, j}||
\\ & =  \frac{x_{n, j}}{x_{n,i} + x_{n,j}}d(\hat{X}_{n,i},\hat{X}_{n,j}).
\end{align*}
\end{proof}
\end{lemma}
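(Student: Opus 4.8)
The plan is to prove the identity by direct computation, after first making the key observation that the planar distance $d$ between two projected vertices can be rewritten as the Euclidean norm of the difference of the corresponding \emph{normalized} three-dimensional vectors. Concretely, since $\pi(b_0,b_1,b_2) = (b_1/b_0, b_2/b_0)$, we have $\hat{X}_{n,i} = \pi(X_{n,i})$, and because the vertices lie in the cone $\triangle^*$ in the first octant each coordinate $x_{n,i}$ is strictly positive. The normalized vector $\frac{1}{x_{n,i}}X_{n,i} = (1, \frac{y_{n,i}}{x_{n,i}}, \frac{z_{n,i}}{x_{n,i}})$ has first coordinate equal to $1$ for every $i$, so in the difference of two such vectors the first coordinate cancels and $\|\frac{1}{x_{n,i}}X_{n,i} - \frac{1}{x_{n,j}}X_{n,j}\|$ equals exactly the planar distance $d(\hat X_{n,i}, \hat X_{n,j})$. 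The same remark applies to $\widehat{X_{n,i}+X_{n,j}} = \pi(X_{n,i}+X_{n,j})$, whose representing vector has first coordinate $x_{n,i}+x_{n,j}$.

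With this in hand, I would write $d(\hat X_{n,i}, \widehat{X_{n,i}+X_{n,j}})$ as the norm of $\frac{X_{n,i}}{x_{n,i}} - \frac{X_{n,i}+X_{n,j}}{x_{n,i}+x_{n,j}}$ and place the two terms over the common denominator $x_{n,i}(x_{n,i}+x_{n,j})$. The two $X_{n,i}$ contributions partially cancel, leaving $\frac{x_{n,j}}{x_{n,i}(x_{n,i}+x_{n,j})}X_{n,i} - \frac{1}{x_{n,i}+x_{n,j}}X_{n,j}$. Factoring out the scalar $\frac{x_{n,j}}{x_{n,i}+x_{n,j}}$ leaves precisely $\frac{X_{n,i}}{x_{n,i}} - \frac{X_{n,j}}{x_{n,j}}$ inside, and pulling that scalar out of the norm — legitimate exactly because $x_{n,i}, x_{n,j} > 0$ force it to be nonnegative — yields $\frac{x_{n,j}}{x_{n,i}+x_{n,j}}\, d(\hat X_{n,i}, \hat X_{n,j})$, as claimed.

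There is no serious obstacle here: the statement is a one-line algebraic identity dressed in projective-coordinate notation, and its entire content is the numerator cancellation together with the homogeneity of $\pi$. The only points demanding a moment's care are bookkeeping ones — confirming that $d$ of the projected points genuinely equals the norm of the normalized-vector difference, so that the final equality is one of planar distances and not merely of three-dimensional norms, and recording the positivity $x_{n,i}>0$ that lets the scalar be extracted from the norm without an absolute value. I would also note that the identity concerns only a single triangle and its vertices, so it holds for any $\triangle(i_0,\ldots,i_n)$ produced by any combination of TRIP maps, with no appeal whatsoever to the structure of the underlying sequence.
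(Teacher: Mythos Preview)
Your proof is correct and follows essentially the same direct computation as the paper: rewrite the planar distance as the norm of the difference of normalized three-vectors, put over a common denominator, and factor out the scalar $\frac{x_{n,j}}{x_{n,i}+x_{n,j}}$. The only difference is that you spell out the justifications (why $d$ equals the norm of the normalized difference, and why positivity allows the scalar to exit the norm) that the paper leaves implicit.
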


\begin{lemma}\label{change} Let $(i_0,i_1,\ldots )$ be a triangle tree sequence with respect to any combination of maps. Suppose there exists a constant $C$ such that for every $n \geq 0$ and every pair $1 \leq i, j \leq 3$,
$\frac{x_{n,i}}{x_{n,j}} \leq C.$  Then if $X_{n,i}$ is a vertex of $\triangle(i_0,i_1,\ldots ,i_n)$, then it cannot be a vertex of $\triangle(i_0,i_1,\ldots ,i_{n+k})$ for any $k>2C^2.$

  \begin{proof}
  Let $X_{n, 1}$ be a vertex of $\triangle(i_0,i_1,\ldots , i_n)$.  Suppose that $X_{n, 1}$ remains a vertex for any $\triangle(i_0,i_1,\ldots ,i_{n+k})$.  We have that 
  $$\frac{x_{n,1}}{\min(x_{n,1},x_{n,2}, x_{n,3})}\leq C.$$
    Thus the minumum of the $x_{n,1},$ $x_{n, 2},$ and $x_{n, 3}$ is at least $\frac{x_{n,1}}{C}.$
     When we form new subdivisions, there will be a new vertex one of whose terms is the sum of positive multiples of $x_{n,2}$ with positive multiples of $x_{n,3}$.  Thus, if $X_{n, 1}$ remains a vertex for  $\triangle(i_0,i_1,\ldots ,i_{n+k})$, one of the other vertices must have a term that is at least $kx_{n,1}/C$.  Suppose that $k>2C^2.$  Then this new vertex must have a term that is at least $2Cx_{n,1}.$
  Then we have 
  $$2C = \frac{2Cx_{n,1}}{x_{n,1} }\leq \frac{\mbox{term of new vertex}}{x_{n,1}} \leq C,$$
  giving us our contradiction.

  \end{proof}
\end{lemma}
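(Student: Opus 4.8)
The plan is to reduce everything to a single scalar, the sum of the first Cartesian coordinates of the three current vertices, and to show that persistence of $X_{n,1}$ forces this sum to grow by a definite amount at every subdivision, while the boundedness hypothesis caps how large it can ever be; comparing the two bounds will give $k\le 2C^2-2$, contradicting $k>2C^2$.

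First I would record the effect of one subdivision on the vertex triple. Writing $V_m=(\vv_1\ \vv_2\ \vv_3)F_{i_0}\cdots F_{i_m}$, so that the columns of $V_m$ are $X_{m,1},X_{m,2},X_{m,3}$, each further step right-multiplies by some $F_{i_{m+1}}=\sigma A\tau$ with $A\in\{A_0,A_1\}$. Right multiplication by the permutation matrices $\sigma$ and $\tau$ merely reorders columns and hence preserves the multiset of first coordinates; the only arithmetically nontrivial factor is $A$. Using the identities $(W_1\ W_2\ W_3)A_0=(W_2\ W_3\ W_1+W_3)$ and $(W_1\ W_2\ W_3)A_1=(W_1\ W_2\ W_1+W_3)$ for the $\sigma$-reordered vertices $W_1,W_2,W_3$, I observe that every subdivision keeps two current vertices, discards one, and adjoins the new vertex $W_1+W_3$. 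Consequently, if $S_m:=x_{m,1}+x_{m,2}+x_{m,3}$, then $S_{m+1}=S_m+x_{W_3}$ for an $A_0$ step and $S_{m+1}=S_m+x_{W_1}$ for an $A_1$ step; in either case the increment equals the first coordinate of one of the three current vertices.

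Next I would combine persistence with the ratio bound. Since $X_{n,1}$ is a vertex of $\triangle(i_0,\ldots,i_{n+k})$ and the triangles are nested, with $\triangle(i_0,\ldots,i_{m+1})$ realized as one of the two sub-triangles of $\triangle(i_0,\ldots,i_m)$ and a discarded corner lying outside that sub-triangle, $X_{n,1}$ must in fact be a vertex at every intermediate level $m\in\{n,\ldots,n+k\}$, with its first coordinate fixed at $x_{n,1}$. At each such level the hypothesis $x_{m,i}/x_{m,j}\le C$, applied with $x_{n,1}$ as one of the entries, yields $\min_i x_{m,i}\ge x_{n,1}/C$ and $\max_i x_{m,i}\le C\,x_{n,1}$. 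Hence every increment satisfies $S_{m+1}-S_m\ge x_{n,1}/C$. To extract the sharp constant I would pass to $T_m:=S_m-x_{n,1}$, the sum of the first coordinates of the two vertices other than $X_{n,1}$; since $x_{n,1}$ is constant the same increment bound gives $T_{n+k}\ge T_n+k\,x_{n,1}/C\ge(k+2)\,x_{n,1}/C$, using $T_n=x_{n,2}+x_{n,3}\ge 2x_{n,1}/C$. On the other hand each of those two coordinates is at most $C\,x_{n,1}$, so $T_{n+k}\le 2C\,x_{n,1}$. Comparing gives $(k+2)/C\le 2C$, i.e. $k\le 2C^2-2<2C^2$, the desired contradiction.

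The step I expect to be the main obstacle is the first one: checking that for an \emph{arbitrary} Combo TRIP map the flanking permutations $\sigma,\tau$ do not disturb the invariant ``$S$ grows by exactly one current first coordinate,'' and confirming that persistence at level $n+k$ really propagates to every intermediate level. Both points reduce to the explicit column actions of $A_0$ and $A_1$ recorded above together with the observation that permutations preserve the multiset of first coordinates, after which the remaining estimates are routine bookkeeping.
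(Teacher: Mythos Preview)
Your argument is correct and follows essentially the same approach as the paper's: both use the ratio hypothesis to bound the first coordinates of the non-persisting vertices from below (growing linearly in $k$) and from above (by $C x_{n,1}$), then compare. Your bookkeeping via the scalar $T_m=S_m-x_{n,1}$ is a clean way to make rigorous the paper's informal claim that ``one of the other vertices must have a term that is at least $kx_{n,1}/C$,'' and it even yields the slightly sharper $k\le 2C^2-2$.
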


We are now ready to prove the theorem.

\begin{proof}  To prove uniqueness, we will show that the lengths of the sides of  $\triangle(i_0,i_1,\ldots ,i_{n})$ go to zero as $n$ goes to infinity.
 From Lemma  \ref{dist} we know that 
$$d(\hat{X}_{n, i}, \widehat{X_{n, i} + X_{n, j}}) = \frac{x_{n,j}}{x_{n,i} + x_{n,j}} d(\hat{X}_{n, i}, \hat{X}_{n,j}).$$
By assumption, we have 
$$\frac{x_{n,j}}{C} \leq x_{n,i},$$
which means that 
$$ \frac{x_{n,j}}{x_{n,i} + x_{n,j}}  \leq  \frac{x_{n,j}}{\frac{x_{n,j}}{C} +x_{n,j}} =\frac{C}{C+1},$$
which in turns means that 
$$d(\hat{X}_{n, i}, \widehat{X_{n, i} + X_{n, j}}) = \frac{C}{C+1} d(\hat{X}_{n, i}, \hat{X}_{n,j}).$$

   Let  $\hat{X}_{n,1}$, $\hat{X}_{n,2} $ and $\hat{X}_{n,3}$  be the vertices for $\triangle(i_0,i_1,\ldots ,i_n)$.  Without loss of generality, we can assume that the vertices for  $\triangle(i_0,i_1,\ldots ,i_{n+1})$ are  $\hat{X}_{n,1}$, $\hat{X}_{n,2} $ and $\widehat{X_{n,2} + X_{n,3}}$. Then we know
   $$d(\hat{X}_{n, 2}, \widehat{X_{n, 2} + X_{n, 2}}) \leq \frac{C}{C+1} d(\hat{X}_{n, 2}, \hat{X}_{n,3}).$$
   This certainly gives us that, as $n\rightarrow \infty$, the length of one of the sides must approach zero.  
   
   Of course, just because one of the side lengths goes to zero does not mean that all side lengths approach zero.  This will happen, though, if we can show the lengths of  two of the sides go to zero.  Here Lemma \ref{change} becomes important.  The only way for just one side to approach zero is for the vertex opposite that side to not change.  In the above paragraph, this would mean that the vertex $\hat{X}_{n,1}$ of $\triangle(i_0,i_1,\ldots ,i_n)$ must remain a vertex for all subsequent $\triangle(i_0,i_1,\ldots ,i_{n+k}),$ which we have seen is impossible.  Hence we have our result.

\end{proof}

The following Corollary states that for purely periodic sequences, it is sufficient to satisfy the condition once each period.
\begin{cor}
Let $(\overline{i_0,i_1,...,i_l})$ be a purely periodic TRIP tree sequence with respect to any combination of maps. Suppose there exists a constant $C$ and an integer $1 \leq k \leq l$ such that for every $n \geq 0$ and every pair $1 \leq i, j \leq 3,$ $$\frac{x_{nl+k,i}}{x_{nl+k,j}} \leq C.$$
Then the intersection $\triangle(\overline{i_0,i_1,...,i_l})$ is a unique point.
\end{cor}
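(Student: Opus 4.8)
The plan is to reduce the Corollary directly to Theorem \ref{ratios} by showing that the once-per-period bound on the ratios $x_{nl+k,i}/x_{nl+k,j}$ propagates to a uniform bound on $x_{m,i}/x_{m,j}$ for \emph{all} $m$, with a constant $C'$ depending only on $C$ and the (finitely many) matrices appearing in one period. First I would invoke Lemma \ref{init}: since $(\overline{i_0,\ldots,i_l})$ is purely periodic, shifting the sequence by $k$ terms does not change whether $\triangle(\overline{i_0,\ldots,i_l})$ is a single point, so without loss of generality we may assume the hypothesis holds at the indices $m = nl$ for every $n \geq 0$, i.e.\ at the start of each period. So it suffices to control the ratios at the intermediate indices $nl+1, nl+2, \ldots, nl+l-1$ in terms of the ratio at $nl$ and at $(n+1)l$.

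The key observation is that the vertex matrix at step $m+1$ is obtained from that at step $m$ by right-multiplication by $F_{i_{m+1}}$ (one of the finitely many matrices $F_0, F_1$ coming from the combination of maps used), and each such $F$ has nonnegative integer entries with no zero column. Consequently each coordinate $x_{m+1,j}$ is a nonnegative integer combination of the $x_{m,1},x_{m,2},x_{m,3}$, so $x_{m+1,j} \leq (\max\text{-entry of }F)\cdot(x_{m,1}+x_{m,2}+x_{m,3}) \leq 3M \max_i x_{m,i}$, where $M$ bounds the entries of $F_0,F_1$; and since each column of $F$ is nonzero, $x_{m+1,j} \geq \min_i x_{m,i}$. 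Iterating at most $l$ times from index $nl$ to index $nl+r$ (with $0 \le r < l$), we get
\begin{equation}
\frac{\max_i x_{nl+r,i}}{\min_i x_{nl+r,i}} \leq (3M)^{l}\,\frac{\max_i x_{nl,i}}{\min_i x_{nl,i}} \leq (3M)^{l}\, C.
\end{equation}
Thus setting $C' = (3M)^{l} C$ we obtain $x_{m,i}/x_{m,j} \leq C'$ for every $m \geq 0$ and every $i,j$, which is exactly the hypothesis of Theorem \ref{ratios}. Applying that theorem gives that $\triangle(\overline{i_0,\ldots,i_l})$ is a unique point, as desired.

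The only mild subtlety — and the step I would be most careful about — is verifying that the matrices $F_{i_m}$ genuinely have nonnegative entries and no zero column, which is what makes each new $x$-coordinate comparable (up to the bounded factor $3M$) to the previous ones. This is immediate from the construction: each $F_j = \sigma A_j \tau_j$ is a product of the nonnegative matrices $A_0$ or $A_1$ with permutation matrices, hence nonnegative; and $\det F_j = \pm 1$ forces every column to be nonzero. For a general Combo TRIP map the matrices vary, but there are still only finitely many distinct ones appearing over one period, so a single $M$ suffices. Once this is in hand the argument is purely a geometric-series / finite-iteration bookkeeping, and no new ideas beyond Theorem \ref{ratios} are needed.
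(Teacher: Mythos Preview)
Your argument is correct and is essentially the paper's own proof: both propagate the ratio bound forward one step at a time using the fact that each new $x$-coordinate is a nonnegative combination of the old ones (in fact a sum of at most two of them, so the paper gets the slightly sharper constant $2^{l}C$), and then invoke Theorem~\ref{ratios}. Your preliminary shift via Lemma~\ref{init} is harmless but unnecessary---the paper simply iterates directly from the checkpoints $nl+k$.
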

\begin{proof}
Note that if $$\frac{x_{nl+k,i}}{x_{nl+k,j}} \leq C,$$
then $$\frac{x_{nl+k+1,i}}{x_{nl+k+1,j}} \leq 2C,$$
because each $x_{nl+k+1,i}$ is at most the sum of two $x_{nl+k,j}.$ So for each $m$, $$\frac{x_{m,i}}{x_{m,j}} \leq  2^lC.$$
Apply Theorem 7.2.
\end{proof}

\subsection{Uniqueness for Periodic Sequences}\label{uniquenessperiodic}

In the last section we showed that when a  periodic TRIP sequences determines a unique point, the coordinates of that point are rational, quadratic, or cubic.
Unfortunately, not every periodic TRIP sequence determines a unique point.
\begin{ex}
Let $(\sigma, \tau, \tau_1) = (e, (1 2), e)$. Then $F_0$ sends $(v_1,v_2,v_3)$ to $(v_3,v_2,v_1 + v_3)$ and $F_1$ sends $(v_1,v_2,v_3)$ to $(v_1, v_2, v_1 + v_3)$. The vertices of $\triangle(i_0,i_1,\ldots ,i_n)$ are of the form $(a v_1 + bv_3, v_2, c v_1 + dv_3)$, where $a,b,c,d \in \mathbb{Z}$. So the second vertex of $\triangle(i_0,i_1,\ldots ,i_n)$ is the bottom left corner of $\triangle$. The first and third vertices are points on the hypotenuse of $\triangle$ with rational coordinates.

Suppose $\triangle(i_0,i_1,\ldots ,i_n)$ contains any point $(x, y)$. Then $\triangle(i_0,i_1,\ldots ,i_n)$ must contain all points on the line containing $(1, 0)$ and $(x, y)$ that are inside the original triangle. The line segment is given by $$L = \{(ax, (1-a) + ay): a \in \mathbb{R}, 0 \leq ax \leq (1 - a) + ay \}.$$ So if $(x, y)$ is in $\displaystyle \lim_{n \rightarrow \infty} \triangle(i_0,i_1,\ldots ,i_n)$, then $L$ is as well. In particular, every periodic sequence specifies an entire line. Each line must contain points $(\alpha, \beta)$ such that $\alpha$ and $\beta$ are transcendental.

Further, this line need not be the boundary of any of our triangles $\triangle$. The triangle sequence $(1,1,1,\ldots )$ specifies the line segment from $(1,0)$ to $(\alpha, \alpha^2)$, where $\alpha$ is the root of $x^3 -2x^2 -x+1$ in the unit interval. This segment has irrational slope.
\end{ex}
We want to determine when a periodic TRIP sequence determines a unique point. By lemma \ref{init}, it is sufficient to consider purely periodic TRIP sequences.

\begin{lemma}
Suppose theTRIP tree sequence $\triangle(i_0,i_1,\ldots )$ determines a unique point with respect to the permutations $(\sigma, \tau_0, \tau_1).$ Then $\triangle(i_0,i_1,\ldots )$ determines a unique point with respect to $(\rho \sigma, \tau_0 \rho^{-1}, \tau_1 \rho^{-1})$ for every $\rho \in S_3$.
\begin{proof}
The two subdivisions of $\triangle$ are equivalent, up to a permutation of $v_1$, $v_2,$ and $v_3$. The cardinality of $\triangle(i_0,i_1,\ldots )$ does not depend on the choice of initial vertices.
\end{proof}
\end{lemma}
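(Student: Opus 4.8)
The plan is to reduce the whole statement to a single conjugation identity together with the observation that the two subdivisions differ only by a fixed relabeling of the distinguished vertices $\vv_1,\vv_2,\vv_3$. Write $F_0=\sigma A_0\tau_0$ and $F_1=\sigma A_1\tau_1$ for the maps attached to $(\sigma,\tau_0,\tau_1)$, and let $F_0',F_1'$ be the maps attached to $(\rho\sigma,\,\tau_0\rho^{-1},\,\tau_1\rho^{-1})$. First I would record the elementary computation
$$F_0'=(\rho\sigma)A_0(\tau_0\rho^{-1})=\rho(\sigma A_0\tau_0)\rho^{-1}=\rho F_0\rho^{-1},\qquad F_1'=\rho F_1\rho^{-1},$$
so that $F_i'=\rho F_i\rho^{-1}$ for $i=0,1$, and hence, telescoping over any finite word $(i_0,\ldots,i_n)$,
$$F_{i_0}'F_{i_1}'\cdots F_{i_n}'=\rho\,(F_{i_0}F_{i_1}\cdots F_{i_n})\,\rho^{-1}.$$

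Next I would translate this into a statement about the triangles. Put $G_n=F_{i_0}\cdots F_{i_n}$ and $B=(\vv_1\ \vv_2\ \vv_3)$. The vertices of $\triangle(i_0,\ldots,i_n)$ for $(\sigma,\tau_0,\tau_1)$ are the columns of $BG_n$, while for $(\rho\sigma,\tau_0\rho^{-1},\tau_1\rho^{-1})$ they are the columns of $B\rho G_n\rho^{-1}$. Right-multiplication by the permutation matrix $\rho^{-1}$ merely reorders the three columns, so it changes neither the set of vertices nor the triangle. Left-multiplication rewrites $BG_n$ as $(B\rho B^{-1})(BG_n)$, i.e. applies the fixed matrix $P:=B\rho B^{-1}$ to each of the three vertices; since $\rho$ is a permutation matrix and $\det B=1$, we have $P\in GL_3(\Z)$, and concretely $P$ is the linear map with $P\vv_i=\vv_{\rho(i)}$ — exactly the relabeling of the distinguished vertices. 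In particular $P$ carries the cone spanned by $\vv_1,\vv_2,\vv_3$ (the cone over $\triangle$) onto itself, so it descends to a projective automorphism $\bar P$ of $\triangle$ which is a bijection and which sends each $\triangle(i_0,\ldots,i_n)$ for $(\sigma,\tau_0,\tau_1)$ onto the corresponding triangle for the new permutations.

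Finally I would conclude: the triangles $\triangle(i_0,\ldots,i_n)$ are nested, so $\bar P$ carries the nested intersection $\triangle(i_0,i_1,\ldots)$ for $(\sigma,\tau_0,\tau_1)$ bijectively onto the nested intersection for $(\rho\sigma,\tau_0\rho^{-1},\tau_1\rho^{-1})$; being a bijection, it preserves cardinality, so one intersection is a single point exactly when the other is. (If one prefers to avoid projective language, the same argument runs with the simplicial cones $C_n\subset\R^3$ spanned by the vertices: $C_n'=P(C_n)$, hence $\bigcap_n C_n'=P(\bigcap_n C_n)$, and $\triangle(i_0,i_1,\ldots)$ is a single point iff the corresponding nested intersection of cones is one-dimensional, a property visibly preserved by the linear automorphism $P$.) I do not expect a genuine obstacle here; the only point that deserves an explicit word is the verification that $P=B\rho B^{-1}$ (equivalently $\bar P$) is a well-defined bijection on the region we care about, and this is immediate once one notices that $P\vv_i=\vv_{\rho(i)}$, so that $P$ preserves the cone over $\triangle$.
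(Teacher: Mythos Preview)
Your argument is correct and is precisely a fleshed-out version of the paper's two-sentence proof: the paper simply asserts that the two subdivisions are equivalent up to a permutation of $\vv_1,\vv_2,\vv_3$, while you make this explicit via the conjugation identity $F_i'=\rho F_i\rho^{-1}$ and the induced linear automorphism $P=B\rho B^{-1}$ permuting the vertices. (One cosmetic point: depending on the column-permutation convention, $P\vv_i$ may equal $\vv_{\rho^{-1}(i)}$ rather than $\vv_{\rho(i)}$, but this is immaterial to the argument.)
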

We need only classify our maps in the case that $\sigma = e$, where $e \in S_3$ is the identity permutation.
\begin{thm} \label{evs}
Let $(\overline{i_0,i_1,\ldots ,i_l})$ be a purely periodic TRIP tree sequence. The intersection $\triangle(i_0,i_1,\ldots )$ contains a line segment of non-zero length if and only if $(F_{i_0}F_{i_1}\ldots F_{i_l})^2$ has two eigenvectors contained in the triangle $\triangle$.
\begin{proof}
Suppose $(F_{i_0}F_{i_1}\cdots F_{i_l})^2$ has two eigenvectors, $w_1$ and $w_2$, contained in $\triangle.$ Suppose $w$ is on the line segment between $w_1$ and $w_2$. There exists $0 \leq \mu \leq 1$ such that \mbox{$w = \mu w_1 + (1 - \mu) w_2$}. If $\lambda_1$ and $\lambda_2$ are the eigenvalues of $w_1$ and $w_2$, then
$$w(F_{i_0}F_{i_1}\cdots F_{i_l})^{2k} = \lambda_1^{2k} \mu w_1 + \lambda_2^{2k}(1 - \mu) w_2$$
is on the line segment from $w_1$ to $w_2$. 
So $w$ has triangle tree sequence $(\overline{i_0,i_1,..,i_l})$.

Suppose $\triangle(\overline{i_0,i_1,\ldots ,i_l})$ is the line segment from $w_1$ to $w_2$. Because $\overline{i_0,i_1,\ldots ,i_l}$ is purely periodic,
$$\big(\triangle(\overline{i_0,i_1,\ldots ,i_l})\big)F_{i_0}F_{i_1}\cdots F_{i_l} = \triangle(\overline{i_0,i_1,\ldots ,i_l}).$$
Because $F_{i_0}F_{i_1}\cdots F_{i_l}$ is linear, the map takes the line segment from $w_1$ to $w_2$ to the line segment from $w_1F_{i_0}F_{i_1}\cdots F_{i_l}$ to the line segment from $w_2F_{i_0}F_{i_1}\cdots F_{i_l}.$ The map $F_{i_0}F_{i_1}\cdots F_{i_l}$ must fix or permute $w_1$ and $w_2$. In either case, $w_1$ and $w_2$ are eigenvectors of $(F_{i_0}F_{i_1}\cdots F_{i_l})^2$.
\end{proof}
\end{thm}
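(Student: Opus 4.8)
Write $M = F_{i_0}F_{i_1}\cdots F_{i_l}$ and let $K = \triangle(\overline{i_0,i_1,\ldots ,i_l}) = \bigcap_{n\geq 0}\triangle(i_0,i_1,\ldots ,i_n)$ (the periodic extension of the sequence understood), the set of points of $\triangle$ whose TRIP tree sequence is $\overline{i_0,i_1,\ldots ,i_l}$. Two preliminary facts drive the argument. First, $K$ is nonempty, compact and convex: the $\triangle(i_0,i_1,\ldots ,i_n)$ form a nested family of nonempty compact projected triangles, hence of convex sets. Second, let $g$ be the projective transformation induced by $M$ (via the usual change of basis $B$ and transpose), which restricts to a bijection $\triangle(i_0,\ldots ,i_l)\to\triangle$ sending $\triangle(i_0,\ldots ,i_l,j_0,\ldots ,j_m)$ onto $\triangle(j_0,\ldots ,j_m)$; by pure periodicity $g$ maps $K$ bijectively onto $K$, and $g^{-m}(\triangle)=\triangle((i_0,\ldots ,i_l)^{m})$ is a nested family shrinking to $K$. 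The fixed points of $g$ in $\triangle$ are exactly the points of $\triangle$ coming from eigenvectors of $M$, and the fixed points of $g^{2}$ those coming from eigenvectors of $M^{2}$; so the theorem asserts that $K$ contains a non-degenerate segment if and only if $g^{2}$ has two fixed points in $\triangle$.

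($\Leftarrow$) Suppose $w_1,w_2$ are linearly independent eigenvectors of $M^{2}$ with images $\hat w_1\neq\hat w_2$ in $\triangle$ and (positive, since $M^{2}$ has nonnegative entries and $\hat w_i\in\triangle$) eigenvalues $\mu_1,\mu_2$. For $w=\alpha w_1+\beta w_2$ with $\alpha,\beta\geq 0$ we have $wM^{2k}=\mu_1^{k}\alpha w_1+\mu_2^{k}\beta w_2$, again a nonnegative combination of $w_1$ and $w_2$; hence the segment $[\hat w_1,\hat w_2]$ is carried into itself by $g^{2}$, and the pair $\{\hat w_1,\hat w_2\}$ is carried into itself by $g$ (which either fixes each $\hat w_i$ or interchanges them). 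Consequently $g^{m}(\hat w_i)\in\{\hat w_1,\hat w_2\}\subseteq\triangle$ for every $m$; since $g$ is a bijection $\triangle(i_0,\ldots ,i_l)\to\triangle$ (and more generally $g^{m}$ a bijection $\triangle((i_0,\ldots ,i_l)^{m})\to\triangle$) whose global extension is injective, ``$\hat w_i\in\triangle$ and $g^{m}(\hat w_i)\in\triangle$'' forces $\hat w_i\in g^{-m}(\triangle)=\triangle((i_0,\ldots ,i_l)^{m})$ for all $m$. Convexity of each $\triangle((i_0,\ldots ,i_l)^{m})$ then puts $[\hat w_1,\hat w_2]\subseteq\triangle((i_0,\ldots ,i_l)^{m})$ for all $m$, and intersecting gives $[\hat w_1,\hat w_2]\subseteq K$, a segment of non-zero length.

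($\Rightarrow$) Suppose $K$ contains a non-degenerate segment. One first reduces to the case that $K$ is itself a segment $[\hat w_1,\hat w_2]$: since $K$ is a bounded convex set carried onto itself by the projective map $g$, it is at most one-dimensional --- either argue this via the Hilbert-metric contraction of $g$ (it is a Hilbert isometry onto the strictly smaller triangle $\triangle(i_0,\ldots ,i_l)$ composed with the non-expanding inclusion back into $\triangle$, which prevents the nested intersection from having interior), or simply invoke that in the TRIP setting the non-point intersections are segments. Then $g$ restricts to a homeomorphism of $[\hat w_1,\hat w_2]$ onto itself, so it fixes both endpoints or swaps them; in either case $g^{2}$ fixes $\hat w_1$ and $\hat w_2$, i.e.\ $w_1$ and $w_2$ are eigenvectors of $M^{2}$, and they lie in $\triangle$. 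This produces the required pair.

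The step I expect to be the main obstacle is the one in ($\Leftarrow$) upgrading ``$[\hat w_1,\hat w_2]$ is $g^{2}$-invariant'' to ``$[\hat w_1,\hat w_2]\subseteq K$'': a priori an $M^{2}$-invariant segment could poke out of some intermediate triangle $\triangle(i_0,\ldots ,i_n)$ even though its $g$-iterates stay inside $\triangle$. The resolution is the observation that $\{\hat w_1,\hat w_2\}$ is stable as a set under $g$ itself (not merely under $g^{2}$), together with injectivity of $g$, which pins each endpoint inside every $\triangle((i_0,\ldots ,i_l)^{m})$ so that convexity carries the whole segment along. A secondary technical point is ruling out a two-dimensional $K$ in the converse direction; apart from these, everything reduces to the linear-algebra identity $wM^{2k}=\mu_1^{k}\alpha w_1+\mu_2^{k}\beta w_2$ and the bijection $\triangle\cong\triangle(i_0,\ldots ,i_l)$ underlying $g$.
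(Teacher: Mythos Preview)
Your argument follows the same two-part plan as the paper: for ($\Rightarrow$) both of you use that $M=F_{i_0}\cdots F_{i_l}$ carries the intersection $K$ onto itself, hence fixes or swaps the endpoints of the segment, making them eigenvectors of $M^2$; for ($\Leftarrow$) both of you compute $wM^{2k}=\mu_1^{k}\alpha w_1+\mu_2^{k}\beta w_2$ to see the segment is invariant. You are more explicit than the paper about why this invariance actually forces the segment to lie in $K$, and you also discuss why $K$ cannot be two-dimensional, which the paper simply assumes.

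There is one unjustified step in your ($\Leftarrow$): you assert that $g$ itself preserves the pair $\{\hat w_1,\hat w_2\}$ (``either fixes each $\hat w_i$ or interchanges them''), but $w_1,w_2$ are only given as eigenvectors of $M^{2}$, and $M$ need not send $w_1$ into the span of $\{w_1,w_2\}$ when $M^{2}$ has a repeated eigenvalue. The easy repair is to run your own argument with $g^{2}$ in place of $g$: since $g^{2}$ genuinely fixes each $\hat w_i$, you get $g^{2m}(\hat w_i)=\hat w_i\in\triangle$ and hence $\hat w_i\in g^{-2m}(\triangle)=\triangle((i_0,\ldots ,i_l)^{2m})$ for every $m$; nesting of the subtriangles then gives $\hat w_i\in\triangle((i_0,\ldots ,i_l)^{n})$ for every $n$, so $\hat w_i\in K$ and convexity finishes. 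With this adjustment your proof is complete and is effectively the paper's argument with the details filled in.
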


\begin{Proposition}
Suppose $(\sigma, \tau_0, \tau_1)$ is contained in the list
\[
\begin{array}{llll}
(e, (1\,2), e),& (e, (1\,2), (1\,2)),& (e, (1\,2), (2\,3)),& (e, (1\,2) , (1\,3\,2)), \\
(e, (1\,2), (1\,2\,3)),& (e, (1\,2), (1\,3)),& (e, (1\,2\,3), (1\, 3)),& (e, e,(1\,3)), \\
(e,(1\,3\,2),(1\,3)),& (e, (2\,3), (1\,3)),& (e, (1\,3), (1\,3)),& (e, e, (2\,3)), \\
(e, e, (1\,2)),& (e, (2\,3), (1\,2\,3)),& (e, (1\,2\,3),e),& (e, (1\,3), (1\,2\, 3)), \\
(e, (1\,3), (2\,3)),& (e, e, (1\,2\,3)),& (e,(1\,3),e),& (e, (1\,3), (1\,2)), \\
(e,(1\,3\,2), (1\,2)),& (e, (1\,2\,3), (1\,2)), & (e,(1\,3),(1\,3\,2)), & (e,(2\,3),(1\,2)), \\
(e,e,(1\,3\,2), & (e, (1\,3\,2),(1\,2\,3)).
\end{array}
\]
There exists a purely periodic TRIP sequence $(\overline{i_0,i_1,\ldots ,i_l})$ such that $\triangle(\overline{i_0,i_1,\ldots ,i_l})$ is a line segment with respect to $(\sigma, \tau_0, \tau_1)$. This line segment is not contained in the boundary of $\triangle$.
\begin{proof}
Using Proposition \ref{evs}, we can exhibit specific examples of such sequences in the above cases. 
For each of the  cases $(e, (1\,2), e)$,  $(e, (1\,2), $ $(1\,2)), $ $  (e, (1\,2),$ $ (2\,3)), $ $  (e, (1\,2) , $ $(1\,3\,2)),$ $ (e, (1\,2),$ $  (1\,2\,3)), $ $ (e, (1\,2), $ $ (1\,3))$. the matrix $F_0$ has the two desired eigenvectors in $\triangle$ and hence for all of theses cases there is an entire line seqment with   sequence $(\overline{0})$.  For each of the cases $(e, (1\,2\,3)$ , $(1\, 3)), (e, e,(1\,3))$, 
$(e,(1\,3\,2),(1\,3)), $ $(e, (2\,3), (1\,3)),  (e, (1\,3), (1\,3))$, the matrix $F_1$ has the two desired eigenvectors in $\triangle$ and hence for all of theses cases there is an entire line seqment with   sequence $(\overline{1})$.    For each of the cases $(e, e, (2\,3)),$ $(e, e, (1\,2)),$ $(e, (2\,3), (1\,2\,3))$ and $(e, (1\,2\,3), e$,  the matrix $F_0F_1$ has the two desired eigenvectors in $\triangle$ and hence for all of theses cases there is an entire line seqment with   sequence $(\overline{0,1})$.  For each of the two  cases $(e, (1\,3), (1\,2\,3))$ and $(e, (1\,3), (2\,3)),$ the matrix $F_0^3F_1$ has the two desired eigenvectors in $\triangle$ and hence for both of theses cases there is an entire line seqment with   sequence $(\overline{0,0,0,1})$.  For each of the three  cases $(e, e, (1\,2\,3))$, $(e, (1\,3), e)$ and $(e, (1\,3), (1\,2))$,  the matrix $F_0F_1^2F_0$ has the two desired eigenvectors in $\triangle$ and hence for these three cases there is an entire line seqment with   sequence $(\overline{0,1,1,0})$.  For the  case $(e, (1\,3\,2), (1\,2))$, the matrix $F_1F_0^2$ has the two desired eigenvectors in $\triangle$ and hence there is an entire line seqment with   sequence $(\overline{1,0,0})$. 
For each of the two  cases $(e, (1\,2\,3), (1\,2))$ and $(e, (1\,3), (1\,3\,2)),$ the matrix $F_1^2F_0$ has the two desired eigenvectors in $\triangle$ and hence for both of theses cases there is an entire line seqment with   sequence $(\overline{1,1,0})$.
For the  case $(e, (2\,3), (1\,2))$, the matrix $F_1^3F_0$ has the two desired eigenvectors in $\triangle$ and hence there is an entire line seqment with   sequence $(\overline{1,1,1,,0})$. 
For the  case $(e, e, (1\,3\,2))$, the matrix $F_1^2F_0^2F_1F_0$ has the two desired eigenvectors in $\triangle$ and hence there is an entire line seqment with   sequence $(\overline{1,1,0,0,1,0})$. 
Finally, for the  case $(e, (1\,3\,2), (1\,2\,3))$, the matrix $F_0^2F_1^2F_0F_1$ has the two desired eigenvectors in $\triangle$ and hence there is an entire line seqment with   sequence $(\overline{0,0,1,1,0,1})$. 

For all of these, the proofs are just calculations.

 \end{proof}
\end{Proposition}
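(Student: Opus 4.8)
The plan is to deduce everything from Theorem~\ref{evs}, which reduces the statement to a bounded amount of linear algebra --- essentially one eigenvector computation per triplet. Fix $(\sigma,\tau_0,\tau_1)$ on the list, put $F_0=\sigma A_0\tau_0$ and $F_1=\sigma A_1\tau_1$, and for a purely periodic word $\overline{i_0,i_1,\ldots,i_l}$ set $M=F_{i_0}F_{i_1}\cdots F_{i_l}$. By Theorem~\ref{evs}, $\triangle(\overline{i_0,\ldots,i_l})$ contains a line segment of positive length exactly when $M^2$ has two eigenvectors lying in $\triangle$, and the argument proving that theorem moreover realizes $\triangle(\overline{i_0,\ldots,i_l})$ as the chord joining those two eigenvectors (regarded as points of $\triangle$). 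So it is enough, for each of the $26$ triplets, to produce one short periodic word whose matrix $M$ (equivalently $M^2$) has two eigenvectors in $\overline{\triangle}$ --- necessarily the eigenvectors belonging to the two eigenvalues of largest modulus --- and a third eigenvector lying strictly outside $\overline{\triangle}$, and then to check that the resulting chord is not contained in $\partial\triangle$.

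The witnessing words fall into a few families. First I would use $M=F_0$, with sequence $\overline{0}$, for the six triplets with $\tau_0=(1\,2)$; $M=F_1$, with sequence $\overline{1}$, for the five triplets using that period (all of the form $(e,\tau_0,(1\,3))$ with $\tau_0\neq(1\,2)$); $M=F_0F_1$, with sequence $\overline{0,1}$, for the four triplets $(e,e,(2\,3)),(e,e,(1\,2)),(e,(2\,3),(1\,2\,3)),(e,(1\,2\,3),e)$; and the seven longer words $F_0^3F_1,\ F_0F_1^2F_0,\ F_1F_0^2,\ F_1^2F_0,\ F_1^3F_0,\ F_1^2F_0^2F_1F_0,\ F_0^2F_1^2F_0F_1$ for the remaining eleven triplets, exactly as tabulated in the statement. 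These words are found by a short search; the heuristic reason they suffice precisely for these triplets is that for them the subdivision fails to be strictly contracting --- e.g. one of $F_0$, $F_1$, $F_0F_1$, \dots\ already fixes a vertex of $\triangle$ while acting hyperbolically on the opposite edge, so iterating it collapses $\triangle$ onto a chord rather than onto a point.

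For each such $M$ I would then compute its characteristic polynomial (a monic integer cubic, since $M\in SL(3,\Z)$), its eigenvalues, and its eigenvectors; in every case the eigenvalues are rational or quadratic or cubic irrationals, so the eigenvectors are available in closed form. The verification is: exactly two eigenvectors $w_1,w_2$ lie in $\overline{\triangle}$, they are the eigenvectors of the two eigenvalues of largest modulus, and the third eigenvector lies strictly outside $\overline{\triangle}$ --- which, by the proof of Theorem~\ref{evs}, forces $\triangle(\overline{i_0,\ldots,i_l})$ to equal the segment $[w_1,w_2]$ rather than merely contain it. Finally, from the explicit coordinates of $w_1$ and $w_2$ I would confirm that these two points do not both lie on a single one of the three edges $\{y=0\}$, $\{x=1\}$, $\{y=x\}$ of $\triangle$, equivalently that the line through them is none of those three lines; this is exactly the claim that $[w_1,w_2]\not\subseteq\partial\triangle$.

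I do not expect any conceptual difficulty beyond Theorem~\ref{evs} itself; the real cost is organizational --- $26$ small but genuinely different eigenvector computations, several involving irrational eigenvalues --- and the main point to be careful about is confirming, in every case, that the two relevant eigenvectors truly lie in the closed triangle and span a chord through its interior, so that the conclusion is a genuine one-dimensional segment and not a vertex of $\triangle$ or an edge of it. One could streamline the bookkeeping by grouping triplets according to which short word carries the degenerate eigenstructure and by invoking Lemma~\ref{init} to strip off finite prefixes, but for the statement as posed the direct case-by-case verification is all that is needed --- which is the content of the remark that the proofs are just calculations.
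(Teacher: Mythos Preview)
Your proposal is correct and follows essentially the same route as the paper: invoke Theorem~\ref{evs} and, for each of the $26$ triplets, exhibit the same short periodic word whose associated matrix has two eigenvectors in $\triangle$, leaving the rest to direct computation. If anything, you are a bit more explicit than the paper about what those computations must verify (that the third eigenvector lies outside $\overline{\triangle}$, and that the resulting chord is not contained in $\partial\triangle$), but the strategy and the case-by-case witnesses are identical.
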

For all of these cases there should be  many other periodic sequences  for which we do not have uniqueness. It would be interesting to be able to predict from the sequence whether or not we get periodicity.

\begin{ex}
Let $(\sigma, \tau_0, \tau_1) = (e, e, (1\ 2)).$ Then $F_0$ and $F_1$ send $(\vv_1,\vv_2,\vv_3)$ to $(\vv_2,\vv_3,\vv_1 + \vv_3)$ and $(\vv_2,\vv_1,\vv_1 + \vv_3)$ respectively. The matrix $F_1F_0$ has eigenvectors
\[\vv_1 \quad\text{and}\quad \frac{1}{2}(3 + \sqrt{5})\vv_1 + \frac{1}{2}(-1 + \sqrt{5})\vv_2 + \vv_3.\]
The intersection $\triangle(\overline{1, 0})$ contains the line segment between these two eigenvectors.
\end{ex}
For the remaining maps, periodic sequences do specify unique points.
\begin{Proposition}
Suppose $(\sigma, \tau_0, \tau_1)$ is contained in the list
\[
\begin{array}{llll}
(e,e,e),& (e, (1\,2\,3), (1\,2\,3)),& (e, (2\,3), e),& (e, (1\,2\,3), (2\,3)), \\
(e, (2\,3), (2\,3)),& (e, (2\,3), (1\,3\,2)),& (e, (1\,3\,2), (2\,3) ),& (e, (1\,3\,2), (1\,3\,2)), \\
(e, (1\,3\,2), e),& (e, (1\,2\,3), (1\,3\,2)).
\end{array}
\]
Choose any purely periodic TRIP sequence $(\overline{i_0,i_1,\ldots ,i_l})$ such that $i_0,i_1,\ldots ,i_l$ are not all $0$ and not all $1$. The intersection $\triangle(\overline{i_0,i_1,\ldots ,i_l})$ is a unique point with respect to $(\sigma, \tau_0, \tau_1)$. This point is of the form $(1, \alpha, \beta)$, where $\alpha$ and $\beta$ are contained in the same quadratic or cubic number field. The intersections $\triangle(\overline{0})$ and $\triangle(\overline{1})$ are unique points or edges of $\triangle$, depending on the choice of $(\sigma, \tau_0, \tau_1)$.
\begin{proof}
 For the following maps, for every purely periodic triangle tree sequence $(\overline{i_0,i_1,...,i_l})$, then $\triangle(\overline{i_0,i_1,...,i_l})$  is a unique point (or an edge of the original $\triangle$). Suppose $(\overline{a_0,....,a_m})$ is the TRIP sequence corresponding to $(\overline{i_0,...,i_l})$. Recall that each $a_i$ counts the number of repetitions of $F_1$ between each application of $F_0$. Let $(x_{k,1},x_{k,2},x_{k,3})$ be the first coordinates of the vertices of the triangle $\triangle(a_0,a_1,...,a_k)$. By Theorem 6.2, it is sufficient to show that the ratios between the $x_i$ are bounded by some constant $C$ independent of $n$.

\begin{itemize}
\item The permutations $(e,e,e)$ give matrices
$$F_0 =  \begin{pmatrix} 0 & 0 &1 \\1 &0&0 \\0& 1&1 \end{pmatrix} \text{ and }F_1 = \begin{pmatrix}1& 0&1  \\ 0&1&0 \\0& 0 & 1 \end{pmatrix}.$$
Recall that $\triangle(a_0,...,a_{k+1})$ is obtained by applying $F_1^{a_k}F_0$ to $\triangle(a_0,...,a_k)$.
Because $$F_1^{a_k}F_0 = \begin{pmatrix} 0 & a_k & a_k+1 \\ 1 & 0 & 0 \\ 0 & 1 & 1\end{pmatrix},$$
we have recursion relations
\begin{align*}(x_{k+1,1},x_{k+1,2},x_{k+1,3}) &= F_1^{a_n}F_0(x_{k,1},x_{k,2},x_{k,3}) \\&= (x_{k,2}, a_k x_{k,1} + x_{k,3}, (a_k + 1)x_{k,1} + x_{k,3}).\end{align*}

First, we will show by induction that $x_{k,1} \leq x_{k,2} \leq x_{k,3}$ for each $k \geq 0$. Because $(x_{1,1},x_{1,2},x_{1,3}) = (1,1,1)$, the base case is clear. Suppose the inequalities hold for $k$. Since the $x_{k,i}$ are all positive, $(a_n + 1)x_{k,1 }+ x_{k,3} > a_n x_{k,1} + x_{k,3}.$ Substitution gives $x_{k+1,3} >x_{k+1,2}$. So the inequality $$x_{k,2} \leq x_{k,3}$$holds for all $k$. Since $x_{k+1,1} = x_{k,2}$ and $x_{k+1,2} \geq x_{k,3}$, it follows that $$x_{k+1,1}\le x_{k+1,2}.$$ The inequalities are established for all $k$.

For each $k$, we claim that the ratios between the $x_{k+1,i}$ are bounded by $2a_k + 4$. The recursion relations give $$\frac{x_{k+1,3}}{x_{k+1,3}} = \frac{(a_k + 1)x_{k,1} + x_{k,3}}{a_kx_{k,1} + x_{k,3}}.$$Because the $x_{k,3}$ terms dominate for small $a_k$, the ratio between $x_{k+1,3}$ and $x_{k+1,2}$ satisfies the bounds: $$1 \leq \frac{(a_k + 1)x_{k,1} + x_{k,3}}{a_kx_{k,1} + x_{k,3}}\le 2.$$
Because $x_{k,2} \leq x_{k,3}$ for $k > 0$, the ratio $$\frac{x_{k+1,2}}{x_{k+1,1}} = \frac{a_kx_{k,1} + x_{k,3}}{x_{k,2}}$$between $x_{k+1,2}$ and $x_{k+1,1}$ satisfies the bounds: $$1 \leq \frac{a_k x_{k,1} + x_{k,3}}{x_{k,2}} \leq a_k + 2.$$
Because the $a_k$ are bounded for a purely periodic sequence, the hypothesis of Theorem 6.2 is satisfied. The remaining case is the sequence is $(\overline{1})$, which defines the lower edge of the triangle.

\item The permutations $(e,(1\,2\,3),(1\,2\,3))$ give the matrices 
$$F_0 = \begin{pmatrix} 1 & 0 & 0 \\ 0 &1 & 0 \\1 & 0 & 1 \end{pmatrix} \text{ and } F_1 = \begin{pmatrix} 1 & 1 & 0 \\ 0 & 0 &1 \\ 1& 0 & 0\end{pmatrix}.$$
If we change from the basis given by $(1,0,0), (0,1,0),$ and $(0,0,1)$ to the basis given by $(0,0,1), (0,1,0),$ and $(1,0,0)$, then $F_0$ and $F_1$ are given by
$$F_0 = \begin{pmatrix}1& 0&1  \\ 0&1&0 \\0& 0 & 1 \end{pmatrix} \text{ and }F_1=  \begin{pmatrix} 0 & 0 &1 \\1 &0&0 \\0& 1&1 \end{pmatrix}.$$
These are the matrices the $F_1$ and $F_0$ for  the permutations $(e,e,e)$, respectively, so the proof proceeds in the same way.

\item The permutations $(e,(2\,3),e)$ give matrices$$F_0 = \begin{pmatrix} 0 & 1 & 0 \\ 1 & 0 & 0 \\ 0 & 1  & 1 \end{pmatrix} \text{ and } F_1 =\begin{pmatrix} 1 & 0 & 1 \\ 0 & 1 & 0 \\ 0 & 0 & 1 \end{pmatrix}$$
Because $$F_1^{a_k}F_0 = \begin{pmatrix} 0 & a_k + 1 & a_k \\ 1 & 0 & 0 \\ 0 & 1 & 1 \end{pmatrix},$$
the recursion relations give $$(x_{k+1,1},x_{k+1,2},x_{k+1,3}) = (x_{k,2}, (a_k+1)x_{k,1}+x_{k,3},a_kx_{k,1}+x_{k,3}).$$
Because $x_{k,1}$ is always positive, $$x_{k+1,2}  = (a_k + 1)x_{k,1} + x_{k,3}> a_k x_{k,1} + x_{k,3} = x_{k+1,3}$$for each $k$. We claim that $x_{k,2} \geq x_{k,1}$ and $x_{k,1}+x_{k,3} \ge x_{k,2}$ as well. The proof is by induction. If $x_{k,2} \geq x_{k,1}$, then $$x_{k+1,1}+x_{k+1,3} = a_kx_{k,1} +x_{k,2}+x_{k,3} \geq(a_k+1)x_{k,1}+x_{k,3} = x_{k+1,2}.$$
Therefore, 
$$x_{k+2,2} = (a_{k+1}+1)x_{k+1,1} + x_{k+1,3}   \geq  x_{k+1,1} + x_{k+1,3}   \geq x_{k+1,2} =x_{k+2,1}.$$
Because the inequalities hold for $k = 0$ and $k = 1$, we have shown $x_{k,2} \geq x_{k,1}$ and $x_{k,1}+x_{k,3} \ge x_{k,2}$ for all $k$.

Suppose $a_k \neq 0$. Combining these inequalities, the ratio
$$\frac{x_{k+2,2}}{x_{k+1,1}} = \frac{(a_k+1)x_{k,1}+x_{k,3}}{x_{k,2}}$$
satisfies the bounds
$$1 \leq \frac{(a_k+1)x_{k,1}+x_{k,3}}{x_{k,2}} \leq a_k + 2.$$
The ratio
$$\frac{x_{k+2,2}}{x_{k+2,3}} = \frac{(a_k+1)x_{k,1}+x_{k,3}}{a_kx_{k,1}+x_{k,3}}$$
is bounded by
$$1 \leq \frac{(a_k+1)x_{k,1}+x_{k,3}}{a_kx_{k,1}+x_{k,3}} \leq a_k + 2$$
as well. So the ratios between the $x_{k,i}$ are bounded by $a_k +2$ whenever $a_k \neq 0$. There is a non-zero $a_k$ each period except for the sequences $(\overline{0})$ and $(\overline{1})$. So the hypothesis of Corollary 7.1 is satisfied. The remaining sequences are $(\overline{0}),$ which is the midpoint of the right edge, and $(\overline{1})$, which is the lower edge.

\item The matrices for $(e,(1\,2\,3),(2\,3))$ are $$F_0 = \begin{pmatrix} 1 & 0 & 0 \\ 0 & 1 & 0 \\ 1 & 0 & 1 \end{pmatrix} \text{ and }F_1= \begin{pmatrix} 1 & 1 & 0 \\ 0 & 0 & 1 \\ 0 & 1 & 0 \end{pmatrix}.$$ If we change from the basis given by $(1,0,0), (0,1,0),$ and $(0,0,1)$ to the basis given by $(0,0,1), (0,1,0),$ and $(1,0,0)$, then these are the matrices $F_1$ and $F_0$ for the permutations $(e,(2\,3),e)$.

\item The matrices for $(e,(2\,3),(2\,3))$ are
$$F_0 = \begin{pmatrix} 0 & 1 & 0 \\ 1 & 0 & 0 \\ 0 & 1 & 1 \end{pmatrix} \text{ and } F_1 = \begin{pmatrix} 1 & 1 & 0 \\ 0 & 0 & 1 \\ 0 & 1 & 0  \end{pmatrix} .$$
We can consider the map given by exchanging $F_0$ and $F_1$ instead. For this map, multiplying gives $$F_1^{a_k}F_0 = \begin{pmatrix} 1&1&0 \\0&0&1 \\ \frac{a_k}{2} & \frac{a_k+2}{2} & \frac{a_k}{2} \end{pmatrix} \text{ for $a_k$ even}, \begin{pmatrix} 0 & 0 & 1 \\ 1 & 1 & 0 \\ \frac{a_k - 1}{2} & \frac{a_k+1}{2} & \frac{a_k + 1}{2}\end{pmatrix} \text{ for $a_k$ odd}.$$
So applying $F_0$ $a_k$ times followed by $F_1$ gives $$(x_{k+1,1},x_{k+1,2},x_{k+1,3}) = (\frac{a_k}{2}x_{k,3} + x_{k,1}, \frac{a_k+2}{2}x_{k,3} + x_{k,1}, \frac{a_k}{2}x_{k,3} + x_{k,2})$$ for $a_k$ even and $$(x_{k+1,1},x_{k+1,2},x_{k+1,3})=(\frac{a_k-1}{2}x_{k,3} + x_{k,2}, \frac{a_k+1}{2}x_{k,3} + x_{k,2}, \frac{a_k+1}{2}x_{k,3} + x_{k,1})$$ for $a_k$ odd.

It is clear from these formulas that $x_{k,2} \geq x_{k,1}$ for all $k$. So $x_{k+1,3} \geq x_{k+1,1}$ for all $k$ as well. Therefore, $x_{k,3} \geq x_{k,1}$ for all $k$. For $a_{k+1}$ odd, it follows that $x_{k+1,2} \geq x_{k+1,3}$.

We claim that $x_{k,2} \geq x_{k,3}$ and $x_{k,1} + x_{k,3} \geq x_{k,2}$ for each $k$. The proof is by induction, with the base case clear. If the result holds for $k$ and $a_k$ is even,
$$x_{k+1,2} = \frac{a_k+2}{2}x_{k,3}+x_{k,1} \geq \frac{a_k}{2}x_{k,3} + x_{k,2} = x_{k+1,3}$$
because $x_{k,1} + x_{k,3} \geq x_{k,2}.$ Also,
$$x_{k+1,1}+x_{k+1,3} = a_kx_{k,3} + x_{k,1} + x_{k,2} \geq \frac{a_k+2}{2} x_{k,3}+x_{k,1} = x_{k+1,2}$$
because $x_{k,2} \geq x_{k,3}.$
If $a_k$ is odd, 
$$x_{k+1,2} = \frac{a_k+1}{2}x_{k,3} + x_{k,2} \geq \frac{a_k+1}{2}x_{k,3}+ x_{k,1} = x_{k+1,3}$$
because $x_{k,2} \geq x_{k,1}$. Further,
$$x_{k+1,1}+x_{k+1,3} = x_{k,1}+x_{k,2} +a_kx_{k,3}  \ge x_{k,2} +\frac{a_k+1}{2} x_{k,3} = x_{k+1,2}.$$
So the result is proved, and $$x_{k,2} \geq x_{k,3} \geq x_{k,1}.$$

Suppose $a_k \neq 0$. If $a_k$ is even, then
$$1 \leq \frac{x_{k+1,2}}{x_{k+1,1}} = \frac{\frac{a_k+2}{2}x_{k,3} + x_{k,1}}{\frac{a_k}{2}x_{k,3}+x_{k,1}}\leq 2.$$
If $a_k$ is odd,
$$1 \leq \frac{x_{k+1,2}}{x_{k+1,1}} = \frac{\frac{a_k+1}{2} x_{k,3} + x_{k,1}}{\frac{a_k-1}{2}x_{k,3} + x_{k,2}} \leq 2.$$
%
%
So the ratios between the $x_{k+1,i}$ are bounded whenever $a_k \neq 0$, which is the case every period if it is ever the case. For all sequences other than $(\overline{0})$ and $(\overline{1})$, apply Corollary 7.1. These two sequences correspond to the top right and bottom left vertices of the triangle, respectively.

\item The permutation $(e,(2\,3), (1\,3\,2))$ gives matrices
$$F_0 = \begin{pmatrix}0&1&0\\1&0&0\\0&1&1 \end{pmatrix} \text{ and } F_1 = \begin{pmatrix} 0 & 1 & 1 \\ 1 & 0 & 0 \\ 0 & 1 & 0 \end{pmatrix}.$$
The permutation $(e,(1\,3\,2),(2\,3))$ gives matrices
$$F_0 = \begin{pmatrix}0&1&0\\0&0&1\\1&1&0 \end{pmatrix} \text{ and } F_1 = \begin{pmatrix} 1 & 1 & 0 \\ 0 & 0 & 1 \\ 0 & 1 & 0 \end{pmatrix}.$$
The matrices for $(e,(1\,3\,2),(1\,3\,2))$ are 
$$F_0 = \begin{pmatrix}0&1&0\\0&0&1\\1&1&0 \end{pmatrix} \text{ and } F_1 = \begin{pmatrix} 0 & 1 & 1 \\ 1 & 0 & 0 \\ 0 & 1 & 0 \end{pmatrix}.$$
We claim that each of these three maps gives the same partition of the triangle as $(e,(2\,3),(2\,3))$. For the base case, the new vertex of $\triangle(i_0)$ is on the edge between $(1,0,0)$ and $(1,1,1)$ for each of these maps. For the inductive step, $v_{k-1,2}$ is the unique vertex of $\triangle(i_0,...,i_{k-1})$ which was not a vertex of $\triangle(i_0,...,i_{k-2})$. Because the new vertex of $\triangle(i_0,...,i_k)$ is $v_{k-1,1} + v_{k-1,3}$ for either choice of $i_k$, and this is the sum of the two vertices that were not new at the previous step, each map gives the same partition. This reduces the verification for the three maps to the verification for $(e,(2\,3),(2\,3))$.


%
%
%

\item The matrices for $(e,(1\,3\,2),e)$ are 
$$F_0 = \begin{pmatrix} 0 & 1 & 0 \\ 0 & 0 & 1 \\ 1 & 1 & 0  \end{pmatrix} \text{ and } F_1= \begin{pmatrix} 1 & 0 & 1 \\ 0 & 1 & 0 \\ 0 & 0 & 1 \end{pmatrix}.$$
Since $$F_1^{a_k}F_0 = \begin{pmatrix} a_k & a_k + 1 & 0 \\ 0 & 0 & 1 \\ 1 & 1 & 0\end{pmatrix},$$ the recursion relation is 
\begin{align*}(x_{k+1,1}, x_{k+1,2},x_{k+1,3}) &= F_1^{a_k}F_0(x_{k,1},x_{k,2},x_{k,3}) \\&= (a_kx_{k,1} + x_{k,3},(a_k+1)x_{k,1} + x_{k,3},x_{k,2}).\end{align*}
It is easy to see that $x_{k+1,2} \geq x_{k+1,1}$, so $x_{k,2} \geq x_{k,1}$ for all $k$. Therefore, $$x_{k+1,1} + x_{k+1,3}= a_kx_{k,1} + x_{k,2} + x_{k,3} \geq (a_k + 1)x_{k,1} +x_{k, 3} =  x_{k+1,2}$$ as well.

Suppose $a_k \neq 0$. Then
$(x_{k+2,1}, x_{k+2,2}, x_{k+2,3})$ is equal to $$(a_{k+1}a_kx_{k,1} + x_{k,2} + a_{k+1}x_{k.3},(a_{k+1}+1)a_kx_{k,1} + x_{k,2} + (a_{k+1}+1)x_{k,3}, (a_k + 1)x_{k,1} + x_{k,3}).$$
If $a_{k+1} \neq 0$ as well, then comparing each $x_{k+2,i}$ to $x_{k+2,1} + x_{k+2,2} + x_{k+2,3}$ shows that their ratios are bounded above by $2 a_k (a_{k+1}+1)$. Because the sequence is periodic, the $a_k$ are bounded, so this is sufficient. If $a_{k+1} = 0$, these coordinates are
$$(x_{k,2}, a_k x_{k,1} + x_{k,2} + x_{k,3}, (a_k + 1)x_{k,1} + x_{k,3}).$$
So $(x_{k+3,1},x_{k+3,2},x_{k+3,3})$ is
$$(a_{k+2}x_{k,2} + (a_k+1)x_{k,1} + x_{k,3}, (a_k + 1)x_{k,1} + (a_{k+2} + 2)x_{k,2} + x_{k,3}, a_kx_{k,1} + x_{k,2} + x_{k,3}).$$
Comparing these entries to $x_{k,1}+x_{k,2}+x_{k,3}$ shows that the ratios between the entries are bounded by $2(a_k a_{k+2}+2)$ for any $a_{k+2}$. Because the series is periodic, Corollary 7.1 gives the result for all sequences except $(\overline{0})$ and $(\overline{1})$. The two exceptions define a single point and the bottom edge of $\triangle$, respectively.
\item The permutation $(e, (1\,2\,3), (1\,3\,2)$ is given by $$F_0 = \begin{pmatrix} 1 & 0 & 0 \\ 0 & 1 & 0 \\ 1 & 0 & 1\end{pmatrix} \text{ and } F_1= \begin{pmatrix} 0 & 1 & 1 \\ 1 & 0 & 0 \\ 0 & 1 & 0\end{pmatrix} .$$ If we change from the basis given by $(1,0,0), (0,1,0),$ and $(0,0,1)$ to the basis given by $(0,0,1), (0,1,0),$ and $(1,0,0)$, then we obtain the matrices $F_1$ and $F_0$ for the permutations $(e,(1\,3\,2),e)$. So the verification is the same.
\end{itemize}
\end{proof}
\end{Proposition}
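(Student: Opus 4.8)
The plan is to reduce each of the ten cases to the boundedness hypothesis of Theorem~\ref{ratios} (or, when the bound is only visible once per period, of its purely periodic corollary), and then to read off the arithmetic conclusion from the eigenvector argument already used in Section~\ref{periodicity}. By Lemma~\ref{init} it suffices to treat a purely periodic sequence $(\overline{i_0,i_1,\ldots,i_l})$, and since all ten triplets have $\sigma=e$ the relabeling lemma is needed only to save work, not for the reduction itself.

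First I would use the change of basis exchanging $\vv_1$ and $\vv_3$, i.e. conjugation by the permutation matrix of $(1\,3)$. On the listed triplets this carries the pair $(F_0,F_1)$ to the pair $(F_1,F_0)$ of another triplet: it matches $(e,(1\,2\,3),(1\,2\,3))$ with $(e,e,e)$, matches $(e,(1\,2\,3),(2\,3))$ with $(e,(2\,3),e)$, and matches $(e,(1\,2\,3),(1\,3\,2))$ with $(e,(1\,3\,2),e)$; and a direct comparison of the induced partitions places $(e,(2\,3),(1\,3\,2))$, $(e,(1\,3\,2),(2\,3))$ and $(e,(1\,3\,2),(1\,3\,2))$ on the same footing as $(e,(2\,3),(2\,3))$. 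Because the cardinality of $\triangle(i_0,i_1,\ldots)$ is insensitive to the labeling of the initial vertices and the family of admissible sequences (all but $(\overline 0)$ and $(\overline 1)$) is symmetric under $0\leftrightarrow 1$, uniqueness for all of them for one member of such a pair is equivalent to uniqueness for the other. The list thus collapses to the four representatives $(e,e,e)$, $(e,(2\,3),e)$, $(e,(2\,3),(2\,3))$, $(e,(1\,3\,2),e)$.

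For each representative I would write $F_0,F_1\in SL(3,\Z)$ explicitly, compute $F_1^{a}F_0$ in closed form as a function of the partial quotient $a\ge 0$ (for $(e,(2\,3),(2\,3))$ this splits by the parity of $a$, and there it is cleaner to pass to the conjugate map that exchanges $F_0$ and $F_1$), and extract from it the recursion for the triple of first coordinates $(x_{k,1},x_{k,2},x_{k,3})$ of the vertices of $\triangle(a_0,\ldots,a_k)$, where $(\overline{a_0,\ldots,a_m})$ is the TRIP sequence attached to $(\overline{i_0,\ldots,i_l})$. The heart of the matter — and the step I expect to be the main obstacle — is to guess, case by case, the right system of linear inequalities among $x_{k,1},x_{k,2},x_{k,3}$ (a total order such as $x_{k,1}\le x_{k,2}\le x_{k,3}$ together with triangle-type inequalities such as $x_{k,1}+x_{k,3}\ge x_{k,2}$) that is simultaneously preserved by every recursion map $F_1^{a}F_0$, $a\ge 0$, and strong enough to bound each ratio $x_{k,i}/x_{k,j}$ by a quantity depending only on the partial quotients. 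Since the sequence is periodic, the $a_k$ take finitely many values, so any such bound is automatically uniform in $k$. Verifying the inequalities is a finite induction in each case, but for $(e,(1\,3\,2),e)$ the bound only emerges after composing two or three steps (one must pass through a possible $a_{k+1}=0$), and for $(e,(2\,3),e)$ and $(e,(2\,3),(2\,3))$ the ratio bound holds only at the steps with $a_k\ne 0$; there one invokes the purely periodic corollary to Theorem~\ref{ratios} rather than the theorem itself, using that every period contains at least one nonzero partial quotient unless the sequence is $(\overline 0)$ or $(\overline 1)$.

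Once $\triangle(\overline{i_0,\ldots,i_l})$ is shown to be a single point $(1,\alpha,\beta)$, the arithmetic statement is immediate from the eigenvector Proposition of Section~\ref{periodicity}: $(1,\alpha,\beta)$ is the dominant eigenvector of $F_{i_0}\cdots F_{i_l}\in SL(3,\Z)$, whose characteristic polynomial is a monic integer cubic, so $\alpha$ and $\beta$ lie in a common number field of degree two or three. Finally, for the two excluded sequences one has $\triangle(\overline 0)=\bigcap_{n\ge 0}F_0^{\,n}(\triangle)$ and $\triangle(\overline 1)=\bigcap_{n\ge 0}F_1^{\,n}(\triangle)$; by Theorem~\ref{evs} this intersection is an edge exactly when $F_0^2$ (respectively $F_1^2$) has two eigenvectors in $\triangle$, and is a single point otherwise, and one records the outcome for each representative from the explicit matrices (for instance, for $(e,e,e)$ the sequence $(\overline 1)$ gives the lower edge of $\triangle$, while for $(e,(2\,3),e)$ the sequence $(\overline 0)$ gives the midpoint of the right edge).
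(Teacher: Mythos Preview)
Your proposal is correct and follows essentially the same route as the paper: the same reduction via the $(1\,3)$ basis change and the same partition-equivalence argument collapse the ten cases to the four representatives $(e,e,e)$, $(e,(2\,3),e)$, $(e,(2\,3),(2\,3))$, $(e,(1\,3\,2),e)$, and for each of these the paper computes $F_1^{a}F_0$, establishes the inductive inequalities you describe, and invokes Theorem~\ref{ratios} or its periodic corollary exactly as you outline (including the $F_0\leftrightarrow F_1$ swap and parity split for $(e,(2\,3),(2\,3))$ and the two- or three-step composition for $(e,(1\,3\,2),e)$). The only cosmetic difference is that the paper records the outcomes for $(\overline{0})$ and $(\overline{1})$ directly rather than via Theorem~\ref{evs}.
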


\section{Generalizations to Higher Dimensions}\label{generalization}

The method of construction for the 216 TRIP Maps extends to
create families of multidimensional continued fractions in any dimension.
We partition the $n$-dimensional simplex using two
$(n+1)\times (n+1)$ matrices and define a map in terms of these partition
matrices. Let $$\Sigma=\{(x_{1},\dots,x_{n})|0\leq x_{n}\leq\dots\leq x_{1}\leq1\}$$
be the $n$-dimensional simplex.

Define
\[
A_{1}=(a_{ij})\mbox{ where \ensuremath{a_{ij}=\begin{cases}
1 & \mbox{if }i=j\mbox{ or }i=n+1\mbox{ }\mbox{and }j=0\\
0 & \mbox{otherwise}
\end{cases}}}
\]

\[
=\left(\begin{array}{ccccc}
1 & 0 & \dots & 0 & 1\\
0 & \ddots & \ddots & \dots & 0\\
\vdots & \ddots & \ddots & \ddots & \vdots\\
0 & \dots & \ddots & \ddots & 0\\
0 & 0 & \dots & 0 & 1
\end{array}\right)
\]

and
\[
A_{0}=(a_{ij})\mbox{ where \ensuremath{a_{ij}=\begin{cases}
1 & \mbox{if }i+1=j\mbox{, }i=n\mbox{ }\mbox{and }j=0,\mbox{ or }i=j=n+1\\
0 & \mbox{otherwise}
\end{cases}}}
\]

\[
=\left(\begin{array}{ccccc}
0 & \dots & \dots & 0 & 1\\
1 & 0 & \dots & \dots & 0\\
0 & \ddots & \ddots & \vdots & \vdots\\
\vdots & \ddots & \ddots & 0 & 0\\
0 & \dots & 0 & 1 & 1
\end{array}\right).
\]

If $v_1, \ldots, v_n$ are column vectors in $\R^n$, we have 
\begin{eqnarray*}
(v_1, \ldots, v_n)A_0 &=& (v_2,v_3,\ldots, v_n, v_1 + v_n)\\
(v_1, \ldots, v_n)A_1 &=&  (v_1,v_2,\ldots, v_{n-1}, v_1 + v_n)
\end{eqnarray*}

These two matrices  will partition $\Sigma$ into two sub-simplices. The
two sub-simplices are constructed by partitioning $\Sigma$ with the
$(n-1)$-dimensional hyperplane containing the points $(1,1,0,\dots,0),$ $(1,1,1,0,\dots, 0),\dots,$ $(1,1,\dots,1,0)$
and $(2,1,\dots,1).$
As can be seen, these two matrices, when the dimension $n=3$, are  matrices $A_0$ and $A_1$ used to construct TRIP Maps.
%
%

As in Section \ref{intro216}, we introduce permutations to create a family of $(n+1)!^{3}$ multidimensional continued
fraction algorithms on the $n$-dimensional simplex.
\begin{defn}
For every $(\sigma,\tau_{0},\tau_{1})\in S_{n+1}^{3},$ define
\[
M_{0}=\sigma A_{0}\tau_{0}\mbox{ and }M_{1}=\sigma A_{1}\tau_{1}
\]

when $\sigma,$ $\tau_{0},$ and $\tau_{1}$ are column permutation
matrices.
\end{defn}
Now to construct the simplex map in the same we earlier constructed our TRIP maps. Let
\[
B_{n}=(b_{ij})\mbox{ where }b_{ij}=\begin{cases}
1 & \mbox{if }i\geq j\\
0 & \mbox{otherwise}
\end{cases}
,\]

define $\pi_{n}:\mathbb{R}^{n+1}\rightarrow\mathbb{R}^{n}$ where
\[
\pi(x_{1},\dots,x_{n+1})=\left(\frac{x_{2}}{x_{1}},\dots,\frac{x_{n+1}}{x_{1}}\right)
,\]

and define $\Sigma_k$ to be the simplex with vertices $\pi_n(v_1)$, $\pi_n(v_2)$, \dots, $\pi_n(v_n),$ where $$(v_1\ v_2\ \dots
v_n)=B_nM_1^kM_0.$$

\begin{defn}
The simplex function is given by
\[
S_{(\sigma,\tau_{0},\tau_{1})_{n}}(x_{1},\dots,x_{n})=\pi_{n}((1\ x_{1}\ x_{2}\dots x_{n})(B_{n}M_{0}^{-1}M_{1}^{-k}B_{n}^{-1})^{T})
\]

when $(x_1,\dots,x_n)\in\Sigma_k.$ The simplex sequence of an element $x_{1},\dots,x_{n}$ of $\Sigma$ is the sequence $\{a_i\}$ such that
\[
[S_{(\sigma,\tau_{0},\tau_{1})_{n}}]^i(x_{1},\dots,x_{n}) \in \Sigma_{a_i}.
\]
\end{defn}

\begin{ex}
The function $S_{((4\ 5),e,e)_{5}}(x_{1},x_{2},x_{3},x_{4})$ is given by
\[
\left(\frac{x_{2}}{x_{1}},\frac{x_{3}}{x_{1}},\frac{(-1)^{-k}\left(-2x_{4}+x_{3}+(-1)^{k}x_{3}\right)}{2x_{1}},-\frac{-4-2x_{4}+4x_{1}+(-1)^{k}(2x_{4}-x_{3})+x_{3}+2kx_{3}}{4x_{1}}\right).
\]

\end{ex}

There is a simple general formula for $S_{(e,e,e)_{n}}$, the $n$-dimensional analogue
of the triangle map.
\begin{thm}
\[
S_{(e,e,e)_{n}}=\left(\frac{x_{2}}{x_{1}},\frac{x_{3}}{x_{1}},\dots.\frac{x_{n-1}}{x_{1}},\frac{1-x-kx_{n-1}}{x_{1}}\right)
\]

where $k=\left\lfloor \frac{1-x_{1}}{x_{n-1}}\right\rfloor .$\end{thm}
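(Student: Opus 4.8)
The plan is to carry out, for general $n$, the same matrix computation that yields the triangle map in the Triangle Map example of Section~\ref{examples}. First I would specialize the definition of the simplex function to $\sigma=\tau_{0}=\tau_{1}=e$, so that $M_{0}=A_{0}$, $M_{1}=A_{1}$, and the map becomes $S_{(e,e,e)_n}(x_{1},\dots,x_{n})=\pi_{n}\big((1\ x_{1}\ \dots\ x_{n})(B_{n}A_{0}^{-1}A_{1}^{-k}B_{n}^{-1})^{T}\big)$ on $\Sigma_{k}$. The matrix $A_{1}$ is the identity plus a single off-diagonal $1$ --- visible both from its displayed form and from the column action $(v_{1},\dots,v_{n+1})A_{1}=(v_{1},\dots,v_{n},v_{1}+v_{n+1})$ --- so writing $A_{1}=I+E$ with $E^{2}=0$ gives $A_{1}^{-k}=I-kE$, which is how the integer $k$ enters, linearly. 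The inverse $A_{0}^{-1}$ is read off just as easily from $(v_{1},\dots,v_{n+1})A_{0}=(v_{2},\dots,v_{n+1},v_{1}+v_{n+1})$: it reverses the cyclic shift of the $v_{i}$ together with the ``carry'' $v_{1}+v_{n+1}$.

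Multiplying $A_{0}^{-1}A_{1}^{-k}$, conjugating by $B_{n}$, and transposing then produces a companion-type $(n+1)\times(n+1)$ matrix $N_{k}$ of exactly the same shape as the $3\times3$ matrix displayed in the triangle-map example: all but one of its columns are standard basis vectors implementing the coordinate shift, and the one remaining column encodes the linear form $1-x_{1}-kx_{n}$ (with $x_{n}$ the last input coordinate). Left-multiplying the row vector $(1\ x_{1}\ \dots\ x_{n})$ by $N_{k}$ therefore gives $(x_{1},\,x_{2},\,\dots,\,x_{n},\,1-x_{1}-kx_{n})$, and applying $\pi_{n}$ (division by the first coordinate) yields precisely the claimed formula for $S_{(e,e,e)_n}$.

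It remains to identify the subsimplex $\Sigma_{k}$ and to confirm that the exponent appearing there is $k=\lfloor(1-x_{1})/x_{n}\rfloor$. For this I would mimic the identification of $\triangle_{k}$ in Section~\ref{triangle}: by definition the vertices of $\Sigma_{k}$ are $\pi_{n}$ of the columns of $B_{n}A_{1}^{k}A_{0}$, and --- again using $A_{1}^{k}=I+kE$ and the column action of $A_{0}$ --- these work out to the points $(1,0,\dots,0),(1,1,0,\dots,0),\dots,(1,\dots,1,0)$ together with $(\tfrac{1}{k+1},\dots,\tfrac{1}{k+1})$ and $(\tfrac{1}{k+2},\dots,\tfrac{1}{k+2})$, which span exactly the region $\{(1,x_{1},\dots,x_{n})\in\Sigma:1-x_{1}-kx_{n}\ge 0>1-x_{1}-(k+1)x_{n}\}$, i.e.\ the set on which $k=\lfloor(1-x_{1})/x_{n}\rfloor$. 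Combined with the previous paragraph this proves the theorem; the cases $n=1$ and $n=2$ recover the Gauss map and the triangle map and serve as sanity checks.

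The computation is routine; the only real obstacle is bookkeeping --- performing the triangular conjugation by $B_{n}$ uniformly in $n$ while keeping the indices straight, and in particular making sure the linear form $1-x_{1}-kx_{n}$ lands in the last slot with the correct sign. One should also reconcile the convention for $B_{n}$ with the upper-triangular $B=(\vv_{1}\ \vv_{2}\ \vv_{3})$ of Section~\ref{triangle}, since the two amount to reversing the order of $\vv_{1},\dots,\vv_{n+1}$. A tidy way to sidestep most of this is to write down the candidate matrix $N_{k}$ directly and verify the single identity $A_{1}^{k}A_{0}\,(B_{n}^{-1}N_{k}^{T}B_{n})=I$; since $A_{1}^{k}A_{0}$ is sparse, this reduces to a short check.
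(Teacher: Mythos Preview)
Your proposal is correct and follows essentially the same route as the paper: specialize to $M_0=A_0$, $M_1=A_1$, compute $A_1^{-k}$ (your nilpotent trick $A_1=I+E$, $E^2=0$ is exactly what the paper's displayed $M_1^{k}$ amounts to), form $(B_nA_0^{-1}A_1^{-k}B_n^{-1})^{T}$, and read off the formula after multiplying by $(1,x_1,\dots,x_n)$ and applying $\pi_n$. You go beyond the paper in two useful ways: you actually identify the region $\Sigma_k$ as $\{1-x_1-kx_n\ge 0>1-x_1-(k+1)x_n\}$ and thereby justify the floor formula for $k$ (the paper simply asserts it), and you flag the lower-versus-upper triangular discrepancy between $B_n$ here and $B$ in Section~\ref{triangle}, proposing the clean single-identity check $A_1^{k}A_0\,(B_n^{-1}N_k^{T}B_n)=I$ as a way to bypass the bookkeeping --- both worthwhile additions.
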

\begin{proof}
Consider the matrices
\[
M_{0}=A_{0}=\left(\begin{array}{ccccc}
0 & \dots & \dots & 0 & 1\\
1 & 0 & \dots & \dots & 0\\
0 & \ddots & \ddots & \vdots & \vdots\\
\vdots & \ddots & \ddots & 0 & 0\\
0 & \dots & 0 & 1 & 1
\end{array}\right)
\]

and
\[
M_{1}=A_{1}=\left(\begin{array}{ccccc}
1 & 0 & \dots & 0 & 1\\
0 & \ddots & \ddots & \dots & 0\\
\vdots & \ddots & \ddots & \ddots & \vdots\\
0 & \dots & \ddots & \ddots & 0\\
0 & 0 & \dots & 0 & 1
\end{array}\right).
\]

Note that
\[
M_{1}^{k}=\left(\begin{array}{ccccc}
1 & 0 & \dots & 0 & k\\
0 & \ddots & \ddots & \dots & 0\\
\vdots & \ddots & \ddots & \ddots & \vdots\\
0 & \dots & \ddots & \ddots & 0\\
0 & 0 & \dots & 0 & 1
\end{array}\right),
\]

so
\[
M_{0}^{-1}M_{1}^{-k}=\left(\begin{array}{cccc}
0 & \dots & k & 1+k\\
1 & 0 & \dots & \vdots\\
\vdots & \ddots & \dots & \vdots\\
0 & \dots & 1 & 1
\end{array}\right)^{-1}=\left(\begin{array}{cccc}
0 & 1 & \dots & 0\\
\vdots & 0 & 1 & \vdots\\
-1 & \vdots & \vdots & 1+k\\
1 & 0 & \dots & -k
\end{array}\right).
\]

Thus,
\[
(B_{n}M_{0}^{-1}M_{1}^{-k}B_{n}^{-1})^{T}=\left(\begin{array}{cccc}
0 & \dots & \dots & 1\\
1 & 0 & \dots & -1\\
\vdots & \ddots & 0 & \vdots\\
0 & \dots & 1 & -k
\end{array}\right),
\]

and
\[
S_{(e,e,e)_{n}}=\left(\frac{x_{2}}{x_{1}},\frac{x_{3}}{x_{1}},\dots.\frac{x_{n-1}}{x_{1}},\frac{1-x-kx_{n-1}}{x_{1}}\right).
\]

\end{proof}
We can also prove that a certain class of algebraic numbers has a
purely periodic sequence of period one under these maps.
\begin{thm}
The points of the form $(\alpha,\dots,\alpha^{n-1})$, where $\alpha^{n}+k\alpha^{n-1}+\alpha-1=0,$ are purely periodic of period one under \textup{\emph{$S_{(e,e,e)_{n}}$.
Specifically, $(\alpha,\dots,\alpha^{n-1})$
has}}\textup{ }\textup{\emph{simplex sequence $(k,k,\dots).$}}
\begin{proof}
\[
S_{(e,e,e)_{n}}(\alpha,\dots,\alpha^{n-1})=\left(\frac{\alpha^{2}}{\alpha},\dots,\frac{1-\alpha-k\alpha^{n-1}}{\alpha}\right)
\]
\[
=\left(\frac{\alpha^{2}}{\alpha},\frac{\alpha^{3}}{\alpha},\dots,\frac{\alpha^{n}}{\alpha}\right)
\]
\[
=(\alpha,\dots,\alpha^{n-1}). \qedhere
\]
\end{proof}
\end{thm}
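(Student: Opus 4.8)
The plan is to verify directly, using the explicit formula for $S_{(e,e,e)_n}$ established in the previous theorem, that each point $(\alpha,\dots,\alpha^{n-1})$ of the stated form is a fixed point lying in the subsimplex indexed by exactly the integer $k$ occurring in the polynomial. Note first that such an $\alpha$ exists in $(0,1)$: the polynomial $t^n+kt^{n-1}+t-1$ is negative at $t=0$ and positive at $t=1$, so the Intermediate Value Theorem supplies a root $\alpha\in(0,1)$. For such $\alpha$ we have $0<\alpha^{n-1}<\cdots<\alpha^2<\alpha<1$, so $(\alpha,\dots,\alpha^{n-1})\in\Sigma$ and the map is defined there; I will take $0<\alpha<1$ as a (mild, implicit) standing hypothesis.

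Next I would pin down which subsimplex $\Sigma_j$ contains the point. Since $\Sigma_j$ is precisely the set on which $S_{(e,e,e)_n}$ uses the exponent $j=\lfloor (1-x_1)/x_{n-1}\rfloor$, I compute $\lfloor (1-\alpha)/\alpha^{n-1}\rfloor$. Rearranging $\alpha^n+k\alpha^{n-1}+\alpha-1=0$ gives $1-\alpha=\alpha^n+k\alpha^{n-1}$, whence $(1-\alpha)/\alpha^{n-1}=\alpha+k$; since $0<\alpha<1$, the floor of this is $k$. So $(\alpha,\dots,\alpha^{n-1})\in\Sigma_k$, and the relevant branch of the map is the one with this value of $k$.

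Now I would substitute into the formula. The first $n-2$ coordinates of $S_{(e,e,e)_n}(\alpha,\dots,\alpha^{n-1})$ are $\alpha^2/\alpha,\dots,\alpha^{n-1}/\alpha$, i.e.\ $\alpha,\dots,\alpha^{n-2}$, and the final coordinate is $(1-\alpha-k\alpha^{n-1})/\alpha$. But $1-\alpha-k\alpha^{n-1}=\alpha^n$ by the defining equation, so this coordinate equals $\alpha^n/\alpha=\alpha^{n-1}$. Hence $S_{(e,e,e)_n}(\alpha,\dots,\alpha^{n-1})=(\alpha,\dots,\alpha^{n-1})$; the point is fixed, it remains in $\Sigma_k$ at every iterate, and so its simplex sequence is $(k,k,\dots)$, purely periodic of period one.

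The algebra is entirely routine; the only step that requires any thought — and the sole place a hypothesis is really used — is the index-matching, i.e.\ checking that the integer $k$ appearing in the polynomial is the same $k$ that the floor function in $S_{(e,e,e)_n}$ selects. This is exactly what the computation $(1-\alpha)/\alpha^{n-1}=\alpha+k$, combined with $0<\alpha<1$, delivers, and for that reason it is worth recording the restriction $0<\alpha<1$ explicitly in the statement.
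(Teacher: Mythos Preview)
Your proof is correct and follows the same approach as the paper's: direct substitution into the explicit formula for $S_{(e,e,e)_n}$ and simplification using the defining polynomial. The paper's proof is terser and simply writes out the three-line computation, implicitly taking for granted that the point lies in $\Sigma_k$; your version adds the verification that $\lfloor(1-\alpha)/\alpha^{n-1}\rfloor=k$ (and the existence of $\alpha\in(0,1)$), which is a genuine gap in the paper's argument and worth making explicit.
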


Unfortunately, the  algorithms produced by this method  are not necessarily unique, as can bee seen in the following.

\begin{Proposition}
The algorithms on the tetrahedron generated by the permutations $(e,e,e)$
and $((2\ 3),(1\ 2),(2\ 3))$ are the same.\end{Proposition}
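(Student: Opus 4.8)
The plan is to show that the two triples of permutations produce literally the same pair of partition matrices $M_0, M_1$; once that identity is established, the sub-simplices $\Sigma_k$, the formula defining the simplex function, and hence the entire algorithm are identical, so nothing further is needed.

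First I would specialize the construction of this section to $n=3$: the permutations then lie in $S_4$, and the base matrices are the $4\times 4$ matrices $A_0, A_1$ acting by $(v_1,v_2,v_3,v_4)A_0 = (v_2,v_3,v_4,v_1+v_4)$ and $(v_1,v_2,v_3,v_4)A_1 = (v_1,v_2,v_3,v_1+v_4)$. For $(e,e,e)$ we have $M_0 = A_0$ and $M_1 = A_1$; for $((2\,3),(1\,2),(2\,3))$ we have $M_0 = (2\,3)\,A_0\,(1\,2)$ and $M_1 = (2\,3)\,A_1\,(2\,3)$, where, following the definition, left multiplication by $\sigma$ permutes the input vertex list and right multiplication by $\tau_i$ permutes the output list.

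The main step is then to evaluate these two products, which I would do by tracking vertex slots (multiplying out the two $4\times 4$ permutation-matrix products would do equally well). For $M_0$: $(2\,3)$ sends $(v_1,v_2,v_3,v_4)$ to $(v_1,v_3,v_2,v_4)$; then $A_0$ gives $(v_3,v_2,v_4,v_1+v_4)$; then the output permutation $(1\,2)$ gives $(v_2,v_3,v_4,v_1+v_4)$, which is exactly $(v_1,v_2,v_3,v_4)A_0$, so $M_0 = A_0$. For $M_1$: $(2\,3)$ sends $(v_1,v_2,v_3,v_4)$ to $(v_1,v_3,v_2,v_4)$; then $A_1$, which fixes the first three slots and replaces the last by the sum of the first and the last, gives $(v_1,v_3,v_2,v_1+v_4)$; then $(2\,3)$ restores $(v_1,v_2,v_3,v_1+v_4) = (v_1,v_2,v_3,v_4)A_1$, so $M_1 = A_1$. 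The reason this happens is structural: $A_0$ carries input slots $2,3,4$ onto output slots $1,2,3$ while using slots $1$ and $4$ only for the new vertex, so transposing input slots $2,3$ is exactly undone by transposing output slots $1,2$; and $A_1$ fixes slots $1,2,3$, so conjugating by the transposition $(2\,3)$ of two of those slots has no effect.

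Finally, since $M_0 = A_0$ and $M_1 = A_1$ for both triples, the simplices $\Sigma_k$, built from $\pi_3$ of the columns of $B_3 M_1^k M_0$, coincide, and the functions $S_{(\sigma,\tau_0,\tau_1)_3}(x_1,x_2,x_3) = \pi_3((1,\,x_1,\,x_2,\,x_3)(B_3 M_0^{-1}M_1^{-k}B_3^{-1})^T)$ agree on every piece; hence the two algorithms are the same. I do not expect a genuine obstacle here: the only subtlety is keeping the permutation-matrix conventions consistent (which side permutes rows and which permutes columns), after which the verification is finite and mechanical. It is worth noting that this slot-tracking principle is exactly what makes the parametrization $(\sigma,\tau_0,\tau_1)\mapsto S_{(\sigma,\tau_0,\tau_1)_n}$ of the $(n+1)!^3$ tuples highly non-injective, which is the phenomenon the Proposition illustrates.
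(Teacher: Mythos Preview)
Your proposal is correct and follows essentially the same approach as the paper: both verify by tracking vertex slots that $(v_1,v_2,v_3,v_4)(2\,3)A_0(1\,2)=(v_1,v_2,v_3,v_4)A_0$ and $(v_1,v_2,v_3,v_4)(2\,3)A_1(2\,3)=(v_1,v_2,v_3,v_4)A_1$, with identical intermediate steps. Your added structural explanation and the observation about non-injectivity of the parametrization are nice touches, but the core argument is the same as the paper's.
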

\begin{proof}
Let $v_1, v_2, v_3, v_4$ be the vertices of our simplex.  Then for $(e,e,e),$ we have 
$$(v_1, v_2, v_3, v_4)A_0 = (v_2,v_3,v_4, v_1+v_4)$$
and 
$$(v_1, v_2, v_3, v_4)A_1 = (v_1,v_2,v_3, v_1+v_4).$$
For $((2\ 3),(1\ 2),(2\ 3)),$ set $F_{0}=   (2\ 3)A_{0}(1\ 2)$ and $F_{1}=  (2\ 3)A_{1}(2\ 3)$.  We have 
\begin{eqnarray*}
(v_1, v_2, v_3, v_4)F_0  &=&  (v_1, v_2, v_3, v_4) (2\ 3)A_{0}(1\ 2) \\
&=& (v_1, v_3, v_2, v_4) A_{0}(1\ 2) \\
&=& (v_3, v_2, v_4, v_1+ v_4) (1\ 2) \\
&=& (v_2, v_3, v_4, v_1+v_4) \\
&=& (v_1, v_2, v_3, v_4)A_0  \\
(v_1, v_2, v_3, v_4)F_1  &=&  (v_1, v_2, v_3, v_4)  (2\ 3)A_{1}(2\ 3)  \\
&=& (v_1, v_3, v_2, v_4)  A_{1}(2\ 3)  \\
&=&  (v_1, v_3, v_2, v_1+ v_4) (2\ 3) \\
&=& (v_1, v_2, v_3, v_1+ v_4)\\
&=& (v_1, v_2, v_3, v_4)A_1.
\end{eqnarray*}

Thus the two algorithms are the same.
\end{proof}

By methods similar to the one used in the previous proposition we can derive an expression for the total number of unique multidimensional continued fraction algorithms that our method generates in each dimension.

\begin{lemma}
Let $\sigma\in S_{n}$ leave the first and last columns of a given
$n\times n$ matrix fixed (and hence $\sigma$ sends $1$ to $1$ and $n$ to $n$). . Then there exist $\tau_{0},\tau_{1}\in S_{n}$
such that $\sigma A_{0}\tau_{0}=A_{0}$ and $\sigma A_{1}\tau_{1}=A_{1}.$\end{lemma}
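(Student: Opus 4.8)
## Proof Proposal

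The plan is to show that the hypothesis on $\sigma$ --- namely that $\sigma$ fixes the first and last standard basis vectors, so $\sigma(1) = 1$ and $\sigma(n) = n$ as a permutation of columns --- lets us ``absorb'' $\sigma$ by a right multiplication by suitable permutation matrices. The key observation is to read off, from the explicit forms of $A_0$ and $A_1$, exactly how left multiplication by $\sigma$ reorders the columns, and then check that this reordering can be undone on the right.

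First I would recall the action of $A_0$ and $A_1$ on column vectors, which is stated in the excerpt: if $v_1, \ldots, v_n$ are the columns of a matrix, then $(v_1, \ldots, v_n)A_0 = (v_2, v_3, \ldots, v_n, v_1 + v_n)$ and $(v_1, \ldots, v_n)A_1 = (v_1, v_2, \ldots, v_{n-1}, v_1 + v_n)$. Now left multiplication by $\sigma$ permutes the \emph{rows} of $A_0$ or $A_1$; equivalently, since $\sigma A_i$ is again obtained from the ``generic'' matrix of column vectors $(e_1, \ldots, e_n)$ by the same recipe followed by applying $\sigma$ to each resulting column, I would instead track things through the column formulas. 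Concretely, $\sigma A_0$ sends $(v_1, \ldots, v_n)$ first to $(v_2, \ldots, v_n, v_1 + v_n)$ and then applies $\sigma$; but because $\sigma$ fixes $e_1$ and $e_n$, one finds $\sigma(v_1 + v_n)$ behaves compatibly, and the net effect of $\sigma A_0$ differs from $A_0$ only by a permutation of the output columns --- call it $\pi_0$. Setting $\tau_0 = \pi_0^{-1}$ then gives $\sigma A_0 \tau_0 = A_0$. The same argument with $A_1$ produces $\tau_1$.

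The cleanest way to organize the verification is probably at the level of matrices rather than abstract column operations: write $\sigma$ as a permutation matrix $P_\sigma$, note the block structure forced by $\sigma(1)=1$, $\sigma(n)=n$ (so $P_\sigma$ has a $1$ in position $(1,1)$ and in position $(n,n)$, and permutes the middle indices $2, \ldots, n-1$ among themselves --- here I would double-check whether the hypothesis forces $\sigma$ to fix \emph{every} index or only $1$ and $n$; the statement says ``leave the first and last columns fixed,'' and the parenthetical ``hence $\sigma$ sends $1$ to $1$ and $n$ to $n$'' suggests $\sigma$ need only fix those two, acting freely in between). Then $P_\sigma A_0$ and $P_\sigma A_1$ are computed directly, and I would exhibit $\tau_0, \tau_1$ as the explicit permutations of $\{1, \ldots, n\}$ that restore the original column order; verifying $\sigma A_0 \tau_0 = A_0$ and $\sigma A_1 \tau_1 = A_1$ is then a finite matrix identity.

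The main obstacle I anticipate is the last column of $A_0$ and $A_1$, which is $v_1 + v_n$ rather than a single basis vector: I need to confirm that applying $\sigma$ to this sum, given only that $\sigma$ fixes $e_1$ and $e_n$, yields exactly $e_1 + e_n$ again (it does, since $\sigma(e_1 + e_n) = \sigma(e_1) + \sigma(e_n) = e_1 + e_n$), and that this column therefore does not need to be moved by $\tau_0$ or $\tau_1$ --- only the ``pure'' columns coming from $v_2, \ldots, v_n$ (or $v_1, \ldots, v_{n-1}$) get permuted, and those permutations are determined by $\sigma$ in a way that can be inverted on the right. Once that is pinned down, the rest is bookkeeping, and I would simply record the resulting $\tau_0$ and $\tau_1$ in terms of $\sigma$.
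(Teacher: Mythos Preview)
Your proposal is correct and follows essentially the same route as the paper: both arguments reduce to the observation that, since $\sigma$ fixes $e_1$ and $e_n$, left multiplication by $\sigma$ fixes the special column $e_1+e_n$ of $A_0$ and $A_1$ and merely permutes the remaining columns among themselves, so a suitable right permutation undoes the effect. The paper's write-up is simply terser, recording the explicit choices $\tau_1=\sigma^{-1}$ and $\tau_0=\overline{\sigma}^{-1}$ (where $\overline{\sigma}$ is the shift of $\sigma$ sending $i\mapsto j$ when $\sigma(i+1)=j+1$) rather than leaving them as ``bookkeeping.''
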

\begin{proof}
We know that $(v_1, \ldots , v_{n-1}, v_n)A_1= (v_1, \ldots , v_{n-1}, v_1+ v_n)$ and hence that $A_1$ only effects the last term.  Thus
by inspection we see for any vectors $v_1, \ldots , v_n$ that 
 $$(v_1, \ldots , v_n)A_1 = (v_1, \ldots , v_n)\sigma A_1\sigma^{-1}.$$
 
 Since we are assuming that $\sigma$ sends $1$ to $1$ and $n$ to $n$, let $\overline{\sigma}\in S_n$ be the permutation that, for $1\leq i \leq n-1$,  sends  $i$ to $j$ if $\sigma$ sends $i+1$ to $j+1$.  Then by inspection we have
 $$(v_1, \ldots , v_n)A_0 = (v_1, \ldots , v_n)\sigma A_0 \overline{\sigma}^{-1}.$$










\end{proof}
\begin{thm}
The number of unique algorithms on the $n$-dimensional simplex is
at most $(n+1)!^{3}\frac{1}{(n-1)!}.$\end{thm}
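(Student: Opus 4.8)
The plan is to bound the number of distinct algorithms by bounding the number of distinct ordered pairs of partition matrices $(M_0,M_1)=(\sigma A_0\tau_0,\ \sigma A_1\tau_1)$, since the simplex function $S_{(\sigma,\tau_0,\tau_1)_n}$ --- and with it the entire algorithm, including the pieces $\Sigma_k$ --- is written purely in terms of $M_0$, $M_1$, $B_n$, and $\pi_n$; thus equal pairs $(M_0,M_1)$ yield the same algorithm. It therefore suffices to show that the fibre of the assignment $(\sigma,\tau_0,\tau_1)\mapsto(M_0,M_1)$ through any triple contains at least $(n-1)!$ triples, and the bound follows immediately by dividing $(n+1)!^3$ by $(n-1)!$.

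First I would fix the subgroup $H\le S_{n+1}$ of permutations that fix the first and last positions; since $H$ permutes the remaining $n-1$ positions arbitrarily, $|H|=(n-1)!$. For each $\rho\in H$, apply the preceding lemma (whose hypothesis is precisely that $\rho$ fixes the first and last columns) to obtain permutations $\tau_0^{(\rho)},\tau_1^{(\rho)}\in S_{n+1}$ with $\rho A_0\tau_0^{(\rho)}=A_0$ and $\rho A_1\tau_1^{(\rho)}=A_1$; fix one such pair once and for all for every $\rho\in H$. Then, given an arbitrary triple $t=(\sigma,\tau_0,\tau_1)$, set $t_\rho:=(\sigma\rho,\ \tau_0^{(\rho)}\tau_0,\ \tau_1^{(\rho)}\tau_1)$ and compute
\[
(\sigma\rho)A_0(\tau_0^{(\rho)}\tau_0)=\sigma\bigl(\rho A_0\tau_0^{(\rho)}\bigr)\tau_0=\sigma A_0\tau_0,\qquad (\sigma\rho)A_1(\tau_1^{(\rho)}\tau_1)=\sigma A_1\tau_1,
\]
so $t_\rho$ produces the very same pair $(M_0,M_1)$, and hence the same algorithm, as $t$.

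Now the map $\rho\mapsto t_\rho$ is injective, because the first coordinate $\sigma\rho$ of $t_\rho$ recovers $\rho$ once $\sigma$ is fixed. Hence each fibre of $(\sigma,\tau_0,\tau_1)\mapsto(M_0,M_1)$ has at least $|H|=(n-1)!$ elements, so the $(n+1)!^3$ triples fall into at most $(n+1)!^3/(n-1)!$ fibres. Since the number of distinct algorithms is no larger than the number of distinct pairs $(M_0,M_1)$, which is the number of these fibres, the count of distinct algorithms on the $n$-dimensional simplex is at most $(n+1)!^3/(n-1)!$.

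I do not expect a genuine obstacle: this is a fibre-counting argument resting entirely on the preceding lemma. The only points demanding care are the left/right bookkeeping in the displayed identities --- one must be sure the $\rho$-factor is exactly the one the lemma is permitted to absorb into $A_0$ (resp.\ $A_1$) --- and the remark that the $t_\rho$ are genuinely distinct, which is what legitimizes the lower bound $(n-1)!$ on fibre size. As a sanity check, the proposition identifying the tetrahedron algorithms of $(e,e,e)$ and $((2\ 3),(1\ 2),(2\ 3))$ is precisely the instance $n=3$, $\rho=(2\ 3)$, $\tau_0^{(\rho)}=(1\ 2)$, $\tau_1^{(\rho)}=(2\ 3)$ of this construction, and there $(n-1)!=2$ accounts for that coincidence.
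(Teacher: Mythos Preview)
Your proposal is correct and follows essentially the same approach as the paper: both use the preceding lemma to show that for each $\rho\in H$ (the stabilizer of the first and last positions) the triple $(\sigma\rho,\tau_0^{(\rho)}\tau_0,\tau_1^{(\rho)}\tau_1)$ yields the same $(M_0,M_1)$, and both observe that distinct $\rho$ give distinct triples via the first coordinate. Your write-up is in fact more careful than the paper's---you explicitly justify why the algorithm depends only on $(M_0,M_1)$ and you frame the count cleanly as a fibre-size lower bound---but the underlying argument is identical.
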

\begin{proof}
Let $(\sigma,\tau_{0},\tau_{1})\in S_{n+1}^{3}.$ Let $\sigma'\in S_{n+1}$
leave both the first and last columns of a given $n\times n$ matrix
fixed. Note that there are $(n-1)!$ such matrices. Then, by the lemma,
there exist $\tau_{0}'$ and $\tau_{1}'$ such that $\sigma'A_{0}\tau_{0}'=A_{0}$
and $\sigma'A_{1}\tau_{1}'=A_{1}.$ Hence, $(\sigma,\tau_{0},\tau_{1})$
and $(\sigma\sigma',\tau_{0}'\tau_{0},\tau_{1}'\tau_{1})$ generate
the same multidimensional continued fraction in $(n-1)!-1$ cases
since these triples are not equal unless $\sigma'=e$. Therefore,
because $|S_{n+1}^{3}|=(n+1)!^{3},$ the number of unique algorithms
on the $n$-dimensional simplex is at most $(n+1)!^{3}\frac{1}{(n-1)!}.$ \end{proof}

\section{Conclusion}

Motivated by the Hermite problem, we have defined a family of Multidimensional Continued Fractions.  These 216 TRIP Maps include the triangle map and the M\"{o}nkemeyer Map. For each, we have a natural way of assigning a TRIP sequence to a pair of numbers -- the analog of the continued fraction expansion of a number.

By considering combinations of the TRIP Maps, we can describe many previously studied Multidimensional Continued Fractions algorithms.  Thus, we have a common language, mostly consisting of linear algebra, to describe many of the attempts to solve the Hermite problem.  The hope is that this language will facilitate further study of general properties of these algorithms.  In section \ref{uniqueness} of this paper, we have begun this process by characterizing an important property of the TRIP Maps: when their corresponding triangle sequences specify unique points. In \cite{SMALL-Uniqueness}, a countable family of TRIP maps is constructed under which infinitely many pairs in every cubic number field have a purely periodic triangle sequence.

In addition to the direct applications to the Hermite problem, we have worked on several projects concerning the generalization of certain properties of continued fractions.  In \cite{SMALL-TRIP-Stern}, we generalize the connection between continued fractions and the Stern diatomic sequence.  Specifically, we construct an analogous sequence for each of the 216 TRIP Maps, and then explore the structure therein.

In \cite{SMALL-TRIP-Pell} , we generalize the connection between continued fractions and the Pell equation.  Recall that continued fractions provide solutions to the standard Pell equation, and give insight into the units of a related number field.  Given any of our TRIP Maps, we give a method for constructing a ``Pell analog" equation.  Then we show that these Pell analogues share many of the characteristics of the original Pell equation, including providing solutions and producing units in a number field.

The family of 216 multidimensional continued fractions that presented in this paper suggests a variety of questions. Does one of these maps or some composition of these maps solve the Hermite problem? If not, what classes of cubic irrationals will have a periodic triangle sequence under various compositions of maps? As shown in  \cite{SchweigerF08} , the original triangle map is  ergodic. It is well known that the M\"{o}nkemeyer map is ergodic \cite{SchweigerF00}.  What are the dynamics of the other maps in the family of TRIP Maps?  Jensen \cite{Jensen12} has shown that some of the TRIP maps are indeed ergodic.  What about the rest?  Finally, ordinary continued fraction expansions exist for many real-valued functions. Can we find the triangle tree sequences of interesting functions from the triangle $\Delta$ to itself?

\end{document}